\newcommand{\kervaire}{\mathrm{kerv}}
\definecolor{darkblue}{rgb}{0,0,0.6}
\newtheorem*{rep@theorem}{\rep@title}
\newcommand{\newreptheorem}[2]{%
	\newenvironment{rep#1}[1]{%
		\def\rep@title{#2 \ref{##1}}%
		\begin{rep@theorem}}%
		{\end{rep@theorem}}}
\newtheorem{proposition}{Proposition}[section]
\newtheorem{theorem}[proposition]{Theorem}
\newtheorem{corollary}[proposition]{Corollary}
\newtheorem{lemma}[proposition]{Lemma}
\newtheorem{thmx}{Theorem}
\theoremstyle{definition}
\newtheorem{definition}[proposition]{Definition}
\newtheorem{question}[proposition]{Question}
\theoremstyle{remark}
\newtheorem{remark}[proposition]{Remark}
\newtheorem*{remark*}{Remark}
\numberwithin{equation}{section}
\DeclareMathOperator{\Diff}{Diff}
\newcommand{\sign}{\operatorname{sign}}
\DeclareMathOperator{\spin}{Spin}
\newcommand{\wh}{\widehat}
\DeclareMathOperator{\tr}{tr}
\newcommand{\N}{\mathbb{N}}
\newcommand{\R}{\mathbb{R}}
\newcommand{\Z}{\mathbb{Z}}
\newcommand{\FF}{\mathbb{F}}
\newcommand{\M}{\mathcal{M}}
\newcommand{\im}{\operatorname{Im}}
\newcommand{\id}{{\operatorname{id}}}
\DeclareMathOperator{\ttop}{top}
\newcommand{\ol}{\overline}
\newcommand{\wt}{\widetilde}
\newcommand{\ks}{\operatorname{ks}}
\newcommand{\CP}{\mathbb{CP}}
\DeclareMathOperator{\Sq}{Sq}
\DeclareMathOperator{\Out}{Out}
\DeclareMathOperator{\Aut}{Aut}
\DeclareMathOperator{\Hom}{Hom}
\DeclareMathOperator{\ev}{ev}
\DeclareMathOperator{\pr}{pr}
\newcommand{\Top}{\mathrm{Top}}
\newcommand{\diff}{\mathrm{diff}}
\newcommand{\Ext}{\mathrm{Ext}}
\newcommand{\G}{\mathrm{G}}
\newcommand{\BSTop}{\mathrm{BSTop}}
\newcommand{\BSO}{\mathrm{BSO}}
\newcommand{\topspin}{\mathrm{TopSpin}}
\newcommand{\cN}{\mathcal{N}}
\newcommand{\ter}{\mathrm{ter}}
\newcommand{\pri}{\mathrm{pri}}
\renewcommand{\sec}{\mathrm{sec}}
\DeclareMathOperator{\st}{st}
\newcommand{\homeo}{\cong_{\text{\normalfont top}}}
\DeclareMathOperator{\cb}{cb}
\newenvironment{clist}[1]
{\begin{enumerate}[\normalfont #1]}
{\end{enumerate}}
\subjclass[2020]{Primary 57K40, 57N65; Secondary 57Q10, 57R67, 19J25.}
\begin{document}

\title{Stable equivalence relations on 4-manifolds}
 
\author{Daniel Kasprowski}
\address{School of Mathematical Sciences, University of Southampton, Southampton SO17 1BJ, UK}
\email{d.kasprowski@soton.ac.uk}
	
\author{John Nicholson}
\address{School of Mathematics and Statistics, University of Glasgow, U.K.}
\email{john.nicholson@glasgow.ac.uk}
	
\author{Simona Vesel\'a}
\address{Universit\"at Bonn, Regina-Pacis-Weg 3, Bonn, Germany}
\email{vesela@math.uni-bonn.de }
	
\begin{abstract}
Kreck's modified surgery gives an approach to classifying smooth $2n$-manifolds up to stable diffeomorphism, i.e. up to connected sum with copies of $S^n \times S^n$. In dimension 4, we use a combination of modified and classical surgery to study various stable equivalence relations which we compare to stable diffeomorphism. Most importantly, we consider homotopy equivalence up to stabilisation with copies of $S^2 \times S^2$.

As an application, we show that closed oriented homotopy equivalent 4-manifolds with abelian fundamental group are stably diffeomorphic. 
We give analogues of the cancellation theorems of Hambleton--Kreck for homotopy equivalence up to stabilisations. 
Finally, we give a complete algebraic obstruction to the existence of closed smooth 4-manifolds which are homotopy equivalent but not simple homotopy equivalent up to connected sum with $S^2 \times S^2$.
\end{abstract}

\maketitle

\vspace{-8mm}
 
\section{Introduction}

Surgery theory, as developed by Browder,  Novikov, Sullivan, Wall and others, gives a method for determining when two closed $n$-manifolds which are (simple) homotopy equivalent are actually diffeomorphic. These methods work well when $n \ge 5$. For $n=4$, they break down in the smooth category by Donaldson \cite{Do87} but work in the topological category over \emph{good} fundamental groups by Freedman \cites{Freedman82,Freedman-Quinn}.
Kreck's modified surgery \cite{surgeryandduality} is a method to classify manifolds up to the weaker notion of stable diffeomorphism. This applies in dimension 4, where two closed smooth 4-manifolds $M$, $M'$ are \emph{stably diffeomorphic} if there exist $k,k' \ge 0$ and a diffeomorphism 
\[ M\#k(S^2\times S^2)\cong M'\#k'(S^2\times S^2).\]

The aim of this article is to introduce a wider range of stable equivalence relations on 4-manifolds which can be studied using modified surgery. 
By \cite[Theorem C]{surgeryandduality}, two closed oriented smooth 4-manifolds $M, M'$ are stably diffeomorphic if and only if they have the same normal $1$-type $\xi\colon B\to \BSO$ and they admit normal $1$-smoothings $\wt \nu_{M}$, $\wt \nu_{M'}$ such that $(M,\wt \nu_{M})$, $(M',\wt \nu_{M'})$ are equal in the group $\Omega_4(\xi)$ of bordisms over $\xi$ (see \cref{sec:background}).
Teichner \cite[Theorem~3.1.1]{teichnerthesis} constructed a spectral sequence converging to $\Omega_4(\xi)$ which, in the case when $M$ is almost spin, i.e. its universal cover $\wt M$ is spin, has $E^2$-term $E^2_{p,q}=H_p(\pi;\Omega^{\spin}_q)$ where $\pi = \pi_1(M)$.
It induces a filtration
\[16\Z=H_0(\pi;\Omega_4^{\spin})=F_{0,4}\leq F_{2,2}\leq F_{3,1}\leq F_{4,0}=\Omega_4(\xi),\]
where there is no $F_{1,3}$-term since $\Omega_3^{\spin}=0$.
For a subgroup $A\leq \Omega_4(\xi)$ we say that $M, M'$ are \textit{$\xi$-bordant mod $A$} if there exist normal $1$-smoothings $\wt \nu_{M}$, $\wt \nu_{M'}$ such that $[(M,\wt \nu_{M})]-[(M',\wt \nu_{M'})] \in A$. 

Our main result establishes a correspondence between geometrically defined stable equivalence relations and $\xi$-bordism mod $A$. 

\begin{thmx} \label{thmx:main-table}
Let $M$, $M'$ be closed, oriented, almost spin, smooth $4$-manifolds with normal $1$-type $\xi=\xi(\pi,w)$. 
 For each subgroup $A \le \Omega_4(\xi)$ listed below, the manifolds $M$ and $M'$ are $\xi$-bordant mod $A$ if and only if they are related by the geometric equivalence relation on the right.

\bgroup
 \def\x{3.6}
 \def\y{0}
 \vspace{-\x mm}
\def\arraystretch{1.2}
\begin{figure}[h] \label{table:main}
\begin{center}
\setlength{\tabcolsep}{4pt}
\begin{tabular}{rlcp{9.4cm}}
{\normalfont(i)}  & \, $0$ & $\leftrightarrow$& Stable diffeomorphism\\
{\normalfont(ii)}  & \, $F_{0,4}$ & $\leftrightarrow$& There exist simply connected spin $4$-manifolds $L,L'$ \hspace{10mm} such that $M\#L\cong M'\# L'$ \\
{\normalfont(iii)} & \, $[\ker(\kappa_2^s)\cap\ker(w\frown-)]$ & $\leftrightarrow$& Simple homotopy equivalence up to stabilisations by $S^2 \times S^2$ \\
{\normalfont(iv)} & \, $[\ker(\kappa_2^h)\cap\ker(w\frown-)]$ & $\leftrightarrow$& Homotopy equivalence up to stabilisations by $S^2 \times S^2$ \\
{\normalfont(v)} & \, $F_{2,2}$ & $\leftrightarrow$& There exist $k,k'\ge 0$ and a $2$-connected degree one normal map $f\colon M\#k(S^2\times S^2)\to M' \# k' (S^2 \times S^2)$\\
{\normalfont(vi)} & \, $F_{3,1}$ & $\leftrightarrow$& There exist simply connected $4$-manifolds $K,K'$ such \hspace{10mm} that $M\#K\cong M'\# K'$. \\
\end{tabular}
\end{center}
\vspace{-\x mm}
\end{figure}
\egroup  
\end{thmx}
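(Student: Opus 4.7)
The plan is to take Kreck's stable diffeomorphism theorem as the foundation, which is row (i), and handle the remaining rows by matching each named geometric equivalence with its subgroup $A\leq\Omega_4(\xi)$. In every row, the direction ``geometric equivalence implies $\xi$-bordant mod $A$'' is the routine one: one checks that the specified move changes the $\xi$-bordism class only by an element of $A$. The content is the converse.

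For (ii) and (vi), connect-summing with a simply connected $4$-manifold $L$ changes the $\xi$-bordism class by the image of $[L]$ under a natural map from a simply connected bordism group. When $L$ is spin, this image lies in $E^2_{0,4}=F_{0,4}$; by Rokhlin the latter equals $16\Z$, exhausted by multiples of the $K3$ surface, giving (ii). For general simply connected $L$ (allowing connected sums with $\CP^2$ that may alter the $1$-type), the image lies in $F_{3,1}$ since a simply connected manifold contributes no class in $H_4(\pi;\Z)$ for nontrivial $\pi$; a realisation argument using connected sums with $\pm\CP^2$ fills out all of $F_{3,1}$, yielding (vi).

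For (v), the mapping cylinder of a $2$-connected degree one normal map $f\colon M\#k(S^2\times S^2)\to M'\#k'(S^2\times S^2)$ is a normal $5$-bordism whose $\xi$-classifying map is $3$-connected, forcing the class $[(M,\wt\nu_M)]-[(M',\wt\nu_{M'})]$ to vanish in $E^\infty_{p,q}$ for $p\geq 3$, i.e.\ to lie in $F_{2,2}$. Conversely, starting from any bordism representing an element of $F_{2,2}$, one performs surgery below the middle dimension to arrange a $2$-connected classifying map, then extracts a $2$-connected normal map between stabilisations by handle trading.

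Cases (iii) and (iv) are the delicate ones, and where the main obstacle lies. Given a simple homotopy equivalence $\varphi\colon M\#k(S^2\times S^2)\to M'\#k'(S^2\times S^2)$, promote $\varphi$ to a normal map and consider the associated surgery obstruction in $L_5^s(\Z\pi,w)$; the invariant $\kappa_2^s$, defined earlier in the paper on the $\xi$-bordism group, is built precisely to detect this obstruction modulo further $S^2\times S^2$-stabilisation, while $w\cap(-)$ records the compatibility of Euler characteristics. Case (iv) follows the same pattern using $L_5^h$ and $\kappa_2^h$. The converse is the hard step: given a class in the stated kernel, one must run Kreck's modified surgery programme on a representing bordism and combine the kernel conditions with cancellation arguments of Hambleton--Kreck type to upgrade the resulting stable diffeomorphism to a (simple) homotopy equivalence after further $S^2\times S^2$-stabilisation.
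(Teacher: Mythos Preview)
Your plan for (i), (ii), and (vi) is broadly correct and matches the paper. However, there are genuine gaps in (v) and especially (iii)--(iv).

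For (v), the mapping cylinder of a degree one normal map is not a manifold, so it does not furnish a $\xi$-bordism placing $[M]-[M']$ in $F_{2,2}$. The paper instead relies on a theorem of Davis (\cref{thm:davis-general}): given a $2$-connected degree one normal map $f\colon M\to N$, one can choose normal $1$-smoothings so that $[M]-[N]\in F_{2,2}(N,w_2(N))$ with image $\kervaire(f)\cap[N]$ in $E^\infty_{2,2}\cong H_2(N;\Z/2)$. This yields an explicit isomorphism $V\colon \cN(N)\to F_{2,2}(N,w_2(N))$ (\cref{cor:davis}), and that identification is the engine for both directions of (v) as well as for (iii)--(iv).

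For (iii)--(iv) several points are off. The surgery obstruction of a degree one normal map between closed $4$-manifolds lies in $L_4(\Z\pi)$, not $L_5$; a (simple) homotopy equivalence has trivial obstruction there. The map $\kappa_2^X$ is not defined on the bordism group but as $H_2(\pi;\Z/2)\to L_4^X(\Z\pi)$, and it enters via the factorisation $\theta^X=I_0+\kappa_2^X\circ c_*$ of \cref{prop:davis-theta} combined with Davis's identification $V$. The condition $w\cap x=0$ has nothing to do with Euler characteristics: by \cref{lem:ter-ks} it controls the Kirby--Siebenmann invariant, and this is exactly what allows passage from the topological version (\cref{thm:main-top}, where the subgroup is $[\ker(\kappa_2^X)]$ without any $w$-condition) to the smooth statement via \cref{cor:lifting-ker-kappa-smooth}. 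Finally, the converse does not invoke Hambleton--Kreck cancellation at all; it uses only the stable surgery exact sequence of Kirby--Taylor to realise any class in $\ker(\theta^X)\subseteq\cN(M')$ by a (simple) homotopy equivalence $N'\to M'\#k(S^2\times S^2)$, after which Davis's theorem shows $N'$ and $M$ are stably diffeomorphic.
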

\FloatBarrier

Here $\kappa_2^s, \kappa_2^h \colon H_2(\pi;\Z/2)\to L_4^s(\Z\pi)$ denote the components of the surgery assembly maps and $[\,\cdot\,]$ is the composition $H_2(\pi;\Z/2)\cong E^2_{2,2}\twoheadrightarrow E^{\infty}_{2,2}\to  F_{2,2}\leq \Omega_4(\xi)$ (see \cref{sec:background} for further details). 

\begin{remark}
\begin{clist}{(a)}
\item 
Let $M$, $M'$ be manifolds as in \cref{thmx:main-table} but not almost spin, i.e. the universal covers are not spin. Then several of the geometric equivalence relations agree and otherwise the difference is determined by the signatures. More specifically, we have:
\begin{clist}{(1)}
    \item $M$ and $M'$ are stably diffeomorphic if and only if they are (simple) homotopy equivalent.
    \item There exists a $2$-connected degree one normal map $f\colon M\#k(S^2\times S^2)\to M' \# k' (S^2 \times S^2)$ for some $k,k'\ge 0$  if and only if there exist simply connected spin $4$-manifolds $L,L'$ such that $M\#L\cong M'\# L'$. 
    If so, $M$ and $M'$ are stably diffeomorphic if and only if $\sigma(M)=\sigma(M')$.
    \item If there exist simply connected $4$-manifolds $K,K'$ such that $M\#K\cong M'\# K'$, then $K$ and $K'$ can be chosen to be spin if and only if $\sigma(M)\equiv \sigma(M')\mod 8$.
\end{clist}
This follows from \cite[Theorem~C]{surgeryandduality}, see also \cite[Lemma~2.1]{KPT}.
\item 
We also prove a version of the theorem in the topological category (see \cref{thm:main-top}).
\item 
We can always take $k'=0$ in (v) since the collapse map $M' \# k'(S^2 \times S^2) \to M'$ is a $2$-connected degree one normal map.
It is also not immediately obvious that the geometric relation in (v) is actually an equivalence relation. Whilst this is implied by the statement above, we check this directly in \cref{rem:eq-rel-direct}.
\item 
The geometric interpretations of $F_{0,4}$ and $F_{3,1}$ follow directly from Kreck's modified surgery theory. In fact, the stable equivalence relation corresponding to $F_{3,1}$ coincides with $\CP^2$-stable diffeomorphism which was studied in \cite{KPT}.
In the case of 2-dimensional fundamental groups, a connection between $\ker(\kappa_2)$ and $\Omega_4(\xi)$ was previously established in \cite[Lemma~5.11]{HKT}. The geometric interpretation of $F_{2,2}$ is new. 
\end{clist}
\end{remark}
 
It was shown by Gompf \cite{gompf} that two closed oriented smooth 4-manifolds are stably homeomorphic if and only if they are stably diffeomorphic. 
This equivalence relation also coincides with being stably $h$-cobordant and stably $s$-cobordant by results of Wall \cite{Wa64} and Lawson \cite{La78} (see also \cite[Theorem 3.4]{KPR22}).

If $\equiv$ is an equivalence relation on closed $4$-manifolds, then write $\equiv^{\st}$ for the corresponding stable equivalence relation, i.e. $M \equiv^{\st} M'$ if $M \# a(S^2 \times S^2) \equiv M' \# b(S^2 \times S^2)$ for some $a,b \ge 0$. 
We refer to $\simeq^{\st}$ (resp. $\simeq_s^{\st}$) as \emph{(simple) homotopy equivalent up to stabilisation}.
We have intentionally avoided use of the term `stably homotopy equivalent' in order to avoid confusion with notions from stable homotopy theory. Let $\homeo$ denote homeomorphism. Then $\cong^{\st}$ (resp. $\homeo^{\st}$) denotes stable diffeomorphism (resp. homeomorphism), which coincide for closed oriented smooth $4$-manifolds.

The proof of \cref{thmx:main-table} involves a mixture of techniques from both surgery and modified surgery. The geometric interpretation of $F_{2,2}$ in terms of degree one normal maps is used to establish the interpretations of $[\ker(\kappa_2^s)]$ and $[\ker(\kappa_2^h)]$ (see \cref{s:proofs}). 
 
\subsection{Comparison between stable equivalence relations} \label{ss:comparison-stable}

Stable diffeomorphism has for example been studied in \cites{teichnerthesis,HKT,KLPT15}. From these references it follows that $\xi$-bordism mod $A$ is different for $A=0$, $F_{0,4}$, $F_{2,2}$, $F_{3,1}$ and $F_{4,0}$ (see, for example, \cite[Theorem~4.4.9]{teichnerthesis}).
For the other equivalence relations coming from \cref{thmx:main-table}, we have:
\[
\substack{\text{\normalfont\normalsize stable diffeomorphism ($\cong^{\st}$)} \\ \text{\normalfont\normalsize ($\Leftrightarrow$ stable homeomorphism ($\homeo^{\st}$))}} 
\, \Rightarrow \, 
\substack{\text{\normalfont\normalsize simple homotopy equivalence} \\ \text{\normalfont\normalsize up to stabilisations ($\simeq_s^{\st}$)}} 
\, \Rightarrow \,
\substack{\text{\normalfont\normalsize homotopy equivalence up} \\ \text{\normalfont\normalsize to stabilisations ($\simeq^{\st}$)}}\]
for closed oriented smooth $4$-manifolds.

We will now consider the extent to which these equivalence relations coincide.
In order to study the difference between homotopy equivalence up to stabilisation and stable homeomorphism, we will make the following definition.

	\begin{definition}
		\label{def:stable-rigid}
		A group $\pi$ satisfies \emph{stable rigidity} if any two closed oriented smooth homotopy equivalent $4$-manifolds $M, M'$ with fundamental group $\pi$ are stably diffeomorphic.
	\end{definition}

\begin{remark} \label{remark:implications-stable}
This is equivalent to the same definition with `homotopy equivalent' replaced by `homotopy equivalent after stabilisations'. In particular, $\pi$ satisfies stable rigidity if and only if the three stable equivalence relations coincide for 4-manifolds with fundamental group $\pi$. 
\end{remark}
 
It follows from \cref{thmx:main-table} that if $\kappa_2^h$ is injective for a group $\pi$, then $\pi$ is stably rigid. This recovers a result of Davis~\cite{Davis-05}*{Theorem~1.4}. In general however, $\kappa_2^h$ is not injective and Teichner gave examples of pairs of smooth manifolds with quaternionic \cite[Example~5.2.4]{teichnerthesis} or infinite dihedral fundamental group \cite[Proposition~3]{teichner-star} 
that are homotopy equivalent but not stably diffeomorphic. While $\kappa_2^h$ is not injective for abelian groups in general, we use \cref{thmx:main-table} together with known calculations of the assembly map to show the following.
	
\begin{thmx}
\label{thm:abelian}
If $\pi$ is a finitely generated abelian group, then $\pi$ satisfies stable rigidity. That is, if $M, M'$ are two closed oriented smooth homotopy equivalent $4$-manifolds with abelian fundamental groups, then $M,M'$ are stably diffeomorphic.
\end{thmx}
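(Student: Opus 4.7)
The strategy is to reduce the statement to an algebraic claim about the assembly map, and then exploit the known structure of $L$-theory and the Atiyah--Hirzebruch spectral sequence for abelian $\pi$.

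First I would dispose of the non-almost spin case using Remark~(a)(1) of \cref{thmx:main-table}, which directly gives stable diffeomorphism from homotopy equivalence. In the almost spin case, since $M$ and $M'$ are oriented we have $w=0$, so $\ker(w \cap -) = H_2(\pi;\Z/2)$. Applying \cref{thmx:main-table}(i) and (iv), the conclusion $M \cong^{\st} M'$ reduces to the purely algebraic statement that the composite
\[
H_2(\pi;\Z/2) \xrightarrow{\kappa_2^h} L_4^h(\Z\pi),
\qquad
H_2(\pi;\Z/2) \xrightarrow{[\,\cdot\,]} F_{2,2} \leq \Omega_4(\xi)
\]
satisfies $[\ker(\kappa_2^h)] = 0$ for every finitely generated abelian $\pi$.

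Next I would analyse the target of $[\,\cdot\,]$ via Teichner's spectral sequence. Since $\xi$ is the spin normal $1$-type, this is the Atiyah--Hirzebruch spectral sequence for $\Omega_*^{\spin}(B\pi)$, and in the $(2,2)$-spot the relevant $d_2$ differential is (up to the $w_2$-twist, which vanishes in our case) the dual Steenrod square $\Sq^2 \colon H_4(\pi;\Z/2) \to H_2(\pi;\Z/2)$. Thus $\ker([\,\cdot\,])$ contains $\Sq^2 H_4(\pi;\Z/2)$, and the task becomes to show
\[
\ker(\kappa_2^h) \;\subseteq\; \Sq^2\bigl(H_4(\pi;\Z/2)\bigr).
\]

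To verify this inclusion I would use the structure theorem $\pi \cong \Z^n \oplus T$ with $T$ finite abelian, and the Künneth formula to decompose $H_*(\pi;\Z/2)$. Odd-order factors of $T$ contribute nothing to mod-$2$ homology, so the problem localises at the $2$-primary part. For the torsion-free factor one invokes Shaneson's splitting and the validity of the Farrell--Jones conjecture for $\Z^n$, which gives a direct-sum decomposition of the assembly map compatible with the spectral sequence filtration; on the purely torsion-free summand of $H_2(\pi;\Z/2)$ one obtains injectivity of $\kappa_2^h$, hence nothing to check. The remaining contributions involve classes coming from $2$-primary cyclic factors, where one uses the explicit computation of $L_4^h(\Z[\Z/2^k])$ (e.g.\ via Bak, Hambleton--Taylor) to identify $\ker(\kappa_2^h)$ and match it precisely with the image of $\Sq^2$.

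The main obstacle is the last step: pinning down $\ker(\kappa_2^h)$ on the $2$-primary mixed classes in $H_2(\pi;\Z/2)$ and exhibiting them all as $\Sq^2$-images. This is a bookkeeping problem once one has a clean splitting of the assembly map under Künneth, but it is where the assumption of \emph{abelian} (rather than merely having injective $\kappa_2^h$, as in Davis's result) is genuinely used. Once this inclusion is established, the composition $H_2(\pi;\Z/2) \to F_{2,2}$ annihilates $\ker(\kappa_2^h)$, so \cref{thmx:main-table}(iv) upgrades to \cref{thmx:main-table}(i), proving stable diffeomorphism.
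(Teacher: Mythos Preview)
There is a genuine gap at the very start of your almost spin reduction. You write ``since $M$ and $M'$ are oriented we have $w=0$'', but this confuses $w_1$ with $w_2$. The class $w\in H^2(\pi;\Z/2)$ in the normal $1$-type $\xi(\pi,w)$ is the pullback of $w_2(M)$, not the orientation class; orientability only gives $w_1=0$. An almost spin manifold has $w_2(\wt M)=0$, meaning $w_2(M)$ comes from $B\pi$, but it need not vanish. So your argument only addresses the genuinely spin case and says nothing about the almost spin manifolds with $w\neq 0$, which is where the content lies.

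Consequently your target inclusion $\ker(\kappa_2^h)\subseteq \Sq^2(H_4(\pi;\Z/2))$ is the wrong one: for general $w$ the relevant $d_2$ is dual to $\Sq^2 + (-\cup w)$, and the element you need to kill lies in $\ker(\kappa_2^h)\cap\ker(w\cap -)$ (the second condition coming from equality of Kirby--Siebenmann invariants in the smooth case, via \cref{lem:ter-ks}). The paper's proof exploits exactly this: it first reduces to a finite abelian subgroup and shows (\cref{lem:abelian1}) that any $x\in\ker(\kappa_2^h)$ satisfies $\alpha\cap x=0$ for every $\alpha$ with $\alpha^2=0$; then a short linear-algebra lemma (\cref{lem:abelian2}) about the cohomology ring of a finite abelian group shows that any functional vanishing on such $\alpha$ and on $w$ factors through $\Sq^2+(-\cup w)$, hence $x$ lies in the image of the twisted $d_2$. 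Your outline would need to be reworked along these lines to handle $w\neq 0$; the Farrell--Jones/Shaneson route you sketch does not obviously interact with the $w$-twist.
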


Equivalently, homotopy equivalence up to stabilisations coincides with stable diffeomorphism for closed oriented smooth $4$-manifolds whose fundamental group is abelian. This can be viewed as a generalisation of the result of Gompf \cite{gompf} stated above.

We will also consider the case where $\pi$ is a finite group whose Sylow $2$-subgroup is cyclic, quaternionic, dihedral, semi-dihedral or modular maximal-cyclic (see \cref{s:stablerigidity} for precise definitions). The first four classes are the so-called \textit{basic groups}. They have special significance in surgery theory as basic subquotients determine surgery obstructions in the $p$-decorated L-groups $L_4^p(\Z\pi)$ (see \cite[Section 2]{HM80} and \cite[Theorem A]{HM93}). We completely determine which of these groups satisfy stable rigidity in \cref{s:stablerigidity}.

We will next consider the difference between simple homotopy equivalence up to stabilisations and homotopy equivalence up to stabilisations. As in \cref{remark:implications-stable}, we would equivalently like to know whether homotopy equivalence implies simple homotopy equivalence up to stabilisations.
The following reduces the existence of such examples to a problem concerning $\kappa_2^s$ and $\kappa_2^h$.

        \begin{thmx}
	\label{cor:he-vs-she}
	There exist closed oriented smooth $4$-manifolds $M$, $M'$ that are homotopy equivalent but not simple homotopy equivalent up to stabilisations if and only if there exists a finitely presented group $\pi$ with $\ker(\kappa_2^s)\neq\ker(\kappa_2^h) \subseteq H_2(\pi;\Z/2)$.
	\end{thmx}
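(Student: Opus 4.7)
The plan is to apply \cref{thmx:main-table}(iii) and (iv), which identify simple homotopy equivalence (resp.\ homotopy equivalence) up to stabilisation of closed oriented almost spin smooth $4$-manifolds with $\xi$-bordism modulo $[\ker(\kappa_2^s)\cap\ker(w\cap-)]$ (resp.\ $[\ker(\kappa_2^h)\cap\ker(w\cap-)]$).

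For the forward implication, suppose $M$ and $M'$ are closed oriented smooth $4$-manifolds that are homotopy equivalent but not simple homotopy equivalent up to stabilisation. By part (1) of the remark following \cref{thmx:main-table}, both must be almost spin, since otherwise the two relations already coincide. They therefore share a normal $1$-type $\xi=\xi(\pi,w)$ with $\pi=\pi_1(M)$ finitely presented. Combining parts (iii) and (iv) of \cref{thmx:main-table} produces a bordism class difference lying in $[\ker(\kappa_2^h)\cap\ker(w\cap-)]\setminus[\ker(\kappa_2^s)\cap\ker(w\cap-)]$, forcing $\ker(\kappa_2^s)\neq\ker(\kappa_2^h)$ inside $H_2(\pi;\Z/2)$.

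For the converse, given a finitely presented $\pi$ with $\ker(\kappa_2^s)\neq\ker(\kappa_2^h)$, choose $x\in\ker(\kappa_2^h)\setminus\ker(\kappa_2^s)$ and set $w=0$, so that $\ker(w\cap-)=H_2(\pi;\Z/2)$ automatically contains $x$. I would realise $[x]\in F_{2,2}\leq\Omega_4(\xi(\pi,0))$ as a bordism class difference of two closed oriented smooth spin $4$-manifolds $M$ and $M'$ with fundamental group $\pi$. A base $M'$ with $\pi_1=\pi$ exists for any finitely presented $\pi$ by a standard construction (e.g.\ doubling a thickening of a finite presentation $2$-complex). Any class in $\Omega_4(\xi(\pi,0))$ is represented by a closed spin $4$-manifold with $\pi_1\cong\pi$, obtained from any bordism representative by surgeries below the middle dimension. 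Applying this to $[(M',\wt\nu_{M'})]+[x]$ produces the desired $M$. Part (iv) of \cref{thmx:main-table} then yields $M\simeq^{\st}M'$.

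It remains to show $M\not\simeq_s^{\st}M'$. If we had $M\simeq_s^{\st}M'$, part (iii) of \cref{thmx:main-table} would give $[x]=[y]$ in $F_{2,2}$ for some $y\in\ker(\kappa_2^s)$, and the hard step is to upgrade this to $x\in\ker(\kappa_2^s)$, contradicting the choice of $x$. Since $[\,\cdot\,]$ is the composite $H_2(\pi;\Z/2)\cong E^2_{2,2}\twoheadrightarrow E^{\infty}_{2,2}\to F_{2,2}$, this amounts to checking that the kernel of $E^2_{2,2}\twoheadrightarrow E^{\infty}_{2,2}$, generated by images of the incoming Teichner-spectral-sequence differentials (principally $d_2\colon H_4(\pi;\Z/2)\to H_2(\pi;\Z/2)$, together with a $d_3$ from a subquotient of $H_5(\pi;\Z)$), already lies in $\ker(\kappa_2^s)$. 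This is the main obstacle, and I would expect the required compatibility to be built into the proof of \cref{thmx:main-table}(iii) and (iv) itself, which routes through the degree one normal map characterisation of $F_{2,2}$ in part (v) and thereby identifies the relevant surgery obstruction with $\kappa_2^s$.
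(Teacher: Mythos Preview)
Your forward implication is correct and matches the paper's argument.

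The converse has a genuine gap. Taking $w=0$ gives you no control over the image of $x$ in $E^\infty_{2,2}(\xi(\pi,0))$: the differential $d_2$ is the dual of $\Sq^2$, and there is no reason its image should lie in $\ker(\kappa_2^s)$. So your class $x$ may very well die in $E^\infty_{2,2}$ (making $M$ and $M'$ stably diffeomorphic, hence $\simeq_s^{\st}$), or may become congruent modulo the differentials to an element of $\ker(\kappa_2^s)$. Your hope that this compatibility is ``built into the proof of \cref{thmx:main-table}'' is not borne out: the brackets $[\,\cdot\,]$ in that theorem denote precisely the passage to $E^\infty_{2,2}\subseteq F_{2,2}$, and nothing in the proof asserts that the kernel of that passage lies in $\ker(\kappa_2^s)$.

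The paper circumvents this by a different construction. Given $x\in\ker(\kappa_2^h(G))\setminus\ker(\kappa_2^s(G))$, it chooses $w\in H^2(G;\Z/2)$ with $w\cap x=1$. By \cref{lem:ter-ks} the map $w$ factors through $E^\infty_{2,2}(\xi(G,w))$, so $[x]\neq 0$ there---this is exactly the leverage you are missing with $w=0$. The price is $\ks=w\cap x=1$, which obstructs smoothability; the paper fixes this by passing to $\pi=G\ast G$ with class $(x,x)$ and $w'=(w,w)$, so that $\ks=(w\cap x)+(w\cap x)=0$ while naturality of the James spectral sequence (projecting to a factor) together with \cref{lem:ter-ks} still forces $[(x,x)]\neq 0$ in $E^\infty_{2,2}$. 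The remark immediately following \cref{cor:he-vs-she} in the paper makes this point explicitly: one cannot in general stay with the original group $\pi$ in the smooth category.
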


    \begin{remark}
    If $\ker(\kappa_2^s)\neq\ker(\kappa_2^h)$ for a group $\pi$, then the closed oriented smooth $4$-manifolds $M$, $M'$ constructed in the proof take a very particular form and have fundamental group $\pi \ast \pi$.
It might not always be possible to find $M$, $M'$ with fundamental group $\pi$.
This is because to apply \cref{thmx:main-table} it does not suffice to have $\ker(\kappa_2^s)\neq\ker(\kappa_2^h)$ but we need the existence of a $w\in H^2(\pi;\Z/2)$ such that 
        \[[\ker(\kappa_2^s)\cap \ker(w\frown -)]\neq [\ker(\kappa_2^h)\cap \ker(w\frown -)]\leq \Omega_4(\xi(\pi,w)).\]
        On the other hand, the proof can be used to construct topological $4$-manifolds with fundamental group $\pi$ that  are homotopy equivalent but not simple homotopy equivalent up to stabilisations.
    \end{remark}

Examples of closed topological $4$-manifolds which are homotopy equivalent but not simple homotopy equivalent were found only recently \cite{NNP23}. It is currently open whether or not there exist closed smooth $4$-manifolds which are homotopy equivalent but not simple homotopy equivalent. The above shows that examples exist provided there exists a group $\pi$ with $\ker(\kappa_2^s)\neq\ker(\kappa_2^h)$. We therefore ask the following question.

\begin{question}
Does there exist a finitely presented group $\pi$ such that $\ker(\kappa_2^s)\neq\ker(\kappa_2^h)$?\footnote{
    While this paper was under revision, the authors were informed of upcoming work of Hambleton and \"{U}nl\"{u} \cite{HU25} showing the existence of finitely presented groups for which $\ker(\kappa_2^s) \neq \ker(\kappa_2^h)$. Hence \cref{cor:he-vs-she} shows the existence of closed smooth $4$-manifolds which are homotopy equivalent but not simple homotopy equivalent (even up to stabilisations).
}
\end{question}

We resolve this negatively for several classes of groups and thereby obtain the following result.

\begin{theorem} \label{prop:k_2-equal}
Let $\pi$ be a finitely generated abelian group or a finite group whose Sylow $2$-subgroup is abelian, basic, modular maximal-cyclic, or of order at most $16$.
Then $\ker(\kappa_2^s) = \ker(\kappa_2^h)$. In particular, closed, oriented, smooth $4$-manifolds $M$, $M'$ with fundamental group $\pi$ which are homotopy equivalent are simple homotopy equivalent up to stabilisations.
\end{theorem}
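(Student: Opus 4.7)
The inclusion $\ker(\kappa_2^s) \subseteq \ker(\kappa_2^h)$ is automatic, since $\kappa_2^h$ factors through $\kappa_2^s$ via the decoration-forgetting map $L_4^s(\Z\pi) \to L_4^h(\Z\pi)$. The content of the theorem is the reverse inclusion, and the main tool I would use is the Rothenberg exact sequence
$$\widehat{H}^1(\Z/2;\Wh(\pi)) \to L_4^s(\Z\pi) \to L_4^h(\Z\pi) \to \widehat{H}^0(\Z/2;\Wh(\pi)).$$
If $x \in H_2(\pi;\Z/2)$ satisfies $\kappa_2^h(x) = 0$, then $\kappa_2^s(x)$ lies in the kernel of $L_4^s(\Z\pi) \to L_4^h(\Z\pi)$, i.e.\ in the image of $\widehat{H}^1(\Z/2;\Wh(\pi))$. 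The plan is to show, for each listed family, that this image meets the image of $\kappa_2^s$ trivially; the cleanest scenario is when the Tate cohomology itself vanishes.

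For the finitely generated abelian case, I would write $\pi = \Z^r \oplus T$ with $T$ finite abelian and use that $\widehat{H}^*(\Z/2;\Wh(\pi))$ is $2$-torsion. Combined with the Bass--Heller--Swan vanishing of $\Wh(\Z^r)$ and a $2$-local transfer argument using naturality of $\kappa_2$, this reduces the assertion to the case of a finite abelian $2$-group, where the Whitehead group and its involution are understood via the work of Bass and Oliver, so that the Tate term can be computed and checked against the image of $\kappa_2^s$ directly.

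For a finite group $\pi$ whose Sylow $2$-subgroup $\pi_2$ is in the listed families, I would again localise at $2$: since $H_2(-;\Z/2)$ is detected on Sylow 2-subgroups via restriction/transfer and the assembly maps are natural in the group, the assertion reduces to the same statement for $\pi_2$ itself. For each listed $2$-group (abelian, cyclic, quaternion, dihedral, semi-dihedral, modular maximal-cyclic, and the remaining groups of order at most $16$), I would combine Oliver's computation of $SK_1(\Z\pi_2)$ and of $\Wh(\pi_2)$ with the description of surgery obstructions in the $p$-decorated $L$-groups due to Hambleton--Madsen and Hambleton--Taylor to identify $\ker(\kappa_2^s)$ and $\ker(\kappa_2^h)$ explicitly and confirm they agree.

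The main obstacle is the case-by-case nature of the argument for non-abelian $2$-groups whose Whitehead torsion has nontrivial $2$-torsion contributing to Tate cohomology --- most notably the semi-dihedral and modular maximal-cyclic groups, as well as the remaining groups of order $16$. For these, no vanishing theorem is available, and one must trace the quadratic refinement underlying $\kappa_2^s$ through the Rothenberg sequence, using the explicit Arf-type invariant presentation of the $H_2(\pi;\Z/2)$-part of the surgery obstruction, to rule out the problematic intersection. The ``in particular'' part of the statement is then immediate from parts (iii) and (iv) of \cref{thmx:main-table}: once $\ker(\kappa_2^s) = \ker(\kappa_2^h)$, the associated subgroups of $\Omega_4(\xi)$ coincide, and hence homotopy equivalence up to stabilisations agrees with simple homotopy equivalence up to stabilisations for smooth almost spin $4$-manifolds with fundamental group $\pi$, the non-almost-spin case already being covered by Remark (a)(1).
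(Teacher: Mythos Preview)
Your approach via the Rothenberg sequence is genuinely different from the paper's, but it is both more complicated and, as you yourself concede, incomplete in the key cases. The paper never touches Rothenberg or $\Wh(\pi)$ at all. Instead it exploits a much simpler observation you have overlooked: for each of the basic $2$-groups (cyclic, quaternion, dihedral, semi-dihedral), Taylor--Williams \cite[Theorem~4.1(b)]{TW80} shows that the map $\kappa_2^s$ is \emph{identically zero}, so $\ker(\kappa_2^s) = H_2(\pi;\Z/2) = \ker(\kappa_2^h)$ trivially. For finitely generated abelian groups the paper gives an explicit description of both kernels (\cref{lem:abelian0}) using naturality with respect to projections onto $C_{n_i}\times C_{n_j}$ together with the Morgan--Pardon nonvanishing on $C_4\times C_2$; again no Rothenberg is needed. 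For $M_k(2)$ the paper uses naturality under the quotient map to $C_{2^{k-2}}\times C_2$ to show that $\kappa_2^h$ is nonzero on the one class not coming from cyclic subgroups, so again both kernels coincide with the same explicit index-$2$ subgroup. Finally, for the five remaining groups of order $16$, the paper uses a GAP computation to show that $H_2(\pi;\Z/2)$ is generated by images from subgroups on which $\kappa_2^s$ is already known to vanish, with one extra naturality argument for $G_{(16,4)} = C_4\rtimes C_4$.

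Your plan, by contrast, would require computing $\Wh(\pi)$ with involution and its Tate cohomology for each group, and then --- in the cases where that Tate term is nonzero --- showing that the image of $\widehat{H}^1(\Z/2;\Wh(\pi))$ in $L_4^s$ meets $\im(\kappa_2^s)$ trivially. You correctly identify this last step as the main obstacle, but you give no indication of how to carry it out; ``tracing the quadratic refinement through the Rothenberg sequence'' is a restatement of the problem, not a solution. So while your outline is not wrong in principle, it is a plan rather than a proof, and it misses the elementary route the paper takes: reduce to the Sylow $2$-subgroup via transfer (\cref{prop:oddindex-k2}, which you do have), then use naturality of $\kappa_2^s$ and $\kappa_2^h$ under subgroup inclusions and quotient maps to reduce to groups where $\kappa_2^s$ is already known to vanish.
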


\subsection{Comparison between stable and unstable equivalence relations} \label{ss:stable-vs-unstable}

Kreck's modified surgery makes it possible to classify closed oriented smooth $4$-manifolds up to stable diffeomorphism (or equivalently stable homeomorphism) over special choices of fundamental group $\pi$. By \cref{thmx:main-table}, the same can be said for (simple) homotopy up to stabilisations (see \cref{ss:comparison-stable}).

In order to complete the classification up to homeomorphism (resp. (simple) homotopy equivalence), it remains to solve the corresponding \textit{cancellation problem}.
In the case of homeomorphism, which we denote by $\homeo$, this asks for conditions under which
\[ M \# (S^2 \times S^2) \homeo N \# (S^2 \times S^2) \quad \Rightarrow \quad M \homeo N.\]
This was studied by Hambleton-Kreck \cite{hambleton-kreck93} and Crowley-Sixt \cite{CS}.

The following allows us study the cancellation problem up to (simple) homotopy equivalence and compare it to the homeomorphism case.

\begin{thmx}
\label{thm:decomp}
Let $M$, $N$ be closed oriented topological $4$-manifolds with good fundamental groups. If $M$, $N$ are (simple) homotopy equivalent up to stabilisations, then there exists a topological $4$-manifold $N'$ which is (simple) homotopy equivalent to $N$ and stably homeomorphic to $M$.
\end{thmx}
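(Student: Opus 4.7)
The plan is to combine the topological version of \cref{thmx:main-table} with Freedman--Quinn topological surgery over good fundamental groups. Suppose that $M$ and $N$ are (simple) homotopy equivalent up to stabilisations. By the topological analogue of \cref{thmx:main-table}(iii)--(iv), $M$ and $N$ share a common normal $1$-type $\xi = \xi(\pi,w)$ and admit normal $1$-smoothings $\wt\nu_M$, $\wt\nu_N$ such that
\[[(M,\wt\nu_M)] - [(N,\wt\nu_N)] = [\alpha] \in [\ker(\kappa_2^{s/h}) \cap \ker(w\cap -)] \subseteq F_{2,2} \subseteq \Omega_4(\xi)\]
for some $\alpha \in \ker(\kappa_2^{s/h}) \cap \ker(w\cap -)$. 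In particular, the difference lies in $F_{2,2}$, so part (v) of \cref{thmx:main-table}, combined with Remark (c), yields a $2$-connected degree one normal map $f\colon M \# k(S^2 \times S^2) \to N$ for some $k \ge 0$.

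The next step is to arrange that the surgery obstruction $\sigma(f) \in L_4^{s/h}(\Z\pi)$ equals $\kappa_2^{s/h}(\alpha)$, so that it vanishes by hypothesis. This is the naturality one expects: the composition $H_2(\pi;\Z/2) \to F_{2,2}$ sends $\alpha$ to the normal bordism class of a $2$-connected degree one normal map whose surgery obstruction is precisely the image of $\alpha$ under the surgery assembly map, which on this summand is $\kappa_2^{s/h}(\alpha)$. Should the $f$ produced by (v) not realise this identification on the nose, one may correct it within its normal bordism class by connected sum with a closed normal map $L \to S^4$ realising the required element of $L_4^{s/h}(\Z) \subseteq L_4^{s/h}(\Z\pi)$.

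Since $\pi$ is good, Freedman--Quinn topological surgery \cites{Freedman82,Freedman-Quinn} now applies: a finite sequence of surgeries on the source of $f$ produces a (simple) homotopy equivalence $g\colon N' \to N$ for some closed topological $4$-manifold $N'$. The trace of these surgeries is a compact $5$-dimensional $\xi$-bordism from $(M \# k(S^2 \times S^2), \wt\nu)$ to $(N', g^*\wt\nu_N)$, so they represent the same class in $\Omega_4(\xi)$. By the topological analogue of \cref{thmx:main-table}(i), they are stably homeomorphic, and hence so are $M$ and $N'$. Since $N'$ is (simple) homotopy equivalent to $N$ by construction, this proves the theorem.

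The main obstacle I anticipate is pinning down the identification $\sigma(f) = \kappa_2^{s/h}(\alpha)$. While this is ultimately forced by the definition of the surgery assembly map, a clean realisation will likely require a concrete description of the $2$-connected degree one normal map associated to $\alpha \in H_2(\pi;\Z/2)$ under the filtration on $\Omega_4(\xi)$, extracted from the proof of (v). Once this compatibility is in hand, the remainder is a routine application of topological surgery.
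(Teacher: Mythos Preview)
Your approach is essentially the paper's \emph{alternative} proof of \cref{thm:decomp}, which also runs through \cref{thm:main-top} and Freedman--Quinn surgery. (The paper's primary proof is different: it iterates \cref{prop:stable-summand}, which uses Freedman's disc embedding theorem directly to split off one $S^2\times S^2$ at a time.) A few points where your write-up should be tightened:

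\textbf{The detour through part (v) and the ``correction'' are unnecessary.} Your anticipated obstacle --- identifying $\sigma(f)$ with $\kappa_2^{s/h}(\alpha)$ --- dissolves if you proceed as the paper does: rather than producing some $2$-connected degree one normal map from (v) and then trying to compute its surgery obstruction, simply lift $\alpha\in\ker(\kappa_2^{s/h})\subseteq H_2(\pi;\Z/2)$ to $x'\in H_2(N;\Z/2)$ and take the normal invariant $(0,x')\in\Z\oplus H_2(N;\Z/2)\cong\cN(N)$ directly. By \cref{prop:davis-theta} its surgery obstruction is $I_0(0)+\kappa_2^{s/h}(\alpha)=0$, so goodness of $\pi$ gives a (simple) homotopy equivalence $f\colon N'\to N$ with $\eta(f)=(0,x')$. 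Then \cref{thm:davis-general} yields $[N']-[N]=[\alpha]=[M]-[N]$ in $\Omega_4(\xi)$, hence $M$ and $N'$ are stably homeomorphic. Your proposed correction by connected sum with $L\to S^4$ realising an element of $L_4(\Z)$ would change both the source manifold and the normal bordism class, so it does not do what you claim.

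\textbf{Two smaller gaps.} First, \cref{thm:main-top} only applies to almost spin manifolds; you need to dispose of the totally non-spin case separately (there $M$ and $N$ are already stably homeomorphic, as the paper notes). Second, in the topological statement the relevant subgroup is $[\ker(\kappa_2^{s/h})]$, not $[\ker(\kappa_2^{s/h})\cap\ker(w\cap-)]$; the intersection with $\ker(w\cap-)$ appears only in the smooth version of \cref{thmx:main-table}.
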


\begin{remark}
   Homotopy equivalence up to stabilisations coincides with the equivalence relation generated by homotopy equivalence and stable homeomorphism. \cref{thm:decomp} shows that, on the class of closed topological $4$-manifolds with good fundamental group, only a single homotopy equivalence and a single stable homeomorphism are needed.
\end{remark}

The proof is based on the observation that, if $M$, $N$ are closed oriented topological $4$-manifolds with good fundamental groups such that $M \simeq N \# (S^2 \times S^2)$, then there exists a topological $4$-manifold $N'$ such that $M \homeo N' \# (S^2 \times S^2)$ and $N \simeq N'$ (see \cref{prop:stable-summand}). 
This is shown to be a consequence of Freedman's disc embedding theorem \cite{Fr83} (see also \cite[Theorem 2.3]{PRT21}).

It was shown by Hambleton-Kreck that, if two closed oriented topological 4-manifolds $M$, $N$ with finite fundamental group are stably homeomorphic and have the same Euler characteristic, then $M \# (S^2 \times S^2) \homeo N \# (S^2 \times S^2)$ \cite[Theorem B]{hambleton-kreck93}. 
Combining this theorem with \cref{thm:decomp}, we obtain the following homotopy cancellation result.

\begin{corollary}
    \label{thm:cancellation}
	Let $M$, $N$ be closed oriented topological $4$-manifolds with the same Euler characteristic and finite fundamental group which are (simple) homotopy equivalent up to stabilisations. If $M\simeq M_0\# (S^2 \times S^2)$ for a topological $4$-manifold $M_0$, then $M$, $N$ are (simple) homotopy equivalent.
\end{corollary}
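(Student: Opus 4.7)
My plan is to combine \cref{thm:decomp} with the Hambleton-Kreck stable cancellation theorem just quoted above the corollary, and then exploit the $(S^2\times S^2)$-summand of $M$ supplied by the hypothesis together with \cref{prop:stable-summand}, in order to reduce everything to a final cancellation of a single $(S^2\times S^2)$.

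First I would apply \cref{thm:decomp} to the pair $(M,N)$: since the fundamental group is finite and hence good, the theorem yields a closed oriented topological $4$-manifold $N'$ with $N\simeq_{(s)} N'$ and $N'\homeo^{\st} M$. Because $\chi(N')=\chi(N)=\chi(M)$ and $\pi_1(N')$ is finite, the Hambleton-Kreck theorem \cite[Theorem~B]{hambleton-kreck93} produces a homeomorphism
\[ N' \# (S^2\times S^2) \homeo M \# (S^2\times S^2). \]
The stable homotopy hypothesis has thus been traded for a single stable homeomorphism with only one $(S^2\times S^2)$ on each side.

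Next I would use the hypothesis $M\simeq M_0\#(S^2\times S^2)$: by \cref{prop:stable-summand} this homotopy splitting is promoted to a topological one, giving a homeomorphism $M\homeo M_0'\#(S^2\times S^2)$ with $M_0'\simeq M_0$. Substituting then yields $N'\#(S^2\times S^2)\homeo M_0'\#2(S^2\times S^2)$. Viewing this as a homotopy equivalence $N'\#(S^2\times S^2)\simeq (M_0'\#(S^2\times S^2))\#(S^2\times S^2)$ and applying \cref{prop:stable-summand} once more, with $M_0'\#(S^2\times S^2)\simeq M$ playing the role of $N$, produces a closed oriented topological $4$-manifold $W$ with $N'\#(S^2\times S^2)\homeo W\#(S^2\times S^2)$ and $W\simeq M$.

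The hard part will be the final cancellation: to conclude $N\simeq_{(s)} M$ it suffices to establish $N'\simeq_{(s)} W$, since then $N\simeq_{(s)} N'\simeq_{(s)} W\simeq M$. This is a homotopy cancellation of a single $(S^2\times S^2)$-summand from a stable homeomorphism, and it does not follow from \cref{prop:stable-summand} alone. However, $W\simeq M$ already carries an $(S^2\times S^2)$-summand up to homotopy, so the common stabilisation $N'\#(S^2\times S^2)\homeo M_0'\#2(S^2\times S^2)$ contains at least two hyperbolic summands in its intersection form; a Hambleton-Kreck-style cancellation argument in this hyperbolic setting then delivers the required (simple) homotopy equivalence between the two destabilisations. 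Verifying this cancellation, and checking that it preserves simple homotopy type (rather than only homotopy type) in the simple case, is the main technical point I expect to be difficult.
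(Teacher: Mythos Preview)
Your approach is on the right track but uses only a weak consequence of Hambleton--Kreck's Theorem~B. The form quoted just above the corollary (stably homeomorphic with equal Euler characteristic implies homeomorphic after one stabilisation) is itself a corollary of the full statement, which is that $\cb(\pi)\le 1$ for finite $\pi$: if $X$ and $Y$ are stably homeomorphic closed oriented $4$-manifolds with finite fundamental group and equal Euler characteristic, and $X\homeo X_0\#(S^2\times S^2)$ for some $X_0$, then already $X\homeo Y$. The ``hard part'' you flag at the end is exactly this statement applied to $X=M$ (or $W$) and $Y=N'$; your proposed ``Hambleton--Kreck-style cancellation argument'' \emph{is} Theorem~B in its original strength, not something to be re-derived. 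Note also that the weak form does not formally imply the strong one, so the detour through steps involving $M_0'$ and $W$ does not buy you anything.

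With the full theorem in hand the argument collapses to the paper's proof (packaged there via \cref{thm:main-cancellation}): \cref{thm:decomp} gives $N'$ with $N\simeq_{(s)}N'$ and $M\homeo^{\st}N'$; \cref{prop:stable-summand} upgrades $M\simeq M_0\#(S^2\times S^2)$ to $g^{\ttop}(M)\ge 1$; then Hambleton--Kreck yields $M\homeo N'$ directly, whence $M\simeq_{(s)}N'\simeq_{(s)}N$. Your construction of $W$ is redundant (from $N'\#(S^2\times S^2)\homeo M\#(S^2\times S^2)$ one may simply take $W=M$), and your concern about preserving simplicity evaporates once the final comparison is a homeomorphism rather than merely a homotopy equivalence.
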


In fact, we prove a much more general statement about the relationship between the cancellation problems for homeomorphism and (simple) homotopy equivalence (see \cref{thm:main-cancellation}).

\subsection*{Organisation of the paper}
In \cref{sec:background}, we recall the basic definitions of Kreck's modified surgery \cite{surgeryandduality}, in particular the r\^{o}le the $\xi$-bordism group $\Omega_4(\xi)$ as well as classical surgery. We recall the description of \cite{Davis-05} (see also \cite{HMTW88}) of the surgery obstruction map.
Our main technical tools are given in \cref{thm:davis} (due to Davis) and \cref{thm:davis-general}.

In \cref{s:proofs}, we restate \cref{thmx:main-table} in topological category (\cref{thm:main-top}) and give a proof. To obtain \cref{thmx:main-table}, we compare the topological (resp. smooth) bordism group $\Omega_4(\xi^{\ttop})$ (resp. $\Omega_4(\xi^\diff)$) and identify the Kirby-Siebenmann invariant in $\Omega_4(\xi^{\ttop})$ (\cref{lem:ter-ks}).

In \cref{s:stablerigidity}, we define (strong) stable rigidity for a group $\pi$. 
When $\pi$ is finite, these properties depend only of the $2$-Sylow subgroup of $\pi$ (\cref{prop:oddindex}).
We prove \cref{thm:abelian} and determine stable rigidity for quaternion (\cref{ss:quaternion}), dihedral or semi-dihedral groups (\cref{ss:Dih_SemiDih}).

In \cref{s:he-vs-she}, we algebraically describe the difference between homotopy and simple homotopy equivalence up to stabilisations (\cref{cor:he-vs-she}).
We show that these equivalence relations are the same if $\pi$ is finite with 2-Sylow subgroups of order $\leq 16$ (\cref{prop:k_2-equal}).

In \cref{sec:cancellation}, we establish \cref{thm:decomp} and use it to find an explicit relationship between the cancellation problems for $4$-manifolds up to homeomorphism and (stable) homotopy (\cref{thm:main-cancellation}).

\subsection*{Conventions}  
Since the results proven are more general, we will work in the topological category for the remainder of this article. Unless otherwise stated, all manifolds are assumed to be closed, connected and oriented.
All groups will be assumed to be finitely presented.
 
\subsection*{Acknowledgements}
JN was supported by the Heilbronn Institute for Mathematical Research as well as a Rankin-Sneddon Research Fellowship and a  John Robertson Bequest Award from the University of Glasgow. 
SV was supported by the Deutsche Forschungsgemeinschaft (DFG, German Research Foundation) under Germany’s Excellence Strategy – EXC-2047/1 – 390685813. The authors thank the University of Glasgow, the University of Southampton and the Max Planck Institute for Mathematics in Bonn for hospitality. We would like to thank Ian Hambleton, Mark Powell and Peter Teichner for useful conversations. We would also like to thank Werner Bley and Graham Ellis for useful discussions on computations.

\section{Surgery theory for 4-manifolds}\label{sec:background}

In preparation for the proof of \cref{thmx:main-table}, we will now recall methods from Kreck's modified surgery (\cref{ss:surgery-kreck}) and classical surgery (\cref{ss:surgery-classical}). In \cref{subsec:transfer_L}, we include a result on $L$-groups $L_4(\Z \pi)$ which we will make use of in \cref{s:stablerigidity}.

While the results in the introduction were stated in the smooth category, we will work in the topological category from now on. Since two closed, oriented, smooth 4-manifolds are stably homeomorphic if and only if they are stably diffeomorphic \cite{gompf}, the results in the topological category imply the analogous results in the smooth category.

	\begin{remark}		
Since it is not currently known whether topological $4$-manifolds are homeomorphic to CW-complexes, the usual definition of simple homotopy equivalence for CW-complexes may not apply. A more general notion of simple homotopy equivalence, which applies to topological $4$-manifolds, has been defined by Kirby-Siebenmann \cite[III, \S 4]{KirbySiebenmann}. The idea is to embed both manifolds into a high dimensional Euclidean space, and say that a homotopy equivalence $f\colon M\to N$ is simple if and only if the induced map between the induced normal disk bundles is a simple homotopy equivalence (see also \cite[Section 2.1]{NNP23}). Throughout this article, we will take this to be our definition of simple homotopy equivalence.
	\end{remark}

\subsection{Kreck's modified surgery}
\label{ss:surgery-kreck}
 
	We start this section by reviewing Kreck's stable classification result. While \cite[Theorem~C]{surgeryandduality} holds for all even dimensional manifolds, we restrict to manifolds of dimension 4. As before, all manifolds are assumed to be closed, connected and orientable. 
	
	Recall that a map between connected spaces is called $2$-connected if it induces an isomorphism on $\pi_1$ and a surjection on $\pi_2$. A map between connected spaces is $2$-coconnected if it induces an isomorphism on $\pi_k$ for $k\geq 3$ and an injection on $\pi_2$.

By $\BSTop$ we mean the classifying space of stable orientable microbundles or equivalently stable orientable fiber bundles with fibre some $\R^n$. This is in analogy with $\BSO$ the classifying space of orientable stable vector bundles. An oriented topological manifold $M$ naturally admits a stable \emph{normal microbundle} 
$\nu_M \colon M\to \BSTop$, 
see \cite{hirsch-microbundles} and \cite{KirbySiebenmann}*{IV, Appendix A}.
 
 \begin{definition}
		Let $M$ be a $4$-manifold. Its \emph{normal $1$-type} is a $2$-coconnected fibration $\xi\colon B\to \BSTop$ such that the stable normal bundle $\nu_M\colon M\to \BSTop$ of $M$ admits a $2$-connected lift $\wt \nu_M\colon M\to B$ along $\xi$. Any such lift $\wt\nu_M$ is a \emph{normal $1$-smoothing} of $M$.
	\end{definition}
\begin{remark}
    For every $4$-manifold $M$ there exists a normal $1$-type, which is unique up to homotopy equivalence over $\BSTop$, and a normal $1$-smoothing. This follows directly from the existence and uniqueness of Moore--Postnikov sections \cite{hatcher}*{Theorem~4.71} applied to the stable normal bundle $M\to \BSTop$.
\end{remark}
	\begin{definition}
		Let $\xi\colon B\to \BSTop$ be a fibration. Then the bordism group $\Omega_n(\xi)$ consists of classes $[M,\wt\nu_M]$ of $n$-manifolds $M$ with lifts $\wt\nu_M\colon M\to B$ of their stable normal bundle along $\xi$ up to bordism over $\xi$. Here, a bordism between $(M,\wt\nu_M)$ and $(N,\wt\nu_M)$ consists of a bordism $W$ between $M$ and $N$ and a lift $\wt\nu_W\colon W\to B$ of the stable normal bundle of $W$ that restrict to the lifts $\wt\nu_M$ and $\wt\nu_N$, respectively.
	\end{definition}
 
	\begin{remark}
Denote by $\Aut(\xi)$ the group of self homotopy equivalences $B\to B$ over $\xi$. It acts transitively on the homotopy classes of normal $1$-smoothings of $M$. The group $\Aut(\xi)$ acts on $\Omega_4(\xi)$ by changing the $\xi$-structures. More details about this action will be given in \cref{rm:aboutXiActionOnOmega}.
	\end{remark}

    The following is a theorem by Kreck.
    
 \begin{theorem}[{\cite[Theorem C]{surgeryandduality}}]
		Two $4$-manifolds with normal $1$-type $\xi$ are stably homeomorphic if and only if their normal $1$-smoothings represent the same class in $\Omega_4(\xi)/\Aut(\xi)$.
	\end{theorem}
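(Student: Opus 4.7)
The plan is to prove both directions of Kreck's theorem using handle-theoretic/surgery arguments on a bordism between $M$ and $M'$ together with their $\xi$-structures.

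For the (easier) forward implication, I would first reduce to showing that if $M$ is a closed $4$-manifold with normal $1$-smoothing $\wt\nu_M$, then $M \# (S^2 \times S^2)$ admits a normal $1$-smoothing whose class in $\Omega_4(\xi)$ differs from $[(M,\wt\nu_M)]$ only by the $\Aut(\xi)$-action. Concretely, I build a $5$-dimensional $\xi$-bordism from $M$ to $M \# (S^2 \times S^2)$ by attaching a single trivial $2$-handle to $M \times [0,1]$ along a null-homotopic framed circle $\gamma \subset M \times \{1\}$ (the resulting upper boundary is $M \# (S^2 \times S^2)$ since the circle is null-homotopic). Because $\xi \colon B \to \BSTop$ is $2$-coconnected, the given lift $\wt\nu_M$ extends over this bordism, and thus $M$ and $M \# (S^2 \times S^2)$ represent the same class in $\Omega_4(\xi)$ for some choice of $\xi$-structures; iterating gives the forward implication.

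For the substantive backward direction, assume after absorbing the $\Aut(\xi)$-action that $[(M,\wt\nu_M)] = [(M',\wt\nu_{M'})] \in \Omega_4(\xi)$ and let $(W, \wt\nu_W)$ be a compact $\xi$-bordism. The plan is to perform interior surgeries on $W$ so that $\wt\nu_W \colon W \to B$ becomes a $3$-equivalence. Surgeries below the middle dimension (killing $\pi_0$, $\pi_1$, and $\pi_2$ relative to $B$) are unobstructed because $\xi$ is $2$-coconnected: a class to be killed lives in the kernel of $(\wt\nu_W)_\ast$, so it maps trivially into $\BSTop$ and hence the required framings exist. The main subtlety here is ensuring that the $\pi_2$-classes can be represented by embedded spheres with compatible $\xi$-structure extensions; this uses the Whitney trick in dimension $5$.

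After this stage, $W$ admits a handle decomposition relative to $M$ with handles only of index $2$ and $3$. The final, and hardest, step is to interpret each $2$-handle (respectively dually each $3$-handle) as a stabilisation by $S^2 \times S^2$. Concretely, attaching a $2$-handle to $M \times I$ along a framed embedded circle $\gamma \subset M$ produces $M \cup (\text{handle})$; when $\gamma$ is null-homotopic in $B$ (as it must be by the $3$-equivalence condition), one can isotope and use handle slides to convert this $2$-handle attachment into the trace of a surgery whose outgoing end is $M \# (S^2 \times S^2)$. Performing this for all $2$-handles, then dually for all $3$-handles on the $M'$ side, yields a homeomorphism $M \# k(S^2 \times S^2) \cong M' \# k'(S^2 \times S^2)$. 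The hard part I anticipate is precisely this conversion of a relative handle decomposition into an explicit stable homeomorphism, since it requires controlling framings and handle cancellations in the topological category (where we rely on Quinn's handle theory rather than smooth Morse theory).
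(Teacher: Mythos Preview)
The paper does not give its own proof of this statement; it is quoted as Kreck's Theorem~C from \cite{surgeryandduality} and used as background. So your proposal should be compared against Kreck's original argument, and indeed your outline follows it closely. There is, however, one genuine imprecision worth flagging.

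You propose to do interior surgery on the $\xi$-bordism $W^5$ until $\wt\nu_W\colon W\to B$ is a \emph{$3$-equivalence}, and you describe killing $\pi_2$ as a ``below the middle dimension'' step. In a $5$-manifold, surgery on $2$-spheres \emph{is} the middle-dimensional step, and it is exactly the step Kreck's modified surgery is designed to avoid: a surgery on an embedded $S^2\subset W^5$ removes $S^2\times D^3$ and glues in $D^3\times S^2$, so it introduces a new $\pi_2$-class even as it kills one, and there is no reason this process terminates. What Kreck actually does is make $\wt\nu_W$ only a \emph{$2$-equivalence}. Since $\wt\nu_M$ and $\wt\nu_{M'}$ are $2$-connected, this already forces the inclusions $M\hookrightarrow W$ and $M'\hookrightarrow W$ to be $\pi_1$-isomorphisms, which is all that is needed to trade away the $1$- and $4$-handles. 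Your subsequent steps (only $2$- and $3$-handles; attaching circles of $2$-handles are null-homotopic in $M$ because $\pi_1(M)\to\pi_1(W)$ is an isomorphism; hence each $2$-handle is a stabilisation) are correct and require only the $2$-equivalence.

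Two small further comments. First, no handle slides are needed to convert a $2$-handle along a null-homotopic circle into a stabilisation: simply isotope the circle to a small unknotted one; the framing then determines whether the result is $M\#(S^2\times S^2)$ or $M\#(S^2\,\wt\times\, S^2)$, and the existence of the $\xi$-structure on $W$ (together with $M\#(S^2\,\wt\times\,S^2)\cong M\#(S^2\times S^2)$ in the totally non-spin case) pins this down. Second, in the forward direction your bordism argument shows $[(M,\wt\nu_M)]=[(M\#(S^2\times S^2),\wt\nu)]$ in $\Omega_4(\xi)$ on the nose; the $\Aut(\xi)$-quotient enters only because a given homeomorphism $M\# k(S^2\times S^2)\cong M'\# k'(S^2\times S^2)$ need not carry the chosen normal $1$-smoothing of $M'$ to the one induced from $M$.
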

 
	We now give an overview over the possible normal $1$-types of $4$-manifolds. For a more detailed discussion of this, see for example \cite[Section~4]{KLPT15}.
	Let $M$ be a manifold with $\pi_1(M)=\pi$. If $\wt M$ is not spin, then the normal $1$-type of $M$ is $\xi:=\pr_2\colon B\pi_1(M)\times\BSTop\to \BSTop$ and $\Aut(\xi)=\Out(\pi_1(M))$. Two $4$-manifolds with normal $1$-type $\pr_2\colon B\pi\times\BSTop\to \BSTop$ are stably homeomorphic if and only if they have the same signature, the same Kirby--Siebenmann invariant and the images of their fundamental classes in $H_4(\pi;\Z)/\Out(\pi)$ agree.
 
    To define the normal $1$-type for almost spin manifolds we will make use of the following construction.

\begin{definition}\label{def:Normal_1_type}
		Let $X$ be a connected space and $w\in H^2(X;\Z/2)$. Then the fibration $\xi(X,w)$ is defined by the following homotopy pullback:
		\[\begin{tikzcd}
			B_{\xi(X,w)}\ar[r,"{p(X,w)}"]\ar[d,"{\xi(X,w)}"']&X\ar[d,"w"]\\			\BSTop\ar[r,"w_2"]&K(\Z/2,2)		
		\end{tikzcd}\] 
  Note that $\xi(X,w)$ is unique up to homotopy equivalence over $\BSTop$.
	\end{definition}
 
	If $M$ is almost spin, i.e.\ $w_2(\wt M)=0$, and has fundamental group $\pi$, then  there exists a unique $w\in H^2(B\pi;\Z/2)$ that pulls back to $w_2(M)$ under the canonical map $c\colon M\to B\pi$. 
In particular, in this case the normal $1$-type $\xi$ is given by $\xi = \xi(\pi,w):=\xi(B\pi,w)$.
	
	There is a James spectral sequence with $E^2$-page $E^2_{p,q}(X,w)=H_p(X;\Omega_q^{\topspin})$ converging to $\Omega_{p+q}(\xi(X,w))$, see \cite[Theorem~3.1.1]{teichnerthesis}. While $E^2_{p,q}(X,w)$ only depends on $X$, the differentials in the spectral sequence also depend on $w$. 
	The James spectral sequence induces a filtration
	\[8\Z=H_0(X;\Omega_4^\topspin)=F_{0,4}(X,w)\leq F_{2,2}(X,w)\leq F_{3,1}(X,w)\leq F_{4,0}(X,w)=\Omega_4(\xi(X,w)),\]
	where there is no $F_{1,3}(X,w)$-term since $\Omega_3^\topspin=0$.

 The second differential of the James spectral sequence was computed in \cite[Theorem 3.1.3]{teichnerthesis}. Namely, let $\Sq^2_w=\Sq^2+-\smile w$. Then
 \begin{itemize}
     \item For $p\leq 4$, the differential $d_2\colon H_p(\pi;\Omega^{\spin}_1)\to H_{p-2}(\pi;\Omega^{\spin}_2)$ is the hom-dual of $\Sq^2_w$.
     \item For $p\leq 5$, the differential $d_2\colon H_p(\pi;\Omega^{\spin}_0)\to H_{p-2}(\pi;\Omega^{\spin}_1)$ is the mod $2$ reduction composed with the hom-dual of $\Sq^2_w$.
\end{itemize}
For more information on the differentials, see also \cite[Corollary~5.7]{OP-MCG}.

 Let $M$ be a manifold with fundamental group $\pi=\pi_1(M)$, $w_2(\wt{M})=0$, some $w\in H^2(\pi,\Z/2)$ and a $1$-smoothing $\wt{\nu}_M$ into $\xi=\xi(\pi,w)$. We now recall the definition of primary, secondary and tertiary invariants ($\pri,\sec,\ter$) introduced in \cite{teichnerthesis}, see also \cite{KPT21}. Define $\pri(M)$ to be the image of $[M]$ under the edge homomorphism $F_{4,0}(\xi)\to E^{\infty}_{4,0}(\xi)$. If $\pri(M)=0$, we know that  $[M]\in F_{3,1}(\xi)$ and define $\sec(M)$ to be the image of $[M]$ under the homomorphism $F_{3,1}(\xi)\to E^{\infty}_{3,1}(\xi)$. Similarly if $\pri(M)=0=\sec(M)$ define $\ter(M)$ to be the image of $[M]$ under the homomorphism $F_{2,2}(\xi)\to E^{\infty}_{2,2}(\xi)$. In \cite{KPT21}, $\pri,\sec$ and $\ter$ are identified with algebraic data about manifolds in the spin case for many fundamental groups.

Now we recall some facts about the action of $\Aut(\xi)$ on the bordism group $\Omega_4(\xi)$.

\begin{remark}\label{rm:aboutXiActionOnOmega}
    For $\xi=\xi(\pi,w)$ denote by $\Aut(\xi)$ the group of self homotopy equivalences $B\to B$ over $\xi$. In particular, we have a homomorphism $\Aut(\xi) \to \Out(\pi)_w$ where the latter is the group of outer homomorphisms of $\pi$ fixing the class $w$. Note that $\Aut(\xi)$ acts on $\Omega_4(\xi)$. By \cite[Corollary 3.1.2]{teichnerthesis}, we know that the action of $\Aut(\xi)$ on every page $E^k_{*,*}$ factors through $\Out(\pi)_w$. However, this does not mean that the same is true for the action of $\Aut(\xi)$ on the filtration of $\Omega_*(\xi)$:
    \[\Omega_4^{\topspin}=F_{0,4}\leq  F_{2,2} \leq F_{3,1}\leq F_{4,0}=\Omega_4(\xi).\]
    
    As a corollary we see that, for any almost spin or spin manifold $M$ with a $1$-smoothing $\wt\nu_M$ to its normal $1$-type $\xi=\xi(\pi,w)$, we have that $\Aut(\xi)$ acts on $\pri(M)$  through $\Out(\pi)_w$. If $\pri(M)=0$ the same is true for $\sec(M)$ and finally if $\pri(M)=\sec(M)=0$ the same is true for $\ter(M)$.

    As another consequence, assume that the images of the kernels of $\kappa_2^h$ and $\kappa_2^s$ (see below for a definition)
    in $E^{\infty}_{2,2}$ differ  for some group $\pi$. Then after quotienting $F_{2,2}$ with $\Aut(\xi)$ they will still differ. This is because both $\kappa_2^h$ and $\kappa_2^s$ are natural with respect to group homomorphisms. 
    \end{remark}

\subsection{Methods from classical surgery}
\label{ss:surgery-classical}

	We now review some aspects of classical surgery theory. For details see \cite{Browder,Wall,luecksurgery}.
 
	\begin{definition}
		A degree one normal map is a pair $(f,\overline{f})$ consisting of a map $f\colon M \to N$ between manifolds such that $f_*[M] = [N],$ together with a $\Top$-bundle $\xi$ over $N$ and a lift $\overline{f}\colon \nu M\to \xi$ of $f$.
  We denote by $\cN(N)$ the group of bordism classes of degree one normal maps to $N$.
	\end{definition}

 The \emph{structure set} $\mathcal{S}^{h}(N)$ is defined as a set of manifolds with a homotopy equivalence to $N$ up to $h$-cobordism. Similarly we define $\mathcal{S}^{s}(N)$ using simple homotopy equivalences and $s$-cobordisms. For each $X \in \{h,s\}$, there is an obvious map 
 \[\eta\colon \mathcal{S}^X(N)\to \cN(N).\] 
	The group $\cN(N)$ of normal invariants can be identified with $[N,\G/\Top]$ (see \cite{luecksurgery}*{Theorem~7.10 and Remark~7.12}). There exists a $5$-connected map $\G/\Top \to K(\Z,4) \times K(\Z/2,2)$~\citelist{\cite{Madsen-Milgram}\cite{kirby-taylor}*{p.\ 397}}. For a closed $2$-manifold $P$, not necessarily orientable, the bijection $[P ,\G/\Top] \to\Z/2$ is called the Kervaire invariant. 
	Now let $N$ be
	an orientable 
	4-manifold. Then we have the following identifications
	\[\cN(N)\cong [N,G/\Top]\cong H^4(N;\Z)\oplus H^2(N;\Z/2)\cong \Z\oplus H^2(N;\Z/2).\]
	The map $\cN(N)$ to $\Z$ is given by $(\sigma(M)-\sigma(N))/8$. The map to $H^2(N;\Z/2)$ corresponds geometrically to assigning to $f$ the cohomology class given by representing a homology class by a $2$-dimensional submanifold $P$ of $N$, making $f$ transverse to $P$ and assigning to $[P]\in H_2(N;\Z/2)$ the classical Kervaire invariant of the $2$-dimensional normal map $f^{-1}(P)\to P$.
 
	\begin{definition}
		Following \cite{Davis-05}*{Definition~3.5}, we call the image $\kervaire(f)\in H^2(N;\Z/2)$ of $f\in\cN(N)$ the codimension 2
		Kervaire invariant.
	\end{definition}

	Let $\pi$ be the fundamental group of an $n$-manifold $N$.
	The surgery obstruction is a map $\theta^h\colon [N,\G/\Top]\to L^h_n(\Z\pi)$. In dimensions higher then $4$, the surgery obstruction of $f$ is trivial if and only if $f$ is normally bordant to a homotopy equivalence \cite{Wall,KirbySiebenmann}. In dimension $4$ this is only known for certain classes of fundamental groups \cite{Freedman82,Freedman-Quinn,KQ,FT}.
	
	We will later use the following two implications from high-dimensional surgery theory which continue to hold in dimension 4. Firstly, if $f\colon M\to N$ is a homotopy equivalence, we can view $f$ as a degree one normal map and then $\theta^h(f)=0$. Secondly, for each $x\in \ker(\theta^h)$, there exist $k\in\N$ and a homotopy equivalence $f\colon M\to N\#k(S^2\times S^2)$ such that $p_*(f)=x$, where $p\colon N\# k(S^2\times S^2)\to N$ is the collapse map. Here we use that the collapse map induces a map $p_*\colon \cN(N\#k(S^2\times S^2))\to \cN(N)$ sending $f$ to $p\circ f$. The second implication follows from the stable surgery exact sequence, see \cite{kirby-taylor}*{Theorem~4}.

We use the following proposition to relate $\theta^h$ to a map $\kappa_2^h\colon H_2(\pi;\Z/2)\to L_4^h(\Z\pi)$.
The following is \cite{Davis-05}*{Proposition~3.6} (see also \cite{HMTW88}*{Theorem~A} for the case where $\pi$ is finite).
 
	\begin{proposition}
 \label{prop:davis-theta}
		Let $\pi$ be a group, let $N$ be a $4$-manifold with fundamental group $\pi$ and let $c\colon N\to B\pi$ be the identity on $\pi_1$. There are homomorphisms
		\[\kappa_2^h \colon H_2(\pi;\Z/2)\to L_4^h(\Z\pi), \quad I_0\colon \Z\to L_4^h(\Z\pi)\]
		so that for any degree one normal map $f\colon M\to N$ between $4$-manifolds represented by $\wh f \colon N \to \G/\Top$, one has the following characteristic class formulae:
		\[I_0((\sigma(M)-\sigma(N))/8) + \kappa_2^h(c_*(\kervaire(f)\frown[N]))=\theta^h(\wh f).\]
	\end{proposition}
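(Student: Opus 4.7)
The plan is to realise $\theta^h(\wh f)$ as the image of $\wh f\in \cN(N) \cong [N,\G/\Top]$ under a suitable assembly map, and then decompose this assembly along the two summands of $\cN(N)\cong \Z\oplus H^2(N;\Z/2)$ recalled in the previous subsection. First I would use the fact that the surgery obstruction factors through Ranicki's assembly map as the composition
\[\cN(N)\xrightarrow{\ev} H_4(N;\L\langle 1\rangle)\xrightarrow{c_*} H_4(B\pi;\L\langle 1\rangle)\xrightarrow{A} L_4^h(\Z\pi),\]
where $\L\langle 1\rangle$ is the $1$-connective quadratic $L$-theory spectrum, with $\pi_2\cong\Z/2$ and $\pi_4\cong\Z$.

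Next, using the Atiyah--Hirzebruch spectral sequence for $\L\langle 1\rangle$-homology together with the $5$-connected map $\G/\Top\to K(\Z,4)\times K(\Z/2,2)$ recalled above, I would identify
\[H_4(N;\L\langle 1\rangle)\cong H_0(N;\Z)\oplus H_2(N;\Z/2)\cong \Z\oplus H_2(N;\Z/2),\]
with $\ev$ given by Poincar\'e duality from $H^4(N;\Z)\oplus H^2(N;\Z/2)$. Under these identifications, the first coordinate of $\ev(\wh f)$ is $(\sigma(M)-\sigma(N))/8$ (by the index-theoretic interpretation of the $H^4$-component of the normal invariant), and the second is $\kervaire(f)\cap[N]$. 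I would then define
\[I_0\colon \Z\cong H_0(\{\ast\};\Z)\to H_0(B\pi;\Z)\xrightarrow{A} L_4^h(\Z\pi)\qquad\text{and}\qquad \kappa_2^h\colon H_2(B\pi;\Z/2)\xrightarrow{A} L_4^h(\Z\pi)\]
as the two pieces of the assembly applied to the corresponding summand of $H_4(B\pi;\L\langle 1\rangle)$. Concretely $I_0(1)$ is the surgery obstruction of any simply-connected degree one normal map of signature $8$ (for instance, a map from the $E_8$-manifold to $S^4$), mapped into $L_4^h(\Z\pi)$ along $\{\ast\}\hookrightarrow B\pi$. The characteristic class formula then drops out from the naturality of $\ev$ under $c\colon N\to B\pi$ and the additivity of $A$.

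The main obstacle is that Ranicki's assembly-map description of $\theta^h$ is ordinarily stated in dimension $\geq 5$, so in dimension $4$ one must separately verify both the additivity of $\theta^h\colon \cN(N)\to L_4^h(\Z\pi)$ as a group homomorphism and the compatibility with the low-degree splitting of $\G/\Top$. The standard route around this is to cross with a simply-connected signature-$8$ $4$-manifold (e.g.\ the $E_8$-manifold, or $\CP^2\# \CP^2$ in the smooth category after using $\CP^2$) to move the computation into dimension $8$, invoke the higher-dimensional assembly-map formulae there, and then transfer back using the multiplicativity of surgery obstructions under products with closed simply-connected manifolds and the corresponding product formula for the $L$-theory assembly.
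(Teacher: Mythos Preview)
The paper does not prove this proposition; it is quoted from \cite{Davis-05}*{Proposition~3.6} (with \cite{HMTW88}*{Theorem~A} for the finite case). Your approach via the assembly map is exactly the standard route taken in those references and is correct in outline: one factors $\theta^h$ as $[N,\G/\Top]\to H_4(N;\L\langle 1\rangle)\to H_4(B\pi;\L\langle 1\rangle)\to L_4^h(\Z\pi)$, splits the source using the $5$-connected map $\G/\Top\to K(\Z,4)\times K(\Z/2,2)$, and defines $I_0$ and $\kappa_2^h$ as the resulting components of the assembly.

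Two small corrections to your discussion of the dimension-$4$ issue. First, the standard crossing trick uses $\CP^2$ (signature $1$), not a signature-$8$ manifold: the periodicity isomorphism $L_4^h(\Z\pi)\cong L_8^h(\Z\pi)$ is implemented by product with $\CP^2$, and $\theta^h(f\times\id_{\CP^2})$ corresponds to $\theta^h(f)$ under it. (Also, $\CP^2\#\CP^2$ has signature $2$, not $8$.) Second, and more to the point, the crossing trick is not actually needed for the \emph{formula} itself. The factorisation of $\theta^h$ through assembly is a spectrum-level statement valid in all dimensions; what fails in dimension $4$ is only the exactness of the surgery sequence (i.e.\ the converse, that vanishing of $\theta^h$ yields a homotopy equivalence without stabilising). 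Since the proposition only asserts the characteristic-class formula for $\theta^h$, additivity and the assembly description go through directly.
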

 
Similarly, we have the simple surgery obstruction $\theta^s\colon[N,\G/\Top]\to L^s_4(\Z\pi)$, which, in higher dimensions, measures whether a map is normally bordant to a simple homotopy equivalence. The map $\kappa_2^h$ factors as
\[
\kappa_2^h \colon H_2(\pi;\Z/2)\xrightarrow{\kappa_2^s}L_4^s(\Z\pi)\to L_4^h(\Z\pi).
\]
As for $\theta^s$, we have $I_0((\sigma(M)-\sigma(N))/8) + \kappa_2^s(c_*(\kervaire(f)\frown[N]))=\theta^s(\wh f)$. 
When the group $\pi$ is not clear from the context, we will write $\kappa_2^h = \kappa_2^h(\pi)$ and $\kappa_2^s = \kappa_2^s(\pi)$.

The following theorem of Davis will be the key ingredient in our proof of \cref{thmx:main-table} since it allows us to relate $\kappa_2^h$ to Kreck's modified surgery.

\begin{theorem}[{\cite[Theorem~3.12]{Davis-05}}]
\label{thm:davis}
Let $f\colon M\to N$ be a $2$-connected degree one normal map between closed, oriented, almost spin, smooth $4$-manifolds with the same signature and let $p(N,w_2(N)) \colon B_{\xi(N,w_2(N))} \to N$ be the map defined in \cref{def:Normal_1_type}. Then there are normal $1$-smoothings $\wt\nu_N$ and $\wt\nu_M$ in $\xi:=\xi(N,w_2(N))$ 
such that
\begin{clist}{(i)}
    \item $p(N,w_2(N))\circ\wt\nu_N=\id_N$,
    \item 
    $p(N,w_2(N))\circ\wt\nu_M=f$,
    \item $\alpha:=[M,\wt\nu_M]-[N,\wt\nu_N]$ is in the filtration subgroup $F_{2,2}(N,w_2(N))$ of the James spectral sequence, and
    \item $\alpha$ maps to $\kervaire(f)\frown [N]$ in $E^\infty_{2,2}(N,w_2(N))=H_2(N;\Z/2)$.
\end{clist}
An analogous statement is true in the topological category provided that, in addition, $\ks(M) = \ks(N)$.
\end{theorem}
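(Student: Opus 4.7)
The plan is to construct the normal $1$-smoothings directly from the normal map data and then locate $\alpha$ in the James filtration of $\Omega_4(\xi)$ using the geometric descriptions of the $E^2$-page. First, I would choose any $2$-connected lift $\wt\nu_N \colon N \to B_\xi$ of the stable normal bundle $\nu_N$; such a lift exists because $\xi = \xi(N,w_2(N))$ is by construction the normal $1$-type of $N$. Since $f$ is a degree one normal map, its bundle data yields a stable identification $\nu_M \simeq f^*\nu_N$, so the composite $\wt\nu_M := \wt\nu_N \circ f$ is a lift of $\nu_M$ along $\xi$, and it is $2$-connected as the composite of two $2$-connected maps. Thus $\wt\nu_M$ is a normal $1$-smoothing of $M$ over $f$, and $\alpha := [M,\wt\nu_M] - [N,\wt\nu_N]$ is a well-defined element of $\Omega_4(\xi)$.

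Next I would verify $\alpha \in F_{2,2}$ by showing that the primary and secondary invariants vanish. The primary invariant $\pri(\alpha)$ lives in $E^\infty_{4,0} \subseteq H_4(\pi;\Z)$; since the classifying map for $M$ may be taken to be $c_N \circ f$ and $f$ has degree one, the pushforwards of the fundamental classes agree, so $\pri(\alpha) = 0$. The secondary invariant $\sec(\alpha)$ lives in a subquotient of $E^2_{3,1} = H_3(\pi;\Omega_1^{\topspin}) = H_3(\pi;\Z/2)$; I would argue that it is controlled by $3$-skeletal data of the lift $\wt\nu_M$, which coincides with that of $\wt\nu_N$ by construction of $\wt\nu_M$ via $f$, forcing $\sec(\alpha) = 0$.

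The heart of the proof, and the main obstacle, is to identify $\ter(\alpha) \in E^\infty_{2,2} \subseteq H_2(\pi;\Z/2)$ with $c_*(\kervaire(f) \cap [N])$. Here I would use the geometric description of the Kervaire invariant recalled in \cref{ss:surgery-classical}: given a surface $\Sigma \subset N$ Poincar\'e dual to $\kervaire(f)$ and $f$ made transverse to $\Sigma$, the classical Arf--Kervaire invariant of the $2$-dimensional normal map $f^{-1}(\Sigma) \to \Sigma$ equals $\langle \kervaire(f), [\Sigma]\rangle$. Via the geometric interpretation of $E^2_{2,2} = H_2(\pi;\Omega_2^{\topspin}) = H_2(\pi;\Z/2)$ by spin surfaces mapping to $B\pi$, a change of spin structure along such a surface modifies the $\xi$-bordism class by precisely the expected element. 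The delicate point is to match these two geometric descriptions, relating the surgery-type modification of $(M,\wt\nu_M)$ towards $(N,\wt\nu_N)$ along transverse preimages of surfaces to the geometric representative of the $E^2_{2,2}$-class; the hypothesis $\sigma(M) = \sigma(N)$ is then used to rule out an additional $F_{0,4}$ contribution.

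For the topological statement, the same argument applies using topological transversality and the topological analogues of the normal invariant and $G/\Top$ descriptions from \cref{ss:surgery-classical}. The extra hypothesis $\ks(M) = \ks(N)$ handles the $\Z/2$ difference between $\Omega_4^{\topspin}$ and $\Omega_4^{\spin}$, which sits inside $F_{0,4}$ and is detected precisely by the Kirby--Siebenmann invariant.
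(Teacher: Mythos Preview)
The paper does not prove this theorem: it is quoted from \cite[Theorem~3.12]{Davis-05} and then used as a black box, most notably in the proof of \cref{thm:davis-general}, which reduces to \cref{thm:davis} by passing to $f\#\ol f\colon M\#\ol M\to N\#\ol N$ so as to force the signature and Kirby--Siebenmann conditions. So there is no paper-side argument to compare against, and I comment on your proposal on its own terms.

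Your construction of $\wt\nu_M$ contains a genuine error. The composite $\wt\nu_N\circ f$ projects in $\BSTop$ to $\nu_N\circ f$, which classifies $f^*\nu_N$, not $\nu_M$. The bundle data of a degree one normal map only gives $\nu_M\cong f^*\zeta$ for a bundle $\zeta$ over $N$ that is \emph{fibre homotopy equivalent} to $\nu_N$; the difference $\zeta-\nu_N\in[N,\G/\Top]$ is precisely the normal invariant $\eta(f)$, whose $H^2$-component is the very class $\kervaire(f)$ you are trying to detect. Hence $(M,\wt\nu_N\circ f)$ is not even a legitimate element of $\Omega_4(\xi)$, and your downstream argument for $\sec(\alpha)=0$ (``the $3$-skeletal data coincide by construction'') collapses with it. A genuine normal $1$-smoothing of $M$ over $f$ does exist, since $w_2(\zeta)=w_2(\nu_N)$ (Stiefel--Whitney classes being invariants of the underlying spherical fibration), but it is not canonical: the choices form an $H^1(M;\Z/2)$-torsor, and one must argue separately that some choice kills $\sec(\alpha)$ --- compare the uniqueness part of the paper's proof of \cref{thm:davis-general}, where exactly this torsor is analysed via \cite[Proposition~3.2.4]{teichnerthesis}.

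Your outline for identifying $\ter(\alpha)$ with $\kervaire(f)\cap[N]$ via transverse surfaces and $2$-dimensional Arf invariants points in the right direction, but this is where the real content of Davis's argument lies, and your sketch does not yet supply it.
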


We note that (i) and (ii) of \cref{thm:davis} can be deduced from the proof in \cite{Davis-05}.

\begin{remark}
Let $f\colon M\to N$ be a degree one normal map which is an isomorphism of fundamental groups. Then $f$ is $2$-connected: for $x\in \pi_2(N)\cong H_2(N;\Z\pi)$ the element $f^*(PD^{-1}(x))\frown [N]$ is a preimage of $x$ under $f_*$. 
\end{remark}

 We note that in \cref{thm:davis} $E^{\infty}_{2,2,}=E^2_{2,2}=H_2(N;\Z/2)$ because all differentials with target $E^k_{2,2}$ vanish for trivial reasons unless $k=2$ in which case it vanishes using the calculation $d_2=(\Sq^2+-\smile w_2(N))^*$.
 
	The statement of the above theorem is also true without assuming that $M$ and $N$ have the same signature and Kirby--Siebenmann invariant and that the normal $1$-smoothing of $M$ is unique given a fixed normal $1$-smoothing on $N$, i.e.\ we show the following.
 
	\begin{theorem}
		\label{thm:davis-general}
		Let $f\colon M\to N$ be a $2$-connected degree one normal map between closed, oriented, almost spin $4$-manifolds. Let $\wt\nu_N$ be a normal $1$-smoothing on $N$ in $\xi=\xi(N,w_2(N))$ with $p(N,w_2(N))\circ \wt\nu_N=\id_N$. Then there is a unique normal $1$-smoothing $\wt\nu_M$ with $p(N,w_2(N))\circ \wt\nu_M=f$ such that $\alpha:=[M,\wt\nu_M]-[N,\wt\nu_N]$ is in the filtration subgroup $F_{2,2}(N,w_2(N))$ of the James spectral sequence and $\alpha$ maps to $\kervaire(f)\frown [N]$ in $E^\infty_{2,2}(N,w_2(N))=H_2(N;\Z/2)$.	
	\end{theorem}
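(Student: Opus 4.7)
The plan is to extend \cref{thm:davis} along three axes: removing the hypothesis on signature (and Kirby--Siebenmann invariant in the topological category), allowing $\wt\nu_N$ to be prescribed, and establishing uniqueness of $\wt\nu_M$ over $f$.

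\textbf{Uniqueness.} The set of $1$-smoothings of $M$ over $f$ is an $H^1(M;\Z/2)$-torsor. Twisting $\wt\nu_M$ by $b\in H^1(M;\Z/2)$ changes $[M,\wt\nu_M]$ by a class $\epsilon(b)\in F_{3,1}$ whose image in $E^\infty_{3,1}\subseteq H_3(N;\Z/2)$ is $f_*(b\cap[M])$. Since $f$ is $2$-connected of degree one, the projection formula $f_*(f^*\alpha\cap[M])=\alpha\cap[N]$ together with the fact that $f^*$ is an isomorphism on $H^1(-;\Z/2)$ implies that $f_*\circ(\cap[M])\colon H^1(M;\Z/2)\to H_3(N;\Z/2)$ is an isomorphism. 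If two $1$-smoothings $\wt\nu_M$ and $\wt\nu_M\cdot b$ both satisfy the conclusion, then $\epsilon(b)\in F_{2,2}$ has vanishing image in $E^\infty_{3,1}$, forcing $b=0$.

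\textbf{Existence.} I reduce to \cref{thm:davis} via connect sum. Since $(\sigma(M)-\sigma(N))/8$ is the $\Z$-component of the normal invariant of $f$ in $\cN(N)$, we have $\sigma(M)\equiv\sigma(N)\pmod 8$, and I choose a simply-connected topological spin $4$-manifold $L$ with $\sigma(L)=\sigma(N)-\sigma(M)$ and $\ks(L)=\ks(N)-\ks(M)$, which exists by Freedman's classification (assuming the parity compatibility between signature and Kirby--Siebenmann). Set $M':=M\# L$, preserving the normal $1$-type of $M$ since $L$ is simply-connected spin, and let $f'\colon M'\to N$ be the composition of the collapse $M'\to M$ with $f$. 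Then $f'$ is a $2$-connected degree one normal map with $\sigma(M')=\sigma(N)$, $\ks(M')=\ks(N)$, and $\kervaire(f')=\kervaire(f)$, so \cref{thm:davis} applies and produces a pair $(\wt\nu_{M'}^0,\wt\nu_N^0)$. Using the connect-sum decomposition $[M',\wt\nu_{M'}^0]=[M,\wt\nu_M^0]+[L,\wt\nu_L]$ (for a naturally induced $\wt\nu_M^0$ over $f$) together with $[L,\wt\nu_L]\in F_{0,4}\subseteq F_{2,2}$ trivial in $E^\infty_{2,2}$ yields the desired conclusion for $f$ with $\wt\nu_N^0$. Finally, the prescribed $\wt\nu_N$ differs from $\wt\nu_N^0$ by some $a\in H^1(N;\Z/2)$, and I set $\wt\nu_M:=\wt\nu_M^0\cdot f^*a$; naturality of the twisting action on $\Omega_4(\xi)$ ensures $\alpha=[M,\wt\nu_M]-[N,\wt\nu_N]=[M,\wt\nu_M^0]-[N,\wt\nu_N^0]$, so the conclusion transfers.

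The main obstacle is the parity condition $(\sigma(N)-\sigma(M))/8\equiv\ks(N)-\ks(M)\pmod 2$ needed for the existence of the spin $L$ in the existence step, which need not hold a priori for almost spin (non-spin) manifolds related by a degree one normal map. This can be circumvented by inspecting Davis's proof of \cref{thm:davis} directly: the signature/KS hypothesis is used only to force certain correction terms to land in $F_{0,4}\subseteq F_{2,2}$, so relaxing it preserves the image in $E^\infty_{2,2}$.
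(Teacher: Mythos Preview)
Your uniqueness argument and the reduction to the prescribed $\wt\nu_N$ via the $H^1(N;\Z/2)$-action are essentially the paper's.

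The existence step, however, has a real gap. A simply-connected topological spin $4$-manifold $L$ satisfies $\ks(L)\equiv\sigma(L)/8\pmod 2$, so your correction manifold exists only when $\ks(N)-\ks(M)\equiv(\sigma(N)-\sigma(M))/8\pmod 2$. But the Kirby--Siebenmann formula \cite{KirbySiebenmann}*{p.~329} gives
\[
\ks(M)-\ks(N)\equiv(\sigma(M)-\sigma(N))/8+w_2(N)\cap(\kervaire(f)\cap[N])\pmod 2,
\]
so your connect-sum reduction fails precisely when $w_2(N)\cap(\kervaire(f)\cap[N])\neq 0$, which certainly occurs for genuinely almost spin $N$. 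Your fallback---``inspect Davis's proof and observe the hypothesis only contributes $F_{0,4}$ corrections''---is an assertion, not an argument: you do not carry out the inspection, and it is not evident that the $\ks$ hypothesis in the topological version of \cref{thm:davis} plays such a mild role.

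The paper sidesteps the parity issue with a doubling trick: apply \cref{thm:davis} to $f\#\overline f\colon M\#\overline M\to N\#\overline N$, where signature and Kirby--Siebenmann invariant vanish on both sides automatically. One then passes to $\xi'':=\xi(N\vee\overline N,(w_2(N),w_2(\overline N)))$, writes the resulting class $\beta$ as the sum of contributions $\alpha$ from the $N$-summand and $\overline\alpha$ from the $\overline N$-summand, and uses that the natural map $\Omega_4(\xi(N,w_2(N)))\to\Omega_4(\xi'')$ is injective on $E^\infty_{3,1}$ and $E^\infty_{2,2}$ to read off that $\alpha\in F_{2,2}(N,w_2(N))$ with image $\kervaire(f)\cap[N]$.
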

 
\begin{proof}
    We can assume that there exists a point $n\in N$ with a neighbourhood $U$ such that $f|_{f^{-1}(U)}\colon f^{-1}(U)\to U$ is a homeomorphism.
	Consider the degree one normal map $f\#\ol{f}\colon M\#\ol{M}\to N\#\ol{N}$, where the connected sum is taking in $f^{-1}(U)$ and $U$, and $\overline{M}$ and $\overline{N}$ denote the manifolds $M$ and $N$ with opposite orientation, respectively. Now $\sign(M\#\ol{M})=0=\sign(N\#\ol{N})$ and $\ks(M\#\ol{M})=0=\ks(N\#\ol{N})$. Hence by \cref{thm:davis} there are normal $1$-smoothings $\wt{\nu}_1$  of $M\#\ol{M}$ and $\wt{\nu}_2$ of $N\#\ol{N}$ in $\xi':=\xi(N\#\ol{N},w_2(\nu(N\#\ol{N})))$ such that 
\[
     p(N\#\ol{N},w_2(N\#\ol{N}))\circ \wt{\nu}_1=f\#\ol{f},\quad
     p(N\#\ol{N},w_2(N\#\ol{N}))\circ \wt{\nu}_2=\id_{N\#\ol{N}},
\] 
$\beta:=[M\#\ol{M},\wt{\nu}_1]-[N\#\ol{N},\wt{\nu}_2]$ is in the filtration subgroup $F_{2,2}(\xi')$, and $\beta$ maps to
\begin{align*} \kervaire(f\#\ol{f})\frown[N\#\ol{N}] &=(\kervaire(f)\frown[N],\kervaire(\ol{f})\frown\ol{N}) \\
& \in H_2(N\#\ol{N};\Z/2)\cong H_2(N;\Z/2)\oplus H_2(\ol{N};\Z/2).\end{align*}

Let $\xi'':=\xi(N^{(3)},w_2(N))$ and $\ol{\xi}'':=\xi(\ol{N}^{(3)},w_2(\ol{N}))$, where we choose a handle structure of $N$ with a single top handle, i.e.\ $N^{(3)}=N\setminus D^4$. There are natural maps 
\[ 
i_3\colon \Omega_4(\xi'')\to \Omega_4(\xi(N,w_2(N)),\quad 
\ol{i_3}\colon \Omega_4(\ol{\xi}'')\to \Omega_4(\xi(\ol{N},w_2(\ol{N})),\quad
j\colon \Omega_4(\xi'')\oplus\Omega_4(\ol{\xi}'')\to\Omega_4(\xi').
\]
Let $\alpha:=[M,\wt{\nu}_1|_M]-[N,\wt{\nu}_2|_{N}]\in \Omega_4(\xi(N,w_2(N)))$, $\ol{\alpha}:=[\ol{M},\wt{\nu}_1|_{\ol{M}}]-[\ol{N},\wt{\nu}_2|_{\ol{N}}]\in \Omega_4(\xi(\ol{N},w_2(\ol{N})))$, where we use that $\ol{\nu_i}$ is trivial over the separating sphere. 

The class $\alpha$ is represented by $[M\# \ol{N}, \wt{\nu}_1|_{\ol{M}}\#-\wt{\nu}_2|_{N}]$, where the connected sum is taking in a neighborhood around $n$ such that $M\#\ol N$ maps to $N^{(3)}$. 
Hence there is a preimage of $(\alpha,\ol{\alpha})$ in $\Omega_4(\xi'')\oplus\Omega_4(\ol{\xi}'')$ under the map $i_3\oplus \ol{i_3}$.
By construction, this maps to $\beta\in \Omega_4(\xi')$ since both classes are represented by $[M\#\ol M\# \ol N\# N,\wt\nu_1-\wt\nu_2]$. Since $H_5(N\#\ol{N};\Z)=0$ and $H_3(N^{(3)};\Z/2)\to H_3(N\#\ol{N};\Z/2)$ is injective, it follows that $\alpha$ is in the filtration subgroup $F_{2,2}(N,w_2(N))$ of the James spectral sequence. Furthermore, $i_3\oplus\ol{i_3}$ and $j$ restrict to isomorphims on the $F_{2,2}$ filtration steps. Since $\beta$ maps to $\kervaire(f\#\ol{f})\frown[N\#\ol{N}]$, it follows that $\alpha$ maps to $\kervaire(f)\frown [N]$ in $E^\infty_{2,2}(N,w_2(N))=H_2(N;\Z/2)$.

Finally we note that the group $\Aut(\xi)$ has a transitive action on the $\xi$-structures on $N$ and so we can assume that $\wt{\nu}_2|_{N}=\tilde{\nu}_N$. 
For the uniqueness of $\wt{\nu}_M$ we argue as follows. Different normal $1$-smoothings $v_M$ of $M$ in $\xi$ with $p(N,w_2(N))\circ v_M=f$ are in 1-to-1 correspondence with null-homotopies of the map $M\xrightarrow{f} N\xrightarrow{w_2} K(\Z/2,2)$ which are a torsor over $H^1(M;\Z/2)$. This can be viewed as changing the topological spin structure on $f^*\nu_N\oplus\nu_M$. Since $f$ is an isomorphism on $\pi_1$, the action is the same as the action of $H^1(N;\Z/2)$ on $\Omega_4(\xi)$. The action of $x\in H^1(N;\Z/2)$ changes the image of $v_M$ in $E_{3,1}^\infty=H_3(N;\Z/2)$ by $x\frown [N]$ by \cite[Proposition~3.2.4]{teichnerthesis}. Hence there can be only be one normal $1$-smoothing with trivial image in $E_{3,1}^\infty$. It follows that $\wt{\nu}_M$ is unique as claimed.
	\end{proof}
 
	\begin{corollary}
		\label{cor:davis}
		Let $N$ be a closed, oriented, almost spin $4$-manifold, $\xi:=\xi(N,w_2(N))$ and $\wt\nu_N$ a normal $1$-smoothing. Let $\pi:=\pi_1 N$ and let $c\colon N\to B\pi$ be the identity on $\pi_1$.
		
		There is an isomorphism 
		\[V\colon \cN(N)\to F_{2,2}(N,w_2(N))\] sending $f\colon M\to N$ to $[M,\wt\nu_M]-[N,\wt\nu_N]$. Then the surgery obstruction map factors as
		\[\cN(N)\xrightarrow{V}F_{2,2}(N,w_2(N))\stackrel{(\sigma/8, \ter)}{\cong}\Z\oplus H_2(N;\Z/2)\xrightarrow{(\id,c_*)}\Z\oplus H_2(\pi;\Z/2)\xrightarrow{I_0+\kappa^s_2}L^s_4(\Z\pi)\to L_4^h(\Z\pi).\]
	\end{corollary}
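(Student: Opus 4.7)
The plan is to define $V$ using \cref{thm:davis-general}, verify it is a well-defined group isomorphism onto $F_{2,2}(N,w_2(N))$, and then combine with \cref{prop:davis-theta} to deduce the factorisation of the surgery obstruction.

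First I would define $V$. Given $[f]\in \cN(N)$ represented by $f\colon M\to N$, I would apply surgeries below the middle dimension (available in dimension $4$) to replace $f$ within its normal bordism class by a $2$-connected degree one normal map. \cref{thm:davis-general} then produces a unique normal $1$-smoothing $\wt\nu_M$ over $f$ with $[M,\wt\nu_M]-[N,\wt\nu_N]\in F_{2,2}(N,w_2(N))$, and I set $V([f]) := [M,\wt\nu_M]-[N,\wt\nu_N]$. For well-definedness on normal bordism classes, given a normal bordism between two $2$-connected representatives I would perform interior surgery to make it $2$-connected as well; the uniqueness clause of \cref{thm:davis-general} applied at the boundary then forces the boundary $1$-smoothings to agree with those produced by the construction, so the bordism lifts to a $\xi$-bordism witnessing equality in $\Omega_4(\xi)$.

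To show $V$ is an isomorphism, I would identify both sides with $\Z\oplus H_2(N;\Z/2)$. On the left, the $5$-connected map $\G/\Top\to K(\Z,4)\times K(\Z/2,2)$ gives $\cN(N)\cong \Z\oplus H^2(N;\Z/2)$ via the signature difference $(\sigma(M)-\sigma(N))/8$ and $\kervaire$; cap with $[N]$ then yields the target $\Z\oplus H_2(N;\Z/2)$. On the right, the James spectral sequence gives a short exact sequence
\[
0\to F_{0,4}\to F_{2,2}\to E^\infty_{2,2}\to 0.
\]
All differentials into or out of $E^r_{2,2}$ for $r\ge 3$ vanish by dimension reasons (since $\dim N=4$ and $\Omega^\topspin_3=0$). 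The remaining $d_2\colon E^2_{4,1}=H_4(N;\Z/2)\to E^2_{2,2}=H_2(N;\Z/2)$ is hom-dual to $\Sq^2_{w_2(N)}\colon H^2(N;\Z/2)\to H^4(N;\Z/2)$, which vanishes by Wu's formula since $\Sq^2(x)=w_2(N)\cup x$ for any class on a $4$-manifold. Together with $F_{0,4}=\Omega_4^\topspin\cong\Z$, the pair $(\sigma/8,\ter)$ splits this sequence, yielding $F_{2,2}\cong \Z\oplus H_2(N;\Z/2)$. By construction and \cref{thm:davis-general}, $V$ acts as the identity on signatures and as Poincar\'e duality $\kervaire(f)\mapsto \kervaire(f)\cap [N]$ on the other component, hence is an isomorphism.

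Finally, the factorisation of the surgery obstruction follows by combining the above with \cref{prop:davis-theta}. Under the identifications just described, $V$ sends $f$ to $((\sigma(M)-\sigma(N))/8,\ \kervaire(f)\cap[N])$; pushing along $(\id,c_*)$, applying $I_0+\kappa_2^s$, and post-composing with $L^s_4(\Z\pi)\to L^h_4(\Z\pi)$ produces $I_0((\sigma(M)-\sigma(N))/8)+\kappa_2^h(c_*(\kervaire(f)\cap [N]))=\theta^h(\wh f)$ by \cref{prop:davis-theta} and the noted factorisation $\kappa_2^h=(L^s_4\to L^h_4)\circ\kappa_2^s$. The most delicate step is the identification $E^\infty_{2,2}=H_2(N;\Z/2)$ via the Wu-formula vanishing of $d_2$; once this is in hand, the remaining verifications reduce to matching invariants via \cref{thm:davis-general} and \cref{prop:davis-theta}.
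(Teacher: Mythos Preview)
Your proposal is correct and follows essentially the same approach as the paper: apply \cref{thm:davis-general} to define $V$, identify both sides with $\Z\oplus H_2(N;\Z/2)$ (the Wu-formula vanishing of the incoming $d_2$ you supply is exactly what the paper records just before \cref{thm:davis-general}), and then read off the factorisation from \cref{prop:davis-theta}. One simplification: your $\xi$-bordism argument for well-definedness is more than needed, since \cref{thm:davis-general} already shows that $(\sigma/8,\ter)\circ V(f)$ equals $\big((\sigma(M)-\sigma(N))/8,\ \kervaire(f)\cap[N]\big)$, and the latter depends only on the normal bordism class of $f$; this is the route the paper takes.
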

 
	\begin{proof}
		Using surgery, any normal bordism class has a $2$-connected representative. Hence the map $V$ is well-defined by \cref{thm:davis-general}. Using that $\cN(N)\cong \Z\oplus H^2(N;\Z/2)$ given by sending $f\colon M\to N$ to $(\tfrac{\sigma(M)-\sigma(N)}{8},\kervaire(f))$, we see from \cref{thm:davis-general} that $V$ is indeed an isomorphism given by the identity on $\Z$ and Poincar\'e duality on $H^2(N;\Z/2)$. Since the surgery obstruction map factors as
		\[\cN(N)\cong \Z\oplus H^2(N;\Z/2)\stackrel{\id\oplus \text{\normalfont PD}}{\cong}\Z\oplus H_2(N;\Z/2)\xrightarrow{I_0+\kappa^s_2}L^s_4(\Z\pi)\to L_4^h(\Z\pi),\]
		it also factors through $V$ as claimed.
	\end{proof}

    \subsection{The transfer in $L$-theory}
    \label{subsec:transfer_L}

    The following will be used in \cref{s:stablerigidity}.
    For a subgroup $H\leq G$ of finite index, let $\tr\colon H_2(G;\Z/2)\to H_2(H;\Z/2)$ be the transfer map.
    
    \begin{proposition}
    \label{prop:transfer-l-theory}
    Fix a choice of decoration $X \in \{h,s\}$.
        Let $H\leq G$ be a subgroup of finite index. Then there exists a transfer map
        \[p^*\colon L_4^X(\Z G)\to L_4^X(\Z H)\]
        such that for all $x\in H_2(G;\Z/2)$ we have $p^*(\kappa_2^X(x))=\kappa_2^X(\tr(x))$.
    \end{proposition}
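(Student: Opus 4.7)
The plan is to construct $p^*$ as the standard restriction-of-scalars transfer in $L$-theory and then verify compatibility with $\kappa_2^X$ by realising $\kappa_2^X(x)$ as a geometric surgery obstruction and pulling back along the finite cover corresponding to $H \le G$.

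For the construction, since $H$ has finite index in $G$, the group ring $\Z G$ is free of rank $[G:H]$ as a left $\Z H$-module via a choice of coset representatives. Restriction of scalars along $\Z H \hookrightarrow \Z G$ therefore sends nonsingular quadratic forms and their automorphisms over $\Z G$ to nonsingular quadratic forms and automorphisms over $\Z H$, with Whitehead torsions transforming naturally, and so induces the desired transfer $p^* \colon L_4^X(\Z G) \to L_4^X(\Z H)$ for both decorations $X \in \{h,s\}$; this is the classical transfer in $L$-theory from Ranicki's algebraic surgery theory.

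For compatibility, choose a closed oriented $4$-manifold $N$ with $\pi_1 N = G$ whose classifying map $c \colon N \to BG$ is surjective on $H_2(-;\Z/2)$, and lift $x$ to $\tilde{x} \in H_2(N;\Z/2)$. Using the identification $\cN(N) \cong \Z \oplus H^2(N;\Z/2)$ together with Poincar\'e duality, pick a degree one normal map $f \colon M \to N$ with $\sigma(M) = \sigma(N)$ and $c_*(\kervaire(f) \cap [N]) = x$; by \cref{prop:davis-theta} we then have $\theta^X(f) = \kappa_2^X(x)$. Let $p \colon \wh N \to N$ be the finite cover classified by $H$ and pull $f$ back to a degree one normal map $\wh f \colon \wh M \to \wh N$ with $\pi_1 \wh N = H$. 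The algebraic transfer $p^*$ agrees with this geometric covering construction on surgery obstructions, so $\theta^X(\wh f) = p^*(\theta^X(f))$. Multiplicativity of the signature under finite covers gives $\sigma(\wh M) = [G:H]\sigma(M) = [G:H]\sigma(N) = \sigma(\wh N)$, so the $I_0$ term vanishes when \cref{prop:davis-theta} is applied to $\wh f$; combined with the naturality $\kervaire(\wh f) = p^*(\kervaire(f))$, the projection formula $p^*(\kervaire(f)) \cap [\wh N] = p^!(\kervaire(f) \cap [N])$ for the homology transfer $p^!$, and naturality of transfers for the square of finite covers $\wh N \to N$, $BH \to BG$, this yields
\[p^*(\kappa_2^X(x)) = p^*(\theta^X(f)) = \theta^X(\wh f) = \kappa_2^X(\wh c_*(p^!(\kervaire(f) \cap [N]))) = \kappa_2^X(\tr(x)),\]
where $\wh c \colon \wh N \to BH$ is the lift of $c \circ p$.

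The main obstacle is the identity $\theta^X(\wh f) = p^*(\theta^X(f))$, i.e.\ the fact that the algebraic restriction-of-scalars transfer agrees with the geometric transfer of surgery obstructions along finite covers. This is classical but would need to be justified directly by identifying the $\Z H$-valued surgery kernel form of $\wh f$ with the $\Z H$-restriction of the $\Z G$-valued surgery kernel form of $f$, referring to Ranicki's algebraic theory of surgery for the packaging of this identification on the level of $L$-groups with either decoration.
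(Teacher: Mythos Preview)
Your proposal is correct and follows essentially the same strategy as the paper: realise $\kappa_2^X(x)$ as a geometric surgery obstruction $\theta^X(f)$, pass to the finite cover, and apply the Davis formula upstairs using $\kervaire(\wh f)=p^*(\kervaire(f))$ together with naturality of the homology transfer. The paper handles the step you flag as the main obstacle, namely $\theta^X(\wh f)=p^*(\theta^X(f))$, by citing L\"uck--Ranicki \cite[Theorem~6.2 and Remark~9.7]{Lueck-Ranicki-Transfer} rather than sketching the restriction-of-scalars construction directly; otherwise the arguments match.
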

    
    \begin{proof}
        For each $X \in \{h,s\}$, the transfer map $p^*$ is constructed in \cite{Lueck-Ranicki-Transfer} and we will use the following property. Let $f\colon M\to N$ be a degree one normal map with $\pi_1(N)=G$. Let $g\colon \ol{M}\to \ol{N}$ be the covering of $f$ with $\pi_1(\wt N)=H$. By \cite[Theorem~6.2]{Lueck-Ranicki-Transfer} (see also \cite[Equation 15.470]{luecksurgery}), we have $\theta^X(g)=p^*(\theta^X(f))$ for $X=h$. This also holds for $X=s$ by \cite[Remark 9.7]{Lueck-Ranicki-Transfer}.

        For any manifold $N$ with a $\pi_1$-isomorphism $c\colon N\to B G$ one can show from the Serre spectral sequence for $\wt{M}\to M\to BG$ that the map $H_2(N;\Z/2)\to H_2(G;\Z/2)$ is surjective. Then for any $x\in H_2(G;\Z/2)$ there exists a degree one normal map $f\colon M\to N$ such that $\kappa_2^X(x)=\theta^X(f)$, i.e.\ such that $x=c_*(\kervaire(f)\frown[N])$. Then
        \[p^*\kappa_2^X(c_*(\kervaire(f)\frown[N]))=p^*\theta^X(f)=\theta^X(g)=\kappa_2^X(c'_*(\kervaire(g)\frown[\ol{N}])),\]
    where $c'\colon \ol{N}\to BH$ is the identity on fundamental groups. It remains to show that 
    \[ \tr(c_*(\kervaire(f)\frown[N]))=c'_*(\kervaire(g)\frown[\ol N]).\] 
    By naturality of the transfer map, we have to show $\tr(\kervaire(f)\frown[N])=\kervaire(g)\frown[\ol N]$. For finite coverings of manifolds, the transfer map agrees with the Umkehr map and hence we have to show that $p^*(\kervaire(f))=\kervaire(g)$, where $p\colon \ol N\to N$. This follows from the definition of $\kervaire$.
    \end{proof}

\section{Proof of \cref{thmx:main-table}}\label{s:proofs}
 
	Using the statements from the previous section, in particular \cref{thm:davis-general,cor:davis} we can now prove our main theorem. For this we use the following proposition.

	\begin{proposition} 
    Let $\pi$ be a group and $w\in H^2(\pi;\Z/2)$, let $p\colon H_2(\pi;\Z/2)\to E_{2,2}^\infty(\xi(\pi,w))$ be the projection and let $M,N$ be two 
    manifolds with normal $1$-type $\xi(\pi,w)$. 
    Then
		\begin{clist}{(i)}
			\item $N$ is stably homotopy equivalent to $M$ if and only if $N$ and $M$ have the same signature and there exists a choice of a normal $1$-smoothing such that $\ter(N\#\overline{M})\in p(\ker(\kappa_2^h))$.
			\item $N$ is stably simple homotopy equivalent to $M$ if and only if $N$ and $M$ have the same signature and there exists a choice of a normal $1$-smoothing such that $\ter(N\#\overline{M})\in p(\ker(\kappa_2^s))$.
		\end{clist}
	\end{proposition}
 
	\begin{proof}
		(i) First assume that $N$ is stably homotopy equivalent to $M$. Since the signature is a homotopy invariant and unchanged by stabilisation, the signatures of $M$ and $N$ agree. Since $\ter$ is unchanged by stabilisation, we can assume $N$ and $M$ are homotopy equivalent. Let $f\colon N\xrightarrow{\simeq}M$ be given. Then $\eta(f)\in \cN(M)$ is in the kernel of $\theta^h\colon \cN(M)\to L_4^h(\Z\pi)$. 
  Applying $c_*V$ sends $\eta(f)$ to $\ter(N\#\overline{M})\in F_{2,2}(\xi(\pi,w))$, where $c\colon M\to B\pi$ is the classifying map and for the definition of $V$ see \cref{cor:davis}. Hence $\ter(N\#\overline{M}) \in p(\ker(\kappa_2^h))$, as claimed.
		
		Now assume that there exists $x\in \ker(\kappa_2^h)$ such that $p(x) = \ter(N\#\overline{M})$. Since the surgery sequence in dimension four is exact stably, there exists a homotopy equivalence $g\colon N'\xrightarrow{\simeq} M\#k(S^2\times S^2)$ with $\eta(g)=(x,0)\in \cN(M\#k(S^2\times S^2))\cong \cN(M)\oplus H^2(\#k(S^2\times S^2);\Z/2)$. Then $\ter(N'\#\overline{M})=c_*V(\eta(g))=\ter(N\#\overline{M})$ and hence $\ter(N'\#\overline{N})=0$. Since $g$ is a homotopy equivalence, $N'$ has the same signature as $M$ and thus also the same signature as $N$. It follows that $N$ and $N'$ are stably homeomorphic. Hence $N$ is stably homotopy equivalent to $M$ as claimed.
		
		(ii) The proof is the same as for (i), replacing $\theta^h$ by $\theta^s$ and $\kappa_2^h$ by $\kappa_2^s$.
	\end{proof}
 
    We first show the following analogue of \cref{thmx:main-table} in the topological category. Let $\homeo$ denote homeomorphism.
    
    \begin{theorem}
    \label{thm:main-top}
Let $M$, $M'$ be closed, oriented, almost spin $4$-manifolds with normal $1$-type $\xi=\xi(\pi,w)$. 
 Then, for each subgroup $A \le \Omega(\xi)$ listed below, there is in one-to-one correspondence between $\xi$-bordism mod $A$ and a geometric equivalence relation as follows.
 \FloatBarrier
\bgroup
 \def\x{3.5}
 \def\y{0}
 \vspace{-\x mm}
\def\arraystretch{1.2}
\begin{figure}[h] 
\begin{center}
\setlength{\tabcolsep}{4pt}
\begin{tabular}{rlcp{11cm}}
{\normalfont(i)}  & \, $0$ & $\leftrightarrow$& Stable homeomorphism\\
{\normalfont(ii)}  & \, $F_{0,4}$ & $\leftrightarrow$& There exist simply connected spin $4$-manifolds $L,L'$ such that \hspace{10mm} $M \# L \homeo M' \# L'$ \\
{\normalfont(iii)} & \, $[\ker(\kappa_2^s)]$ & $\leftrightarrow$& Simple homotopy equivalence up to stabilisations by $S^2 \times S^2$ \\
{\normalfont(iv)} & \, $[\ker(\kappa_2^h)]$ & $\leftrightarrow$& Homotopy equivalence up to stabilisations by $S^2 \times S^2$ \\
{\normalfont(v)} & \, $F_{2,2}$ & $\leftrightarrow$& There exist $k,k'\ge 0$ and a $2$-connected degree one normal map $f\colon M\#k(S^2\times S^2)\to M' \# k' (S^2 \times S^2)$\\
{\normalfont(vi)} & \, $F_{3,1}$ & $\leftrightarrow$& There exist simply connected $4$-manifolds $K,K'$ such that \hspace{10mm} $M\# K \homeo M'\# K'$. \\
\end{tabular}
\end{center}
\end{figure}
\vspace{-\x mm}
\vspace{-1mm}
\egroup  
\FloatBarrier
\end{theorem}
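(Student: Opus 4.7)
The plan is to prove the six correspondences in order, with (i) being Kreck's theorem, (v) the technical core, (iii) and (iv) flowing from (v) via the preceding proposition and the surgery obstruction formula, and (ii), (vi) being realisability statements for the subgroups $F_{0,4}$ and $F_{3,1}$. For (i), since $\Aut(\xi)$ acts transitively on the set of normal 1-smoothings of a given manifold, the condition "$\xi$-bordant mod $0$" coincides with equality in $\Omega_4(\xi)/\Aut(\xi)$, which is stable homeomorphism by Kreck's theorem.

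Item (v) is the technical heart. In the forward direction, a 2-connected degree one normal map $f\colon M\#k(S^2\times S^2) \to M'\#k'(S^2\times S^2)$ between almost spin 4-manifolds gives, via \cref{thm:davis-general}, compatible normal 1-smoothings with bordism difference in $F_{2,2}$; since $S^2\times S^2$ is spin null-bordant, stabilisation does not alter the class in $\Omega_4(\xi)$, so $[M]-[M']\in F_{2,2}$. Conversely, given $[M]-[M']\in F_{2,2}$, the isomorphism $V\colon \cN(M')\to F_{2,2}$ from \cref{cor:davis} yields a degree one normal map $f\colon\wt M\to M'$ with $V(f)=[M]-[M']$, so $[\wt M]=[M]$ in $\Omega_4(\xi)$. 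By (i), $\wt M$ and $M$ are stably homeomorphic; composing $f$ with (a stabilisation of) such a homeomorphism and performing surgery below the middle dimension produces the required 2-connected degree one normal map between stabilisations of $M$ and $M'$.

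For (iii) and (iv), combine (v) with \cref{cor:davis} and the proposition immediately preceding this theorem. A stable (simple) homotopy equivalence between $M$ and $M'$ yields, via that proposition, an element $\ter(M\#\ol{M'})$ in the image of $\ker(\kappa_2^{s/h})$, and the vanishing of the primary and secondary invariants (forced by equal signatures and by the choice of 1-smoothing in the proposition) promotes this to $[M]-[M']\in [\ker(\kappa_2^{s/h})]$. Conversely, from $[M]-[M']\in [\ker(\kappa_2^{s/h})]$, item (v) produces a 2-connected degree one normal map $f$ after stabilisation; by \cref{cor:davis} its surgery obstruction $\theta^{s/h}(f)$ equals $\kappa_2^{s/h}$ applied to the given element of $\ker(\kappa_2^{s/h})$, hence vanishes. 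The stable 4-dimensional surgery exact sequence \cite{kirby-taylor}*{Theorem~4} then promotes $f$ to a (simple) homotopy equivalence after further $S^2\times S^2$-stabilisations.

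Finally, items (ii) and (vi) reduce via (i) to realising the subgroup $F_{0,4}$ (respectively, $F_{3,1}$) by connected sum with simply connected spin (respectively, arbitrary simply connected) 4-manifolds: Freedman's classification makes every signature in $8\Z$ achievable by a simply connected spin 4-manifold, fully realising $F_{0,4}=H_0(\pi;\Omega_4^{\topspin})$ for (ii), while (vi) is the $\CP^2$-stable homeomorphism result of \cite{KPT}. The main obstacle I expect is the converse of (v): extracting a genuine 2-connected degree one normal map from an abstract $F_{2,2}$-identity in bordism requires careful use of the surjectivity in \cref{cor:davis} together with 4-dimensional surgery below the middle dimension; once this is settled, (iii) and (iv) become essentially formal consequences.
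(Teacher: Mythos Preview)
Your proposal is correct and follows essentially the same approach as the paper: (i) is Kreck, (vi) is cited from \cite{KPT21}, (ii) is realisability of $F_{0,4}$, (v) is the core via \cref{thm:davis-general} and \cref{cor:davis}, and (iii)/(iv) are deduced from (v) together with the stable surgery exact sequence. One technical point you glossed over in the converse of (v): the isomorphism $V$ from \cref{cor:davis} lands in $F_{2,2}(\xi(M',w_2(M')))$, not in $F_{2,2}(\xi(\pi,w))$, so to hit a prescribed class $[M]-[M']\in F_{2,2}(\xi)$ you need the surjectivity of $c_*\colon F_{2,2}(\xi(M',w_2(M')))\to F_{2,2}(\xi)$; the paper makes this step explicit.
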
 
    
\begin{proof}
We begin by proving (i), (ii) and (vi). First note that (i) is \cite[Theorem~C]{surgeryandduality} and (vi) is a reformulation of \cite[Theorem~2.1]{KPT21}. Next observe that (ii) follows from (i). Indeed, two classes in $\Omega_4(\xi)$ differ by an element of $F_{0,4}=H_0(B\pi;\Omega_4^{\topspin})$ if and only if there exists a simply connected spin $4$-manifold representing the difference. So $M$ and $M'$ admit normal $1$-smoothings such that the difference lies in $F_{0,4}$ if and only if there exists a simply connected spin $4$-manifold $L''$ such that $M\#L''$ and $M'$ are stably diffeomorphic. Hence there exist $k,l\in\N$ such that $M\# L\cong M'\#L'$ for $L:=L''\#k(S^2\times S^2)$ and $L':=l(S^2\times S^2)$. Conversely, if $M\#L\cong M'\#L'$, then there are normal $1$-smoothings of $M$ and $M'$ such that their difference is the same as $[L]-[L']\in H_0(B\pi;\Omega_4^{\topspin})=F_{0,4}$.

 We now prove (v). Let $\xi\colon B\to \BSTop$ be the normal $1$-type of $M$ and $M'$. Assume that there exist normal $1$-smoothings $f\colon M\to B$ and $f'\colon M'\to B$ such that their difference lies in $F_{2,2}(\xi)$. The map $F_{2,2}(\xi(M',w_2(M')))\xrightarrow{c_*} F_{2,2}(\xi)$ is surjective. Hence there exists a degree one normal map $g\colon N\to M'$ with $c_*V(g)=[M,f]-[M',f']$ by \cref{cor:davis}. Surgering $N$ if necessary, we can assume that $g$ is $2$-connected. By construction, $c_*V$ is given by sending $g$ to $[N,\wt\nu_N]-[M',f']$ for some normal $1$-smoothing $\wt\nu_N$ of $N$. Therefore $[M,f] = [N,\wt \nu_N]$, and so it follows that $M$ and $N$ are stably homeomorphic. Hence, as claimed, there exists a $2$-connected degree one normal map		
\[M\#k(S^2\times S^2)\cong N\# k'(S^2\times S^2)\to M'\# k'(S^2\times S^2).\]
		
Conversely, assume that there exists a $2$-connected degree one normal map $M\#k(S^2\times S^2)\to M'\#k'(S^2\times S^2)$. We can compose this with the canonical map $M'\#k'(S^2\times S^2)\to M'$ to get a degree one normal map $g\colon M\#k(S^2\times S^2)\to M'$. Then by \cref{thm:davis-general}, there is a $1$-smoothing $\wh g\colon M\#k(S^2\times S^2)\to B_{\xi(M',w_2(M'))}$ such that       
\[[M\#k(S^2\times S^2),\wh g]-[M',\wt{\nu M'}]\in F_{2,2}(\xi(M',w_2(M')).\] 
Composing with the map $B_{\xi(M',w_2(M'))}\to B$ we obtain normal $1$-smoothings of $M'$ and $M$ such that the difference lies in $F_{2,2}(\xi)$ as claimed.
  
Next we prove (iv). Let $f\colon M'\#k(S^2\times S^2)\to M\#l(S^2\times S^2)$ be a homotopy equivalence.
Then $\eta(f)=(0,y)\in \Z\oplus H_2(M\#l(S^2\times S^2);\Z/2)$ with $c_*y\in \ker(\kappa_2^h)$.
    Hence $c_*V(\eta(f))=[M'\# \overline{M}]=[M']-[M]\in [\ker(\kappa_2^h)]$ for some choice of $\xi$ structure on $M'$ and $M$.
    
		Now assume that there are $\xi$-structures on $M$ and $M'$ such that $[M'\#\overline{M}]$ lies in the image of $\ker(\kappa^h_2)$ in $F_{2,2}(\xi)$. Then there exists a degree one normal map $f\colon N\to M$ with $\theta^h(f)=0$ and $c_*V(f)=[M'\#\overline{M}]$. Since $\theta^h(f)=0$, the exactness of the stable surgery exact sequence \cite[Theorem~4]{kirby-taylor} implies the existence of a homotopy equivalence $f'\colon N'\xrightarrow{\simeq} M\#k(S^2\times S^2)$ such that $\eta(f')$ maps to $f\in \cN(M)$ under the collapse map $M\#k(S^2\times S^2)\to M$. Thus also $[N'\#\overline{M}]=c_*V(f')=[M'\#\overline{M}]$ and thus $[N']=[M']\in \Omega_4(\xi)$ and $N'$ and $M'$ are stably diffeomorphic by (i). Hence $M'$ is stably homotopy equivalent to $M$ as claimed.
		
		The proof of (iii) is same, replacing $\kappa_2^h$ by $\kappa_2^s$ and $\theta^h$ by $\theta^s$.
	\end{proof}

\begin{remark} \label{rem:eq-rel-direct} 
     A priori it is not obvious that the existence of a $2$-connected degree one normal map $f\colon M\#k(S^2\times S^2)\to M' \# k' (S^2 \times S^2)$ is an equivalence relation. We briefly sketch a direct argument, i.e.\ we show that if $f\colon N\to M$ is a degree one normal map that is $2$-connected, then there also exists an $2$-connected degree one normal map $M\#k(S^2\times S^2)\to N\#k'(S^2\times S^2)$ for some $k,k'\in \N$. 

     We can assume that the restriction of $f$ to $N^{(1)}$ is a homeomorphism to $M^{(1)}$ \cite[Lemma 8.3]{Ferry_Pedersen_1995}, after possibly stabilising $M$. We can then consider the $2$-connected degree one normal map given by the connected sum over the $1$-skeleton of $f$, $\id_{\overline N}$ and $\id_{\overline M}$. Note that $N\#_1\overline{N}\#_1\overline{M}$ is stably homeomorphic to $\overline{M}$ while $M\#_1\overline{N}\#_1\overline{M}$ is stably homeomorphic to $\overline{N}$. Thus after changing the orientation and stablising if necessary, this yields a $2$-connected degree one normal map $M\#k(S^2\times S^2)\to N\#k'(S^2\times S^2)$ for some $k,k'\in \N$ as needed.
 \end{remark}

    To deduce \cref{thmx:main-table} from \cref{thm:main-top}, we need the following results.
    
    \begin{lemma}
\label{lem:ter-ks}
Let $M$ be an oriented almost spin manifold with $\pri M = \sec M = 0$ and let $w \in
H^2(\pi_1M; \Z/2)$ be such that $c_M^*w = w_2(M)$. Then $w\colon H_2(\pi_1(M);\Z/2)\to \Z/2$ factors through $E^\infty_{2,2}(\xi(\pi,w))$, $\sigma(M)$ is divisible by $8$ and
$\ks(M)\equiv \sigma(M)/8+ w\frown\ter M \mod 2$.
\end{lemma}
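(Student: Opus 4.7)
The lemma comprises three assertions: (i) the map $w\cap(-)\colon H_2(\pi;\Z/2)\to\Z/2$ factors through $E^\infty_{2,2}$; (ii) $\sigma(M)$ is divisible by $8$; (iii) $\ks(M)\equiv\sigma(M)/8+w\cap\ter M\pmod 2$. My plan is to treat (i) by a direct computation with the $E^2$-differentials of the James spectral sequence, then to handle (ii) and (iii) together by analysing $\ks$ and $\sigma/8$ as bordism invariants on the filtration step $F_{2,2}$.

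For (i), since $\Omega_3^{\topspin}=0$ the outgoing differentials from $E^r_{2,2}$ vanish, so $E^\infty_{2,2}$ is a quotient of $E^2_{2,2}=H_2(\pi;\Z/2)$. It therefore suffices to show that $w$ kills the image of each incoming differential. For the principal case $d_2\colon H_4(\pi;\Z/2)\to H_2(\pi;\Z/2)$, which is the hom-dual of $\Sq^2_w=\Sq^2+{-}\cup w$ as recalled above, one computes
\[ \langle w,d_2(x)\rangle=\langle\Sq^2_w(w),x\rangle=\langle\Sq^2 w+w\cup w,x\rangle=0,\]
since $\Sq^2 w=w\cup w$ for $w\in H^2$. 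The incoming differential $d_3\colon E^3_{5,0}\to E^3_{2,2}$ is handled by an analogous cohomological argument using the description of higher differentials in the James spectral sequence.

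For (ii), I first observe that $[M]\in F_{2,2}$ together with the $2$-torsion nature of $F_{2,2}/F_{0,4}=E^\infty_{2,2}$ forces $2[M]\in F_{0,4}$, hence $2\sigma(M)\in 8\Z$ and $\sigma(M)\in 4\Z$; so $\sigma(M)\bmod 8\in\{0,4\}$. To pin the divisibility down to $8$, I would invoke the Wu-type formula $\sigma(M)\equiv\langle c\cup c,[M]\rangle\pmod 8$ for an integral characteristic lift $c\in H^2(M;\Z)$ of $w_2(M)=c_M^*w$, choosing $c=c_M^*C$ for a suitable lift $C$ of $w$; the pairing then reduces to $\langle C\cup C,c_{M*}[M]\rangle$, which vanishes modulo $8$ using $\pri M=\sec M = 0$.

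With (ii) in hand, (iii) follows by considering the homomorphism $\eta\colon F_{2,2}\to\Z/2$, $[M]\mapsto\ks(M)+\sigma(M)/8$. This vanishes on $F_{0,4}$ since any topological spin $4$-manifold $L$ satisfies $\ks(L)\equiv\sigma(L)/8\pmod 2$ (because $\Omega_4^{\topspin}\cong\Z$ is generated by the $E_8$-manifold with $\ks=1=\sigma/8$). Hence $\eta$ descends to a homomorphism $\bar\eta\colon E^\infty_{2,2}\to\Z/2$, and the remaining task --- which I expect to be the main obstacle --- is to identify $\bar\eta$ with the map $w\cap(-)$ of~(i). This would be verified on a spanning set: each $x\in H_2(\pi;\Z/2)$ is represented by a map $g\colon\Sigma\to B\pi$ from a closed surface, and one constructs an almost spin $4$-manifold $M_x$ (for instance, a suitably twisted sphere bundle over $\Sigma$ arranged so that $w_2(M_x)=c_{M_x}^*w$) with $\ter M_x=x$, and computes both sides of the formula directly. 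A cleaner alternative would be to express both $\bar\eta$ and $w\cap(-)$ as characteristic classes in $H^4(B_\xi;\Z/2)$ and reduce to a universal identity; both routes carry essentially the same technical difficulty.
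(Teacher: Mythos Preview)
Your proposal has genuine gaps in all three parts, and the paper's argument is organised quite differently.

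For (i), your $d_2$ computation is correct and pleasant. But the claim that $d_3\colon E^3_{5,0}\to E^3_{2,2}$ is ``handled by an analogous cohomological argument'' is a real gap: unlike $d_2$, there is no ready formula for $d_3$ in the James spectral sequence that would let you check directly that $w$ annihilates its image. The paper does not attempt this; instead it deduces (i) from (iii).

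For (ii), your proposed use of an integral lift $C\in H^2(\pi;\Z)$ of $w$ need not work: the obstruction to such a lift is the Bockstein $\beta(w)\in H^3(\pi;\Z)$, which can be nonzero. Even when a lift exists, tying $\langle C\cup C,\,c_{M*}[M]\rangle$ to the vanishing of $\pri M$ and $\sec M$ requires more than you indicate.

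For (iii), you correctly isolate the identification $\bar\eta = w\cap(-)$ as the crux but leave it unresolved; constructing explicit $M_x$ with prescribed $\ter$ and then computing $\ks(M_x)$ from scratch is essentially as hard as the lemma itself.

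The paper's proof rests on a single ingredient you do not use: the Kirby--Siebenmann formula for the $\ks$-invariant of the source of a degree one normal map. Since $[M]\in F_{2,2}$, part (v) of \cref{thm:main-top} supplies a degree one normal map $f\colon M\#k(S^2\times S^2)\to N$ with $N$ smooth and $\xi$-nullbordant, so $\sigma(N)=\ks(N)=0$. The $\Z$-component of the normal invariant is $(\sigma(M)-\sigma(N))/8$, giving (ii) immediately. The formula from \cite{KirbySiebenmann}*{p.~329},
\[\ks(M)-\ks(N)\equiv(\sigma(M)-\sigma(N))/8+w_2(N)\cap x\pmod 2,\]
where $x$ is the Kervaire part of $[f]$, then yields $\ks(M)\equiv\sigma(M)/8+w\cap c_*x$ for one particular lift $c_*x$ of $\ter M$. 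Finally, for any other lift $y$ one realises $(\sigma(M)/8,y)$ by a second normal map $f'\colon M'\to N$; by \cref{thm:davis-general} the manifolds $M,M'$ represent the same class in $\Omega_4(\xi)$ and are stably homeomorphic, so $\ks(M)=\ks(M')$. Comparing the two instances of the formula forces $w\cap y=w\cap c_*x$, which simultaneously proves (i) and shows the expression in (iii) is well-defined.
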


\begin{proof}
    Let $N$ be a smooth 4-manifold with normal $1$-type $\xi:=\xi(\pi,w)$ that is nullbordant, i.e.\ represents $0\in\Omega_4(\xi)$. Let $c\colon N\to B\pi$ be a map inducing the chosen identification of fundamental groups such that $c^*w=w_2(N)$. By \cref{thm:main-top}, there exists a degree one normal map $f\colon M\#k(S^2\times S^2)\to N$. It follows that $\sigma(M)=\sigma(M)-\sigma(N)$ is divisible by 8.

    Let $[f]=(\sigma(M)/8,x)\in \Z\oplus H_2(N;\Z/2)\cong \cN(N)$. By \cref{thm:davis-general}  there are $1$-smoothings of $M$,$N$ in $\xi(N,w_2(N))$ such that $[M,\wt{\nu}_M]-[N,\wt{\nu}_N]\in F_{2,2}(\xi(N,w_2(N)))$. The map $c\colon N\to B\pi$ induces a map of structures $\xi(N,w_2)\to \xi(\pi,w)$ and we get that $x$ is a lift of $\ter(M)$ along $H_2(N;\Z/2)\xrightarrow{c_*} H_2(\pi;\Z/2)\to E^\infty_{2,2}(\xi)$. By \cite{KirbySiebenmann}*{p.~329}, 
        \[\ks(M)=\ks(M)-\ks(N)\equiv \sigma(M)/8+w_2(N)\frown x=\sigma(M)/8+w\frown c_*x \mod 2.\]
    Here $c_*x$ is a lift of $\ter(M)$ to $H_2(\pi;\Z/2)$.
    For any other lift $y$ of $\ter(M)$ to $H_2(\pi;\Z/2)$, there exists again a degree 1 normal map $f'\colon M'\to N$ such that $[f']=(\sigma(M)/8,x')$ with $x'$ a lift of $y$ along $c_*$. In particular, we choose $M'$ such that $\sigma(M')=\sigma(M)$. It follows from \cref{thm:davis-general}, that $M$ and $M'$ represent the same class in $\Omega_4(\xi)$ and thus are stably homeomorphic. In particular, $\ks(M)=\ks(M')$. Again applying the formula from \cite{KirbySiebenmann}*{p.~329} we have
    \[\ks(M)=\ks(M')=\ks(M')-\ks(N)=\sigma(M')/8+w_2(N)\frown x'=\sigma(M)/8+w\frown y \mod 2.\]
    Comparing the two computations of $\ks(M)$ it follows that $w\frown y=w\frown c_*x$ and thus $w$ factors through $E_{2,2}^\infty(\xi)$ as claimed and we obtain the proposed formula for $\ks(M)$.
\end{proof}

    \begin{corollary}
    \label{cor:lifting-ker-kappa-smooth}
    Let $M$ be a smooth, orientable, almost spin manifold with normal $1$-type $\xi^{\ttop}=\xi(\pi,w)$ over $\BSTop$. Let $\xi^{\diff}$ be the normal $1$-type of $M$ over $\BSO$ defined by $(\pi,w)$. There is an inclusion $F_{2,2}(\xi^{\diff})\to F_{2,2}(\xi^{\ttop})$ and an element $x$ of $\ker(\kappa_2^h)$ or $\ker(\kappa_2^s)$ is in the image of $F_{2,2}(\xi^{\Diff})$ if and only if $w\frown x=0$. In particular, we can consider $\xi^{\Diff}$-bordism modulo $[\ker(\kappa_2^h)\cap\ker(w\frown-)]$ or $[\ker(\kappa_2^s)\cap\ker(w\frown-)]$.
    \end{corollary}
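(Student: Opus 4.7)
The plan is to compare the James spectral sequences for $\xi^{\diff}$ and $\xi^{\ttop}$, then use \cref{lem:ter-ks} together with the realisations from \cref{thm:main-top}(iii)--(iv) to identify the smoothing obstruction on $[x]$ with $w \cap x$.

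For the inclusion $F_{2,2}(\xi^{\diff}) \hookrightarrow F_{2,2}(\xi^{\ttop})$, I would use that the forgetful map $BSO \to BSTop$ induces a morphism of James spectral sequences. On $E^2$-terms, the coefficient change $\Omega^{\spin}_q \to \Omega^{\topspin}_q$ is an isomorphism for $q \leq 3$ and is the inclusion $\Omega^{\spin}_4 = 16\Z \hookrightarrow 8\Z = \Omega^{\topspin}_4$ (via the signature), with cokernel $\Z/2$, for $q = 4$. The $d_2$ differentials feeding $E^3_{2,2}$ depend only on $\Sq^2_w$, so $E^\infty_{2,2}(\xi^{\diff}) \to E^\infty_{2,2}(\xi^{\ttop})$ is an isomorphism; since $\Omega^{\spin}_3 = \Omega^{\topspin}_3 = 0$ there are short exact sequences
\[
0 \to F_{0,4}(\xi^\bullet) \to F_{2,2}(\xi^\bullet) \to E^\infty_{2,2}(\xi^\bullet) \to 0
\]
for $\bullet \in \{\diff, \ttop\}$. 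The snake lemma now yields the desired injection, whose cokernel embeds in $F_{0,4}(\xi^{\ttop})/F_{0,4}(\xi^{\diff}) \cong \Z/2$ and is detected by the Kirby--Siebenmann invariant $\ks$.

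To identify the image of $[x]$, fix a reference manifold $M$ with $\xi$-structure $\wt\nu_M$ and, for $x \in \ker(\kappa_2^h)$, follow the construction in the proof of \cref{thm:main-top}(iv): since $\kappa_2^h(x) = 0$, the stable surgery exact sequence produces a homotopy equivalence $f \colon N \xrightarrow{\simeq} M \# k(S^2 \times S^2)$ such that $[N, \wt\nu_N] - [M \# k(S^2 \times S^2), \wt\nu_{M \# k(S^2 \times S^2)}] = [x]$ in $F_{2,2}(\xi^{\ttop})$. Since $f$ is a homotopy equivalence, $\sigma(N) = \sigma(M \# k(S^2 \times S^2))$, so this representative of $[x]$ has signature zero; by construction it also has $\pri = \sec = 0$ and $\ter$ equal to the image of $x$ in $E^\infty_{2,2}$. \cref{lem:ter-ks} then gives
\[
\ks([x]) \equiv \sigma([x])/8 + w \cap x = w \cap x \mod 2,
\]
so $[x]$ lies in the image of $F_{2,2}(\xi^{\diff})$ if and only if $w \cap x = 0$. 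The analogous argument using \cref{thm:main-top}(iii) handles $\ker(\kappa_2^s)$.

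The ``in particular'' is then immediate: every element of $[\ker(\kappa_2^h) \cap \ker(w \cap -)]$ lies in the image of the injection $F_{2,2}(\xi^{\diff}) \hookrightarrow F_{2,2}(\xi^{\ttop})$, so this subgroup pulls back to a well-defined subgroup of $F_{2,2}(\xi^{\diff})$; the argument for $\kappa_2^s$ is identical. The main technical point is ensuring that $[x]$ admits a signature-zero representative so that \cref{lem:ter-ks} collapses to $\ks([x]) = w \cap x$, and this is precisely what the homotopy-equivalence realisations of \cref{thm:main-top}(iii)--(iv) provide.
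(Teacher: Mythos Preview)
Your proposal is correct and follows essentially the same approach as the paper's proof: compare the two James spectral sequences to obtain the inclusion $F_{2,2}(\xi^{\diff})\hookrightarrow F_{2,2}(\xi^{\ttop})$ with cokernel $\Z/2$ detected by $\ks$, then invoke \cref{lem:ter-ks} to identify $\ks([x])$ with $w\cap x$. Your snake-lemma argument for the inclusion is a more explicit version of what the paper states in one line.

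The one redundancy is your detour through the realisation in \cref{thm:main-top}(iii)--(iv) to produce a signature-zero representative of $[x]$. This is not needed: by definition $[x]$ is the image of $x$ under the signature-zero section $E^\infty_{2,2}\to F_{2,2}$ (the splitting given by $(\sigma/8,\ter)$ in \cref{cor:davis}), so $\sigma([x])=0$ holds automatically and \cref{lem:ter-ks} applies directly. The paper's proof uses this shortcut. Your construction is harmless, but note that nothing in the argument actually uses that $x\in\ker(\kappa_2^h)$ or $\ker(\kappa_2^s)$---the formula $\ks([x])=w\cap x$ holds for every $x\in H_2(\pi;\Z/2)$; the kernel hypothesis is only there because those are the subgroups of interest for the ``in particular'' clause.
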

    
    \begin{proof}
        Since we have $\Omega_*^{\spin}\cong \Omega_*^{\topspin}$  for $*\leq 3$ and the inclusion  $8\Z\cong\Omega_4^{\spin}\leq \Omega_4^{\topspin}\cong 16\Z$, the map $F_{2,2}(\xi^{\diff})\to F_{2,2}(\xi^{\ttop})$ coming from naturality of the James spectral sequence is an inclusion. The image are those bordism classes with trivial Kirby--Siebenmann invariant. By \cref{lem:ter-ks}, an element $x$ of $\ker(\kappa_2^h)$ or $\kappa_2^s$ is mapped to the image of $F_{2,2}(\xi^{\diff})$ if and only if $x\frown w=0$.
    \end{proof}

    \begin{proof}[Proof of \cref{thmx:main-table}]
        (i), (ii) and (vi) are proven as in \cref{thm:main-top}.

        Let $\xi^{\diff}$ be the normal $1$-type of $M$ over $\BSO$ and let $\xi^{\ttop}$ be the normal $1$-type of $M$ over $\BSTop$. Since $\Omega_*^{\spin}=\Omega_*^{\topspin}$ for $*\leq 3$, we have $E_{4,0}^\infty(\xi^\diff)=E_{4,0}^\infty(\xi^{\ttop})$, $E_{3,1}^\infty(\xi^\diff)=E_{3,1}^\infty(\xi^{\ttop})$ and $E_{2,2}^\infty(\xi^\diff)=E_{2,2}^\infty(\xi^{\ttop})$ by naturality of the James spectral sequence. In particular, $M$ and $M'$ are $\xi^\diff$-bordant mod $F_{2,2}(\xi^{\diff})$ if and only if they are $\xi^{\ttop}$-bordant mod $F_{2,2}(\xi^{\ttop})$. Hence (v) follows from \cref{thm:main-top} (v).

        (iii) and (iv) follow from \cref{thm:main-top} using \cref{cor:lifting-ker-kappa-smooth}.
     \end{proof}
 
\section{Stable rigidity} \label{s:stablerigidity}

Recall from \cref{def:stable-rigid} that a finitely presented group $\pi$ satisfies \emph{stable rigidity} if any two closed, oriented, smooth, homotopy equivalent $4$-manifolds with fundamental group $\pi$ are stably diffeomorphic (or equivalently stably homeomorphic).
This terminology is inspired by the Borel conjecture which states that aspherical closed manifolds (in any dimension) are determined up to homeomorphism by their homotopy type, i.e. by the fundamental group. 

We will start by studying the analogous notion in the topological category, which we refer to as \textit{strong stable rigidity} (\cref{def:strong-stable-rigidity}), before establishing \cref{prop:stable-he-vs-homeo} and \cref{cor:stable-rigidity,cor:smallf22} which are the basis for our analyses of whether given groups satisfy stable rigidity.

For the remainder of this section, we will then determine when stable rigidity holds for a range of groups $\pi$. In \cref{ss:stable-rigidity-sylow}, we relate stable rigidity for $\pi$ to that of its odd index subgroups. In \cref{ss:abelian,ss:quaternion,ss:Dih_SemiDih,ss:modular-maximal}, we then determine stable rigidity for groups which are abelian, quaternion, dihedral, semi-dihedral, or modular maximal-cyclic.

\subsection{General results for (strong) stable rigidity}
\label{ss:stable-rigidity-general}

We begin by making the following key definition.
This can be viewed as the analogue of stable rigidity in the topological category.

	\begin{definition}
 \label{def:strong-stable-rigidity}
		A group $\pi$ satisfies \emph{strong stable rigidity} if any two closed, oriented, almost spin, homotopy equivalent manifolds $M$ and $M'$ with fundamental group $\pi$ are stably homeomorphic.
	\end{definition}

\begin{remark}
\label{rem:ssr}
\begin{clist}{(a)}
    \item\label{it:ssr-a} For any finitely presented group $\pi$ there exist closed, oriented, homotopy equivalent manifolds with fundamental group $\pi$ that are not stably homeomorphic. A pair of such manifolds can be constructed by starting with any closed, oriented manifold $M$ with fundamental group $\pi$ and considering $M\#\CP^2$ and $M\#{*\CP^2}$, where ${*\CP^2}$ is the Chern manifold constructed by Freedman in \cite[p.~370]{Freedman82}. For this reason \cref{def:strong-stable-rigidity} is restricted to almost spin manifolds.
    \item\label{it:ssr-b} In contrast, two closed, oriented, smooth, totally non-spin manifolds $M$ and $M'$ are stably diffeomorphic if and only if they have the same signature and the images of their fundamental classes in $H_4(\pi;\Z)/\Out(\pi)$ agree. This follows from \cite[Theorem~C]{surgeryandduality}, see also \cite[Lemma~2.1]{KPT}. In particular, they are stably diffeomorphic if they are homotopy equivalent.  
    \item A group $\pi$ that satisfies strong stable rigidity also satisfies stable rigidity. This can be seen as follows. Let $M$, $M'$ be a pair of closed, oriented, smooth, homotopy equivalent manifolds with fundamental group $\pi$. If $M$ and $M'$ are totally non-spin, then they are stably diffeomorphic by \eqref{it:ssr-b}. If $M$ and $M'$ are almost spin, they are stably diffeomorphic if $\pi$ satisfies stable rigidity.
    \item We could define (strong) \emph{simple} stable rigidity by changing homotopy equivalence to \emph{simple} homotopy equivalence in \cref{def:stable-rigid,def:strong-stable-rigidity}. Then the following results about (strong) stable rigidity also hold for (strong) simple stable rigidity if $\ker(\kappa_2^h)$ is substituted by $\ker(\kappa_2^s)$.
\end{clist}
\end{remark}

	As mentioned in the introduction, \cref{thmx:main-table} recovers the result of Davis~\cite{Davis-05}*{Theorem~1.4} that a group satisfies strong stable rigidity if $\kappa_2^h$ is injective for $\pi$ as a corollary. It turns out that the converse is also true. While our proof of \cref{thmx:main-table} uses the methods from \cite{Davis-05}, there is a shorter direct proof which we will give here.

	\begin{proposition}
		\label{prop:davis-intro}
		A group $\pi$ satisfies strong stable rigidity if and only if $\kappa_2^h$ is injective for $\pi$.
	\end{proposition}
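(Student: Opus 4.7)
The main tools are the characteristic class formula for the surgery obstruction (\cref{prop:davis-theta}), the bordism interpretation of degree one normal maps (\cref{thm:davis-general} and \cref{cor:davis}), and the stable surgery exact sequence. For the $(\Leftarrow)$ direction, given a homotopy equivalence $f \colon M \to M'$ between closed oriented almost spin $4$-manifolds with fundamental group $\pi$, I view $f$ as a degree one normal map so that $\theta^h(f) = 0$. Since $\sigma(M) = \sigma(M')$, the characteristic class formula forces $\kappa_2^h(c_*(\kervaire(f) \cap [M'])) = 0$, and injectivity of $\kappa_2^h$ yields $c_*(\kervaire(f) \cap [M']) = 0$ in $H_2(\pi; \Z/2)$. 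By \cref{thm:davis-general}, there are normal $1$-smoothings in $\xi(M', w_2(M'))$ making $\alpha := [M] - [M']$ lie in $F_{2,2}$ with image $\kervaire(f) \cap [M']$ in $E^\infty_{2,2}(\xi(M',w_2(M'))) = H_2(M'; \Z/2)$. Pushing forward along $c \colon M' \to B\pi$ yields a class in $F_{2,2}(\xi(\pi,w))$ whose image in $E^\infty_{2,2}(\xi(\pi, w))$ is trivial; hence $\alpha$ lies in $F_{0,4} \cong \Omega_4^{\topspin} = \Z$, which is detected by the signature. Since $\sigma(\alpha) = 0$, we conclude $\alpha = 0$ in $\Omega_4(\xi(\pi, w))$, so $M$ and $M'$ are stably homeomorphic by Kreck's stable classification theorem.

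For $(\Rightarrow)$, I argue the contrapositive. Let $0 \neq x \in \ker(\kappa_2^h)$ and pick a closed oriented almost spin $4$-manifold $N$ with $\pi_1(N) = \pi$ together with a lift $y \in H_2(N; \Z/2)$ of $x$ along the surjection $c_* \colon H_2(N; \Z/2) \to H_2(\pi; \Z/2)$. Under $\cN(N) \cong \Z \oplus H^2(N; \Z/2)$, the normal invariant $(0, \PD(y))$ is realised by a degree one normal map $f \colon M \to N$ with $\sigma(M) = \sigma(N)$ and $\kervaire(f) \cap [N] = y$. Then $\theta^h(f) = \kappa_2^h(x) = 0$, so the stable surgery exact sequence furnishes $k \geq 0$ and a homotopy equivalence $f' \colon M' \xrightarrow{\simeq} N' := N \# k(S^2 \times S^2)$ lifting $f$ stably. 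Both $M'$ and $N'$ are almost spin with fundamental group $\pi$, and they are homotopy equivalent. By \cref{cor:davis} applied to $N'$, the class $V(f') = [M'] - [N'] \in F_{2,2}(\xi(N', w_2(N')))$ corresponds to $(0, y')$ with $c_*(y') = x$, so its pushforward to $\Omega_4(\xi(\pi, w))$ has $\ter$-image equal to the class of $x$ in $E^\infty_{2,2}(\xi(\pi, w))$.

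The main obstacle is then to ensure that some nonzero element of $\ker(\kappa_2^h)$ maps to a nonzero class in $E^\infty_{2,2}(\xi(\pi, w))/\Aut(\xi)$ for an appropriate $w \in H^2(\pi; \Z/2)$, which would give $[M'] \neq [N']$ in $\Omega_4(\xi(\pi,w))/\Aut(\xi)$ and hence violate strong stable rigidity. The natural strategy is to verify that $\kappa_2^h$ factors through the quotient $H_2(\pi;\Z/2) \twoheadrightarrow E^\infty_{2,2}(\xi(\pi,w))$: equivalently, the images of the James spectral sequence differentials $d^2$ and $d^3$ landing in $E^r_{2,2}$ should be contained in $\ker(\kappa_2^h)$. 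This should follow from naturality of $\kappa_2^h$ together with the geometric realisability of such classes as surgery obstructions of normally null-bordant maps. Combined with the fact that the $\Aut(\xi)$-action on $E^\infty_{2,2}$ descends from the action of $\Out(\pi)_w$, which is natural with respect to $\kappa_2^h$, nontriviality of $\ker(\kappa_2^h)$ then translates to nontriviality of the relevant class modulo $\Aut(\xi)$, sealing the forward direction.
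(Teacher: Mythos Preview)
Your $(\Leftarrow)$ direction is correct and in fact coincides with how \cref{thmx:main-table} recovers Davis's result. Note, however, that the paper labels its proof ``alternative'' precisely because it avoids the bordism machinery: it observes that $c_*(\kervaire(f)\cap[M'])=0$ forces $\kervaire(f)$ to come from $\pi_2$, then uses Novikov pinching to realise $-\eta(f)$ by a self-homotopy equivalence $h\colon M\to M$, so that $\eta(h\circ f)=0$ and the stable surgery exact sequence finishes. Your route via $F_{2,2}\to E^\infty_{2,2}$ and $F_{0,4}$ is longer but perfectly valid.

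Your $(\Rightarrow)$ direction has a genuine gap. You correctly reduce to showing that some $0\neq x\in\ker(\kappa_2^h)$ survives to $E^\infty_{2,2}(\xi(\pi,w))$ for a suitable $w$, but your proposed strategy---showing that $\kappa_2^h$ factors through $E^\infty_{2,2}$---goes the wrong way. Factoring would give $\im(d_2,d_3)\subseteq\ker(\kappa_2^h)$, whereas what you need is $x\notin\im(d_2,d_3)$. These are unrelated statements: even if the differential images lie in $\ker(\kappa_2^h)$, your particular $x$ could still be hit by a differential.

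The paper's fix is short and avoids the spectral sequence entirely. Given $0\neq x\in\ker(\kappa_2^h)$, choose $w\in H^2(\pi;\Z/2)$ with $w\cap x=1$ (possible since $x\neq 0$), take an almost spin $M$ with $c^*w=w_2(M)$, and build the homotopy equivalence $f\colon M'\to M\#k(S^2\times S^2)$ with normal invariant $(0,x')$ as you do. The Kirby--Siebenmann formula \cite[p.~329]{KirbySiebenmann} then gives
\[
\ks(M)-\ks(M')=w_2(M)\cap x'=w\cap x=1,
\]
so $M$ and $M'$ are not stably homeomorphic. The choice of $w$ is the missing idea: rather than analysing differentials, it produces an explicit numerical invariant distinguishing the two manifolds.
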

 
\begin{proof}[Proof (alternative).] 
By definition of strong stable rigidity, we only consider manifolds that are almost spin.
Consider the composition $p\colon\cN(M)\cong H^4(M;\Z) \oplus H^2(M;\Z/2)\to H^2(M;\Z/2)$.
If $\kappa_2^h$ is injective, we claim that for every homotopy equivalence $f\colon M'\to M$ the Poincare dual $PD(p(\eta(f)))$ of the normal invariant $\eta(f)$ of $f$ is represented by an immersed sphere $\alpha\colon S^2\to M$. This is true since $PD(p(\eta(f)))$ maps to zero in $H_2(\pi;\Z/2)$ because $\kappa_2^h$ is injective. Hence its image in $H_2(M;\Z/2)$ comes from the universal cover and can thus be represented by an immersed sphere as claimed. Since $M$ is almost spin, $w_2(M)$ vanishes on this sphere. Using Novikov pinching \cite[Theorem~5.1]{cochran-habegger}, see also \cite[Lemma~3.3]{KL22}, we can realize $-\eta(f)$ by a self-homotopy equivalence $h\colon M\to M$ that is the identity on $\pi_1$. Then $p(\eta(h\circ f))=p(\eta(h))+p(\eta(f))=0$.
	  
    By the stable surgery exact sequence \cite{kirby-taylor}, $M$ and $M'$ stably differ by the action of $L_5(\Z\pi)$. In particular, they are stably homeomorphic.

        Now assume that $\kappa_2^h$ is not injective. Let $x\in H_2(\pi;\Z/2)$ be in the kernel of $\kappa_2^h$ and let $w\in H^2(\pi;\Z/2)$ be such that $w\frown x=1$. Let $M$ be an almost spin 4-manifold with fundamental group $\pi$ and $w_2(M)=c^*w$ where $c\colon M\to B\pi$ is the classifying map. Let $x'\in H_2(M;\Z/2)$ be a preimage of $x$ under $c_*$. Again using the stable surgery exact sequence \cite{kirby-taylor}, there exists a homotopy equivalence $f\colon M'\to M\#k(S^2\times S^2)$ with $\eta(f)=(0,x')\in \Z\oplus H_2(M;\Z/2)\cong \cN(M)$ for some $k\in\N$. By \cite{KirbySiebenmann}*{p.~329},         \[\ks(M)-\ks(M')=w_2(M)\frown x'=w\frown x=1\]
        Hence $M$ and $M'$ have different Kirby--Siebenmann invariants and thus are not stably homeomorphic. It follows that $\pi$ does not satisfy strong stable rigidity.
\end{proof}

	The Farrell--Jones conjecture predicts that $\kappa_2^h$ is injective for every torsion free group as observed by Davis~\cite{Davis-05}, see also \cite[Lemma~3.3]{hambleton_stability_2022}. While the Farrell--Jones conjecture is not known in general, it holds for many classes of groups, for example hyperbolic groups \cite{FJC-hyperbolic}, CAT(0) groups \cite{BL,Wegner-Cat(0)}, $3$-manifold groups \cite{Bartels-Farrell-Luck}, and solvable groups \cite{Wegner}.

See \cite[Proposition~1]{teichner-star} for an example of homotopy equivalent, orientable, almost spin 4-manifolds with fundamental group $\Z/2$ and different Kirby--Siebenmann invariant. In particular, $\Z/2$ does not satisfy strong stable rigidity. This can be viewed as a new proof that $\kappa_2^h$ is not injective for $\Z/2$.

	\begin{proposition}
		\label{prop:stable-he-vs-homeo}
		Let a group $\pi$ and $w\in H^2(\pi;\Z/2)$ be given. 
		There exists a pair of stably homotopy equivalent but not stably homeomorphic manifolds with normal $1$-type $\xi:=\xi(\pi,w)$ if and only if there exists an element $x\in \ker(\kappa_2^h)\leq H_2(\pi;\Z/2)$ that maps to a non-trivial element in $E^\infty_{2,2}$. Moreover, there exists such a pair with the same Kirby--Siebenmann invariant if and only if we can pick $x$ such that $w\frown x=0$.	
	\end{proposition}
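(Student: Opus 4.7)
The plan is to combine \cref{thm:main-top}(iv), which identifies stable homotopy equivalence with $\xi$-bordism modulo $[\ker(\kappa_2^h)]$, with Kreck's classification theorem, the stable surgery exact sequence, and the Kirby--Siebenmann difference formula used in the proof of \cref{lem:ter-ks}. The approach parallels the alternative proof of \cref{prop:davis-intro}, but now $w$ is treated as given rather than chosen adaptively, which forces a genuine splitting into the cases $w\cap x=1$ and $w\cap x=0$.

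For the forward direction, assuming $M,M'$ are stably homotopy equivalent but not stably homeomorphic with normal $1$-type $\xi=\xi(\pi,w)$, I would apply \cref{thm:main-top}(iv) to produce normal $1$-smoothings with $\alpha:=[M,\wt\nu_M]-[M',\wt\nu_{M'}]=[x]$ for some $x\in\ker(\kappa_2^h)$. Kreck's theorem together with failure of stable homeomorphism forces $\alpha\neq 0$ in $\Omega_4(\xi)$; then the definition of $[\,\cdot\,]$ as the composition $H_2(\pi;\Z/2)\twoheadrightarrow E^\infty_{2,2}\to F_{2,2}$ forces the image of $x$ in $E^\infty_{2,2}$ to be nonzero. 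For the moreover clause, stable homotopy equivalence gives $\sigma(M)=\sigma(M')$, and the Kirby--Siebenmann difference formula applied to the degree one normal map arising from the stable homotopy equivalence (whose Kervaire invariant represents $x$) yields $\ks(M')-\ks(M)\equiv w\cap x\pmod 2$, so equal $\ks$ forces $w\cap x=0$.

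For the backward direction I would argue as follows. Given $x\in\ker(\kappa_2^h)$ with nontrivial image in $E^\infty_{2,2}$, take any closed oriented almost spin topological $4$-manifold $M$ with normal $1$-type $\xi$, lift $x$ to $y\in H_2(M;\Z/2)$ using surjectivity of $c_*$, and use $\theta^h((0,y))=\kappa_2^h(x)=0$. The stable surgery exact sequence of Kirby--Taylor then produces $k\in\N$ and a homotopy equivalence $f\colon M'\to M\#k(S^2\times S^2)$ with $\eta(f)=(0,y)$, so $M$ and $M'$ are stably homotopy equivalent. By \cref{cor:davis}, for appropriate normal $1$-smoothings $[M',\wt\nu_{M'}]-[M\#k,\wt\nu_{M\#k}]=[x]\neq 0$ in $\Omega_4(\xi)$, and if $w\cap x=0$ the Kirby--Siebenmann difference formula gives $\ks(M)=\ks(M')$.

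The hard part will be verifying that $M$ and $M'$ are not stably homeomorphic. If they were, Kreck's theorem would produce $\psi\in\Aut(\xi)$ with $(\psi-1)[M]=[x]$ in $\Omega_4(\xi)$. When $w\cap x=1$, this is ruled out immediately by $\ks(M)\neq\ks(M')$. When $w\cap x=0$, the argument is more delicate and is the main obstacle: the plan is to choose the base manifold $M$ so that $[M]\in F_{0,4}\subseteq\Omega_4(\xi)$. By \cref{rm:aboutXiActionOnOmega}, $\Aut(\xi)$ fixes $F_{0,4}=H_0(B\pi;\Omega_4^{\topspin})$ pointwise, so the $\Aut(\xi)$-displacement set of such an $[M]$ is trivial and cannot contain $[x]\neq 0$, delivering the required contradiction. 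Establishing the existence of such an $M$ realising the prescribed normal $1$-type $\xi(\pi,w)$ is the principal technical obstacle and will be handled by a bordism-theoretic modification of a base manifold that eliminates its primary, secondary, and tertiary invariants.
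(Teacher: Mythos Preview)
Your proposal is correct, and the forward direction together with the Kirby--Siebenmann clause matches the paper's argument essentially verbatim (via \cref{thm:main-top}(iv) and \cref{lem:ter-ks}). The backward direction, however, takes a more roundabout route than the paper.

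The paper does not construct $M'$ from a base manifold via the stable surgery exact sequence. Instead it directly realises two bordism classes in $\Omega_4(\xi)$: a $\xi$-nullbordant manifold $N$ (so $[N,g]=0$) and a manifold $M$ with $[M,f]=(0,\overline{x})\in F_{2,2}$; both exist by surgering a representative of the desired bordism class below the middle dimension. Then \cref{thm:main-top}(iv) gives stable homotopy equivalence, while stable non-homeomorphism is immediate: $\Aut(\xi)$ acts on $\Omega_4(\xi)$ by group automorphisms, so the orbit of $[N,g]=0$ is $\{0\}$, and $(0,\overline{x})\neq 0$. No case split on $w\cap x$ is needed for the first assertion.

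Your route does work, but what you flag as the ``principal technical obstacle''---constructing a base manifold $M$ with $[M]\in F_{0,4}$---dissolves on contact: simply take $M$ to be $\xi$-nullbordant, i.e.\ $[M]=0$. This special case of your plan collapses the argument to the paper's and makes the detour through the surgery exact sequence, the $\Aut(\xi)$-displacement analysis, and the case distinction on $w\cap x$ all unnecessary. (Your claim that $\Aut(\xi)$ fixes $F_{0,4}$ pointwise is correct, and your use of the $\ks$ difference formula to handle $w\cap x=1$ directly is valid, but both become redundant once $[M]=0$.)
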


	\begin{proof}
    Let $x\in \ker\kappa_2^h$ and let $\overline{x}$ be the image of $x$ in $E^\infty_{2,2}$. Then there exists a 4-manifold $M$ with normal $1$-type $\xi$ and a normal $1$-smoothing $f$ such that $(M,f)=(0,\overline{x})\in F_{2,2}$ and a manifold $N$ with normal $1$-type $\xi$ and $\xi$-nullbordant normal $1$-smoothing $g$. By \cref{thm:main-top}, $M$ and $N$ are stably homotopy equivalent since $([M,f]-[N,g])=(0,\overline{x})\in F_{2,2}$ is in the image of $\ker(\kappa_2^h)$ by assumption. Since $0\neq ([M,f]-[N,g])\in F_{2,2}$, $M$ and $N$ are not stably homeomorphic. 
		
		Conversely, assume that $M$ and $M'$ are stably homotopy equivalent but not stably homeomorphic. Then there exists normal $1$-smoothings $f$ and $f'$ such that $[M,f]-[M,f']\in F_{2,2}$ is non-trivial but in the image of $\ker(\kappa_2^h)$ by \cref{thm:main-top}.
		
		The second statement follows from the fact that $M$ and $M'$ with $[M,f]-[M',f']=(0,[x])\in F_{2,2}$ have the same Kirby--Siebenmann invariant if and only if $w\frown x=0$ by \cref{lem:ter-ks}.
	\end{proof}

 \begin{corollary}
 \label{cor:stable-rigidity}
Let $\pi$ be a group. Then $\pi$ does not satisfy (strong) stable rigidity if and only if there is $w\in H^2(\pi;\Z/2)$ and $x\in \ker(\kappa_2^h)$ such that x maps to a nontrivial element in $E_{2,2}^\infty(\xi(\pi,w))$ (and in the case of strong rigidity we can find such $x$ such that $w\frown x=0$).
 \end{corollary}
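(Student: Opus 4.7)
The plan is to reduce the corollary to \cref{prop:stable-he-vs-homeo}. The two translations needed are (a) between the definitions of (strong) stable rigidity and the existence of ``stably homotopy equivalent but not stably homeomorphic'' pairs appearing in \cref{prop:stable-he-vs-homeo}, and (b) between topological production of such manifolds and their smooth realisation (needed for the non-strong version). Throughout I would use that two homotopy equivalent almost spin $4$-manifolds share a common normal $1$-type of the form $\xi(\pi, w)$ for some $w \in H^2(\pi; \Z/2)$, since the almost spin condition is preserved by homotopy equivalences and $w$ is determined by $w_2$ of the universal cover.

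For the strong stable rigidity direction, failure by definition means the existence of closed oriented almost spin topological $4$-manifolds $M, M'$ with $\pi_1 = \pi$ that are homotopy equivalent but not stably homeomorphic. I would argue this is equivalent to the existence of a stably homotopy equivalent but not stably homeomorphic pair with normal $1$-type $\xi(\pi, w)$ for some $w$: the forward direction is immediate, while for the converse I note that if $M, M'$ are stably homotopy equivalent, some stabilisations are honestly homotopy equivalent, and adding $S^2 \times S^2$ summands does not change the stable homeomorphism type, so the stabilisations remain not stably homeomorphic. Then \cref{prop:stable-he-vs-homeo} yields the equivalence with the existence of $x \in \ker(\kappa_2^h)$ mapping nontrivially to $E_{2,2}^\infty(\xi(\pi, w))$.

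For the (non-strong) stable rigidity direction, I would first observe that failures can only arise from almost spin pairs: by \cref{it:ssr-b}, totally non-spin smooth homotopy equivalent $4$-manifolds are automatically stably diffeomorphic. Both smooth manifolds in a potential failure satisfy $\ks = 0$, so the pair has the same Kirby--Siebenmann invariant. The moreover part of \cref{prop:stable-he-vs-homeo}, together with \cref{lem:ter-ks}, then gives the equivalence with the existence of $x \in \ker(\kappa_2^h)$ nontrivial in $E_{2,2}^\infty(\xi(\pi,w))$ and additionally satisfying $w \cap x = 0$. For the converse, the construction in \cref{prop:stable-he-vs-homeo} produces topological manifolds with matching Kirby--Siebenmann invariants, both equal to zero since one of the two may be taken $\xi$-nullbordant (hence smoothable). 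Such manifolds are stably smoothable, since a closed oriented topological $4$-manifold with $\ks = 0$ becomes smoothable after sufficiently many connected sums with $S^2 \times S^2$.

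The proof is essentially a bookkeeping exercise on top of \cref{prop:stable-he-vs-homeo}; the main points to verify carefully are the passage from stably homotopy equivalent to honestly homotopy equivalent pairs via stabilisation, and, in the smooth case, that the topological examples produced by \cref{prop:stable-he-vs-homeo} can indeed be realised smoothly when $w \cap x = 0$.
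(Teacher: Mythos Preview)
Your proposal is correct and follows the same strategy as the paper: both reduce directly to \cref{prop:stable-he-vs-homeo}, with the only nontrivial step being the smooth realisation in the non-strong case. The one difference is that you argue both manifolds have $\ks=0$ by inspecting the construction inside \cref{prop:stable-he-vs-homeo} (one manifold is taken $\xi$-nullbordant, forcing $\ks=0$, and the other matches it), whereas the paper works only from the \emph{statement} of \cref{prop:stable-he-vs-homeo} and so treats the common Kirby--Siebenmann value as a priori unknown, handling the case $\ks(M)=\ks(N)=1$ by connect summing both with the $E_8$-manifold. Your shortcut is valid but depends on the specific construction; the paper's argument is slightly more robust.
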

 
 \begin{proof}
     Most of the corollary follows directly from  \cref{prop:stable-he-vs-homeo}. We need to show that if there is a $w\in H^2(\pi;\Z/2)$ and $x\in \ker(\kappa_2^h)$ such that $x$ maps to a nontrivial element in $E_{2,2}^\infty(\xi(\pi,w))$ and $w\frown x=0$ then we can find \emph{smooth} manifolds representing $x$. \cref{prop:stable-he-vs-homeo} gives us a pair of stably homotopy equivalent but not stably homeomorphic manifolds $M,N$ with the same Kirby--Siebenmann invariant. If $ks(M)=ks(N)=0$, then $M$ and $N$ are stably smoothable. If $ks(M)=ks(N)=1$, then $M\#E_8$ and $N\#E_8$ are stably smoothable and $[M\#E_8]-[N\#E_8]=[M]-[N]=x$.  
 \end{proof}
 
 \begin{corollary}
 \label{cor:smallf22}
     Let $\pi$ be a group such that $E^\infty_{2,2}(\xi(\pi,0))=0$ and $E^\infty_{2,2}(\xi(\pi,w))\cong \Z/2$ for $0\neq w\in H^2(\pi;\Z/2)$. Then $\pi$ satisfies stable rigidity.
\end{corollary}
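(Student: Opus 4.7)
The plan is to apply \cref{cor:stable-rigidity}: stable rigidity holds for $\pi$ if and only if there is no pair $(w, x)$ with $w \in H^2(\pi;\Z/2)$, $x \in \ker(\kappa_2^h)$, $w \cap x = 0$, and $x$ mapping to a nonzero element of $E^\infty_{2,2}(\xi(\pi,w))$. I would then split into cases on whether $w = 0$.

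When $w = 0$, the hypothesis $E^\infty_{2,2}(\xi(\pi,0)) = 0$ immediately forces the image of $x$ to vanish, so this case is trivial. The substantive case is $w \neq 0$, where $E^\infty_{2,2}(\xi(\pi,w)) \cong \Z/2$. Write $p \colon H_2(\pi;\Z/2) \twoheadrightarrow E^\infty_{2,2}(\xi(\pi,w))$ for the projection. By \cref{lem:ter-ks}, the cap product map $w \cap - \colon H_2(\pi;\Z/2) \to H_0(\pi;\Z/2) = \Z/2$ factors through $p$, say $w \cap - = \overline{w} \circ p$ for some $\overline{w} \colon E^\infty_{2,2}(\xi(\pi,w)) \to \Z/2$. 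The crux will be to verify that $\overline{w}$ is an isomorphism; given this, any $x$ with $w \cap x = 0$ satisfies $\overline{w}(p(x)) = 0$ and hence $p(x) = 0$, completing the case.

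To identify $\overline{w}$ as an isomorphism, I would use that the Kronecker pairing $H^2(\pi;\Z/2) \otimes H_2(\pi;\Z/2) \to \Z/2$ is perfect over the field $\Z/2$ (immediate from universal coefficients), so $w \neq 0$ forces $w \cap - \neq 0$. Thus $\ker(w \cap -)$ has codimension one in $H_2(\pi;\Z/2)$. Since $\ker(p)$ also has codimension one (because the quotient is $\Z/2$) and is contained in $\ker(w \cap -)$, a dimension count gives $\ker(p) = \ker(w \cap -)$, so $\overline{w}$ is injective and therefore an isomorphism $\Z/2 \xrightarrow{\sim} \Z/2$. I do not expect a serious obstacle: once \cref{lem:ter-ks} and \cref{cor:stable-rigidity} are in hand, the argument reduces to a short linear-algebra identification using the perfect Kronecker pairing.
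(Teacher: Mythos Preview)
Your proposal is correct and follows essentially the same route as the paper's proof: apply \cref{cor:stable-rigidity}, dispose of $w=0$ using the hypothesis, and for $w\neq 0$ use \cref{lem:ter-ks} to factor $w\cap -$ through $E^\infty_{2,2}(\xi(\pi,w))\cong\Z/2$ and conclude that $[x]\neq 0$ forces $w\cap x\neq 0$. The paper compresses the last step into a single sentence, while you spell out the linear algebra showing $\overline{w}$ is an isomorphism; your more explicit justification via the nondegeneracy of the Kronecker pairing is a welcome addition, since the paper leaves implicit why the factored map $\overline{w}$ is nonzero. One minor remark: your ``dimension count'' phrasing suggests finite-dimensionality of $H_2(\pi;\Z/2)$, which does hold here since $\pi$ is finitely presented, but the conclusion $\ker(p)=\ker(w\cap-)$ already follows from $\ker(p)\subseteq\ker(w\cap-)$ together with both quotients being $\Z/2$, without any dimension hypothesis.
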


 \begin{proof}
     Let $x\in \ker(\kappa_2^h)$. Assume that there exists $w\in H^2(\pi;\Z/2)$ such that 
     $0\neq[x]\in E^\infty_{2,2}(\xi(\pi,w))$. Then by assumption $w\neq 0$. By \cref{lem:ter-ks}, $w$ factors through $E^\infty_{2,2}(\xi(\pi,w))\cong \Z/2$. Hence $w\frown x\neq 0$ and it follows from \cref{cor:stable-rigidity} that $\pi$ satisfies stable rigidity. 
 \end{proof}

\subsection{Stable rigidity for subgroups of odd index}
\label{ss:stable-rigidity-sylow}
 
We now consider some results relating stable rigidity of $\pi$ to its $2$-Sylow subgroup.

\begin{lemma}
\label{lem:surj-ker-kappa}
    Let $f\colon G\to \pi$ be a group homomorphism inducing a surjection on $\ker(\kappa_2^h)$. If $G$ satisfies (strong) stable rigidity, so does $\pi$.
\end{lemma}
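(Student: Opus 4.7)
The plan is to prove the contrapositive using the characterisation of failure of (strong) stable rigidity given by \cref{cor:stable-rigidity}. Suppose $\pi$ does not satisfy (strong) stable rigidity. Then there exists $w \in H^2(\pi;\Z/2)$ and $x \in \ker(\kappa_2^h(\pi))$ whose image in $E^\infty_{2,2}(\xi(\pi,w))$ is nontrivial, and in the strong case we may moreover choose $x$ with $w \cap x = 0$. We aim to produce analogous data $(w', y)$ for $G$ and then invoke \cref{cor:stable-rigidity} again to conclude that $G$ does not satisfy (strong) stable rigidity.

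First I would use the surjectivity hypothesis to lift: pick $y \in \ker(\kappa_2^h(G)) \subseteq H_2(G;\Z/2)$ with $f_*(y) = x$. Then set $w' := f^*(w) \in H^2(G;\Z/2)$, which is the natural candidate since pulling back along $f$ is compatible with the normal $1$-type construction $\xi(\pi,w)$ of \cref{def:Normal_1_type}. The homomorphism $f$ induces a map of James spectral sequences $E^{*}_{*,*}(\xi(G,w')) \to E^{*}_{*,*}(\xi(\pi,w))$; on the $(2,2)$-entry at the $E^2$-page this is simply $f_* \colon H_2(G;\Z/2) \to H_2(\pi;\Z/2)$, and it passes to a map on $E^\infty_{2,2}$.

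The key step is to check that $[y] \ne 0$ in $E^\infty_{2,2}(\xi(G,w'))$. This is immediate from naturality: the image of $[y]$ under the induced map $E^\infty_{2,2}(\xi(G,w')) \to E^\infty_{2,2}(\xi(\pi,w))$ equals $[f_* y] = [x]$, which is nonzero by assumption. In the strong case we moreover need $w' \cap y = 0$, which follows from the projection formula: the map $f_*\colon H_0(G;\Z/2) \to H_0(\pi;\Z/2)$ is an isomorphism, and
\[ f_*(f^*w \cap y) = w \cap f_*(y) = w \cap x = 0. \]

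Finally, applying \cref{cor:stable-rigidity} to the pair $(w', y)$ shows that $G$ does not satisfy (strong) stable rigidity, completing the contrapositive. The only potentially delicate point is the naturality of the James spectral sequence and of the pairing $w \cap -$ with respect to the map of fibrations induced by $f$, but both are standard consequences of the naturality of $\xi(\cdot,\cdot)$ in its input and of the classical projection formula.
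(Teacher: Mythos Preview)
Your proof is correct and takes essentially the same approach as the paper's, just phrased as a contrapositive. The only minor difference is that for the strong case the paper gives a shorter argument via \cref{prop:davis-intro} (strong stable rigidity holds iff $\kappa_2^h$ is injective, so surjectivity on kernels immediately forces $\ker(\kappa_2^h(\pi))=0$), whereas you treat both cases uniformly through \cref{cor:stable-rigidity}; either route is valid.
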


\begin{proof}
    If $G$ satisfies strong stable rigidity, then $\kappa^h_2$ is injective for $G$ and hence also for $\pi$ by assumption. Thus $\pi$ satisfies strong stable rigidity.

    Let $x\in H_2(\pi;\Z/2)$ be in $\ker(\kappa_2^h)$ and $w\in H^2(\pi;\Z/2)$ such that $w\frown x=0$. Then there exists a preimage $x'\in H_2(G;\Z/2)$ that also is in the kernel of $\kappa^h_2$. Since $G$ satisfies stable rigidity and $f^*w\frown x'=w\frown x=0$, $0=[x']\in E_{2,2}^\infty(\xi(G,f^*w))$. By naturality also $0=[x]\in E_{2,2}^\infty(\xi(\pi,w))$. It follows from \cref{prop:stable-he-vs-homeo} that $\pi$ satisfies stable rigidity.
\end{proof}

\begin{proposition}
\label{prop:oddindex}
    Let $G\leq \pi$ be a subgroup of odd index. 
    If $G$ satisfies (strong) stable rigidity, so does $\pi$.
\end{proposition}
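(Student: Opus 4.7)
The plan is to reduce immediately to \cref{lem:surj-ker-kappa}: it suffices to show that the inclusion $i\colon G \hookrightarrow \pi$ induces a surjection $i_*\colon \ker(\kappa_2^h(G)) \twoheadrightarrow \ker(\kappa_2^h(\pi))$ (and the analogous statement for $\kappa_2^s$ in the simple case). Once this is established, the lemma does all the work.

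To produce the required surjection, I would use the transfer map $\tr\colon H_2(\pi;\Z/2) \to H_2(G;\Z/2)$ associated to the finite-index inclusion. The standard fact that $i_* \circ \tr$ equals multiplication by the index $[\pi:G]$ together with the hypothesis that $[\pi:G]$ is odd shows that $i_* \circ \tr = \id$ on $H_2(-;\Z/2)$. So $\tr$ is a set-theoretic section of $i_*$; the only remaining question is whether it sends $\ker(\kappa_2^h(\pi))$ into $\ker(\kappa_2^h(G))$.

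This is precisely the compatibility statement provided by \cref{prop:transfer-l-theory}: there exists a transfer $p^*\colon L_4^h(\Z\pi) \to L_4^h(\Z G)$ with $p^*(\kappa_2^h(y)) = \kappa_2^h(\tr(y))$ for every $y \in H_2(\pi;\Z/2)$. Hence if $x \in \ker(\kappa_2^h(\pi))$, then
\[
\kappa_2^h(\tr(x)) = p^*(\kappa_2^h(x)) = p^*(0) = 0,
\]
so $\tr(x) \in \ker(\kappa_2^h(G))$ and $i_*(\tr(x)) = x$. This gives the desired surjection, and the identical argument with $\kappa_2^s$ in place of $\kappa_2^h$ covers the simple case. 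Applying \cref{lem:surj-ker-kappa} then yields the proposition.

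There is no real obstacle here beyond the routine verification that the transfer in group homology and the transfer in $L$-theory intertwine the assembly map $\kappa_2^h$ — and that is exactly the content of \cref{prop:transfer-l-theory}, so the argument is essentially immediate given the tools already assembled in \cref{sec:background}.
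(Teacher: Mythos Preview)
Your proposal is correct and matches the paper's proof essentially line for line: reduce to \cref{lem:surj-ker-kappa} by showing $i_*$ surjects onto $\ker(\kappa_2^h(\pi))$, use the transfer $\tr$ as a section (since $i_*\circ\tr$ is multiplication by the odd index, hence the identity mod $2$), and invoke \cref{prop:transfer-l-theory} to check that $\tr$ preserves the kernel of $\kappa_2^h$.
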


\begin{proof}
Let $i\colon G\to\pi$ be the inclusion. Let $x\in H_2(\pi;\Z/2)$ be in the kernel of $\kappa^h_2$. Let $x':=\tr(x)\in H_2(G;\Z/2)$, where $\tr$ is the transfer map. Since $G$ has odd index, $\tr\circ i_*$ is the identity and thus $x'$ is a preimage of $x$. By \cref{prop:transfer-l-theory}, $\kappa^h_2(x')=p^*(\kappa^h_2(x))=0$. Hence $i$ induces a surjection on the kernel of $\kappa^h_2$. The proposition now follows from \cref{lem:surj-ker-kappa}.
\end{proof}

\begin{proposition}\label{prop:odd-order-normal-sg}
	Let $1\to P\to \pi\to G\to 1$ be a short exact sequence with $P$ a finite group of odd order. 
	If $G$ satisfies (strong) stable rigidity, then $\pi$ satisfies (strong) stable rigidity. 
	
	Moreover, if the above sequence splits, then the converse holds. That is, $G$ satisfies (strong) stable rigidity if and only if $\pi$ satisfies (strong) stable rigidity. 
\end{proposition}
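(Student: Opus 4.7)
The plan is to handle the two implications separately, appealing to \cref{lem:surj-ker-kappa} for the converse in the split case and to \cref{cor:stable-rigidity} for the forward direction. The shared technical input is that, since $|P|$ is odd, $H_*(BP;\Z/2)=0$ for $*>0$; the Serre spectral sequence for $BP\to B\pi\to BG$ thus collapses with mod-$2$ coefficients, so $p\colon\pi\to G$ induces isomorphisms $p_*\colon H_n(\pi;\Z/2)\xrightarrow{\cong} H_n(G;\Z/2)$ and dually $p^*$ on mod-$2$ cohomology, while $p_*$ is a $\Z_{(2)}$-homology equivalence on integer homology.

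For the direction $G$ rigid $\Rightarrow\pi$ rigid, I would argue by contradiction. A failure of (strong) stable rigidity for $\pi$ produces, via \cref{cor:stable-rigidity}, a class $w\in H^2(\pi;\Z/2)$ and $x\in\ker\kappa_2^h(\pi)$ such that $[x]$ is nontrivial in $E^\infty_{2,2}(\xi(\pi,w))$ (with the extra condition $w\cap x=0$ when required). Setting $w':=(p^*)^{-1}(w)\in H^2(G;\Z/2)$ and $y:=p_*(x)\in H_2(G;\Z/2)$, naturality of $\kappa_2^h$ places $y$ in $\ker\kappa_2^h(G)$, while naturality of the cap product gives $w'\cap y=w\cap x$, preserving the extra condition. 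By naturality of the James spectral sequence, $p$ sends $[x]$ to $[y]$ on $E^\infty_{2,2}$, and I would show the induced map on $E^\infty_{2,2}$ is an isomorphism; then $[y]\neq 0$ contradicts rigidity of $G$ via \cref{cor:stable-rigidity}. All outgoing differentials from $E^r_{2,2}$ vanish for degree reasons (the only candidate target is $H_0(-;\Omega_3^{\topspin})=0$), so the claim reduces to matching the images of the incoming $d_2\colon H_4(-;\Z/2)\to H_2(-;\Z/2)$ and $d_3\colon E^3_{5,0}\to E^3_{2,2}$. For $d_2$, naturality of $\Sq^2_w$ together with $p^*w'=w$ gives a commutative square with $p_*$ an iso on both source and target, so the images coincide. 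For $d_3$, the target is $\Z/2$-torsion, so only the $2$-primary part of the source is relevant, and there $p_*$ is an iso.

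For the converse in the split case, fix a section $s\colon G\to\pi$. I would apply \cref{lem:surj-ker-kappa} to $p\colon\pi\to G$ (swapping the roles of $G$ and $\pi$ in that lemma); it suffices to check that $p_*\colon\ker\kappa_2^h(\pi)\twoheadrightarrow\ker\kappa_2^h(G)$ is surjective. Given $y\in\ker\kappa_2^h(G)$, the element $x:=s_*(y)$ lies in $\ker\kappa_2^h(\pi)$ by naturality of $\kappa_2^h$ along $s$, and $p_*(x)=y$ since $p\circ s=\id_G$. The main obstacle will be the spectral-sequence analysis in the forward direction, in particular handling the $d_3$-differential whose source $E^3_{5,0}$ sits inside $H_5(-;\Z)$, where $p_*$ is only a $2$-local isomorphism; that the target of $d_3$ is a $\Z/2$-module should allow one to reduce to the $2$-primary parts and complete the matching.
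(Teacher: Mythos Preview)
Your argument is correct. The forward direction follows the paper's route almost verbatim: both reduce to comparing $E^\infty_{2,2}$ for $\xi(\pi,w)$ and $\xi(G,w')$ via naturality of the James spectral sequence, using that $p_*$ is a mod-$2$ homology isomorphism. The only difference is that the paper observes merely $p_*(\im d_3^\pi)\subseteq \im d_3^G$, giving an \emph{injection} $E^\infty_{2,2}(\pi,w)\hookrightarrow E^\infty_{2,2}(G,w')$, which already suffices to push the nontrivial class $[x]$ to a nontrivial $[y]$; your $2$-localisation argument for $d_3$ proves the stronger statement that $p_*$ is an isomorphism on $E^\infty_{2,2}$, which is fine but more than needed. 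For the split converse you take a genuinely different (and shorter) path: you invoke \cref{lem:surj-ker-kappa} directly, using the section $s$ to show $p_*\colon\ker\kappa_2^h(\pi)\to\ker\kappa_2^h(G)$ is surjective. The paper instead uses splitting to get surjectivity of $p_*\colon H_5(\pi;\Z)\to H_5(G;\Z)$, upgrades the injection on $E^\infty_{2,2}$ to an isomorphism, and then appeals to \cref{prop:stable-he-vs-homeo}. Your route is cleaner here and avoids the second spectral-sequence pass.
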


\begin{proof}
If $\pi$ does not satisfy stable rigidity, then \cref{prop:stable-he-vs-homeo} implies that there exists $w\in H^2(\pi;\Z/2)$ and $x\in \ker(\kappa_2^h) \leq H_2(\pi;\Z/2)$ such that the image of $x$ is non-trivial in $F_{2,2}\leq \Omega_4(\xi(\pi,w))$.
Similarly, if $\pi$ does not satisfy strong stable rigidity, then there exists $w\in H^2(\pi;\Z/2)$ and $x\in \ker(\kappa_2^h) \cap \ker(-\frown w) \leq H_2(\pi;\Z/2)$ with the corresponding properties.
 
	Since $\wt{H}_*(P;\Z/2)=0$, the Lyndon-Hochschild-Serre spectral sequence gives us the isomorphism of the reduced homology $p_*\colon\wt{H}_*(\pi;\Z/2)\to \wt{H}_*(G,\Z/2)$ induced by the projection $p\colon \pi\to G$. Dually, $p^*$ is an isomorphism and there exists $w'\in H^2(G;\Z/2)$ with $p^*w'=w$.
	
	We now compare the spectral sequences for $\Omega_4(\xi(\pi,w))$ and $\Omega_4(\xi(G,w'))$.
	Recall that the map
$d_2\colon H_4(\pi;\Z/2)\to H_2(\pi;\Z/2)$ 
 is given by $\Sq_2+-\frown w$. We have
\[\Sq_2(p_*y)+p_*y\frown w'=p_*(\Sq_2y+y\frown p^*w')=p_*(\Sq_2y+y\frown w).\]
	Since $p_*\colon H_4(\pi;\Z/2) \to H_4(G;\Z/2)$ is an isomorphism, we have an isomorphism 
 \[ H_2(\pi;\Z/2)/\im d_2\xrightarrow{\cong}H_2(G;\Z/2)/\im d_2.\]
	Consider the commutative diagram
	\[\begin{tikzcd}
		\ker(d_2\colon H_5(\pi;\Z)\to H_3(\pi;\Z/2))\ar[d,"d_3"]\ar[r]&\ker(d_2\colon H_5(G;\Z)\to H_3(G;\Z/2))\ar[d,"d_3"]\\
		H_2(\pi;\Z/2)/\im d_2\ar[r,"\cong"]&H_2(G;\Z/2)/\im d_2
	\end{tikzcd}\]
	We get $p_*\im(d_3)\subset \im(d_3)\subset H_2(G;\Z/2)/\im d_2$. This shows that $p$ induces an injection 
	\[H_2(\pi;\Z/2)/d_2d_3\xrightarrow{p_*}H_2(G;\Z/2)/d_2d_3.\]
	
	It follows the image of $p_*(x)\in \ker(\kappa_2^h(G))$ in $H_2(G;\Z/2)/d_2d_3$ is non-trivial. If $x\frown w=0$, then $p_*x\frown w'=p_*(x\frown p^*w')=p_*(x\frown w)=0$. Hence $G$ does not satisfy (strong) stable rigidity by \cref{prop:stable-he-vs-homeo}. This proves the first part of the corollary.
	
	We now assume that the short exact sequence splits. In this case $p_*\colon H_5(\pi;\Z)\to H_5(G;\Z)$ is surjective and it follows from the commutative diagram above, that 
\[
H_2(\pi;\Z/2)/d_2d_3\xrightarrow{p_*}H_2(G;\Z/2)/d_2d_3\] 
 is an isomorphism. Moreover, it follows from the naturality of $\kappa_2^h$ that $p_*$ restrict to an isomorphism on the kernels of $\kappa_2^h$. Thus the second part of the corollary again follows from \cref{prop:stable-he-vs-homeo}.
\end{proof}
	 
 \subsection{Abelian groups}
 \label{ss:abelian}
 
	In this subsection we prove \cref{thm:abelian}. We first show the following lemma which will allow us to reduce to the case of finite abelian groups.
 
    \begin{lemma}
    \label{lem:abelian0}
        Let $A\cong \bigoplus_{i=1}^kC_{n_i}$ be a finitely generated abelian group with $C_{n_i}$ cyclic of order $n_i\in \N\cup\{\infty\}$. Then $\ker(\kappa_2^h)=\ker(\kappa_2^s)$ and
\[x=((x_i)_i,(y_{ij})_{ij})\in \bigoplus_{i=1}^kH_2(C_{n_i};\Z/2)\oplus \bigoplus_{i\neq j}\left(H_1(C_{n_i};\Z/2)\otimes H_1(C_{n_j};\Z/2)\right)\cong H_2(A;\Z/2)\]
is in the kernel of $\kappa_2^h$ if and only if $y_{ij}=0$ for all $i,j$ such that $4$ divides $n_i$ or $n_j$ (which includes the case that one of them is $\infty$). 
    \end{lemma}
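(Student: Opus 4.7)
The plan is to combine the Künneth decomposition of $H_2(A;\Z/2)$ with the naturality of $\kappa_2^h$ and $\kappa_2^s$ under group homomorphisms, reducing to subgroups of rank at most two, and then to invoke known $L$-theoretic computations.

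First, the decomposition of $H_2(A;\Z/2)$ is standard Künneth. Consider the coordinate projections $p_i\colon A \twoheadrightarrow C_{n_i}$ and $p_{ij}\colon A \twoheadrightarrow C_{n_i}\oplus C_{n_j}$, each of which admits a splitting by the inclusion of the corresponding direct summand. Naturality of $\kappa_2^{h,s}$ together with the splittings implies that the induced maps on $L_4^{h,s}$ are split injections, and that the induced maps on $H_2(-;\Z/2)$ respect the Künneth decomposition. Consequently, $x\in \ker\kappa_2^{h,s}(A)$ if and only if each component $x_i$ lies in $\ker\kappa_2^{h,s}(C_{n_i})$ and each pairing $x_i+x_j+y_{ij}$ lies in $\ker\kappa_2^{h,s}(C_{n_i}\oplus C_{n_j})$. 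This reduces the lemma to the one- and two-generator cases.

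For a single cyclic factor, $H_2(C_n;\Z/2)=0$ when $n$ is odd, and $\Z/2$ when $n$ is even. For $n$ even, Wall's multisignature isomorphism shows that $L_4^h(\Z[C_n])$ is torsion-free, so $\kappa_2^h$ must vanish on the $2$-torsion group $H_2(C_n;\Z/2)$. The same argument works for $\kappa_2^s$: the Rothenberg sequence relating $L_4^s$ and $L_4^h$ has kernel and cokernel controlled by Tate cohomology of $\Wh(C_n)$, but for the element in question this obstruction is zero, since any lift to $L_4^s$ is detected by the same multisignature. Hence the $x_i$-components are automatically in $\ker\kappa_2^h=\ker\kappa_2^s$, irrespective of parity of $n_i$.

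For the cross term $y_{ij}\in H_1(C_{n_i};\Z/2)\otimes H_1(C_{n_j};\Z/2)$, the computation is substantive. If at least one of $n_i,n_j$ is infinite, then $C_{n_i}\oplus C_{n_j}$ has a free abelian quotient of rank two for which the Farrell--Jones conjecture holds, and $\kappa_2^h$ is injective on the relevant component; this matches the convention $4\mid \infty$. For finite $n_i,n_j$, the nonvanishing of $\kappa_2^h(y_{ij})$ precisely when $4$ divides one of $n_i,n_j$ follows from the Hambleton--Milgram--Taylor--Williams calculation of the surgery assembly map for finite abelian groups: the product class $y_{ij}$ maps to an Arf-type element of $L_4^h(\Z[C_{n_i}\oplus C_{n_j}])$ that is nontrivial exactly when a $\Z/4$-subquotient is present on one factor (realized geometrically by a surgery obstruction on a cross-product of lens spaces). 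In each case the obstructing class lifts canonically to $L_4^s$ so that $\ker\kappa_2^s=\ker\kappa_2^h$ on these components as well.

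The main obstacle is the third step: pinning down the precise divisibility-by-$4$ condition requires the explicit computation of the image of the cross terms in $L_4(\Z[C_n\oplus C_m])$, together with the comparison between $h$ and $s$ decorations. The reduction in the first step is formal, while the cyclic case reduces to torsion-freeness of $L_4^h(\Z[C_n])$.
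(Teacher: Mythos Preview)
Your overall strategy---reduce via naturality of $\kappa_2$ to rank-at-most-two direct summands and then invoke known computations---is the same as the paper's. However, there is one genuine gap, and the paper's logical structure for the equality of kernels is cleaner than yours.

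\textbf{The infinite case.} Your claim that if at least one of $n_i,n_j$ is infinite then $C_{n_i}\oplus C_{n_j}$ has a free abelian quotient of rank two is false: if $n_i=\infty$ and $n_j$ is finite, the maximal free abelian quotient has rank one, and $H_2(\Z;\Z/2)=0$ detects nothing. Moreover, even when the Farrell--Jones conjecture holds for a group with torsion (e.g.\ $\Z\times C_2$), it does not directly give injectivity of $\kappa_2^h$. The paper's fix, which treats the finite and infinite cases uniformly, is to note that whenever $4\mid n_i$ or $n_i=\infty$ there is a surjection $C_{n_i}\to C_4$, hence a projection $C_{n_i}\oplus C_{n_j}\to C_4\times C_2$ carrying $y_{ij}$ to the nonzero class; nonvanishing of $\kappa_2^h$ on $C_4\times C_2$ (Morgan--Pardon, written up in Milgram--Ranicki) then finishes.

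\textbf{The equality $\ker\kappa_2^s=\ker\kappa_2^h$.} Your Rothenberg argument for the cyclic case is not sufficient as stated: knowing $\kappa_2^h=0$ from torsion-freeness of $L_4^h(\Z C_n)$ does not by itself give $\kappa_2^s=0$, since $\kappa_2^h$ factors \emph{through} $\kappa_2^s$ (not conversely), and the kernel of $L_4^s\to L_4^h$ can be nonzero. The paper sidesteps this with a cleaner asymmetric argument: it proves the \emph{in-kernel} direction for $\kappa_2^s$ (citing Taylor--Williams directly for cyclic groups and for $C_2\times C_2$, then using inclusion naturality to push these classes into $A$) and the \emph{not-in-kernel} direction for $\kappa_2^h$ (projection to $C_4\times C_2$ as above). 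Since $\ker\kappa_2^s\subseteq\ker\kappa_2^h$ always, this squeezes both kernels to the set described.

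\textbf{The $C_2\times C_2$ case.} You fold the vanishing when neither $n_i$ nor $n_j$ is divisible by $4$ into the HMTW reference, but it is worth making explicit: when both $n_i,n_j\equiv 2\pmod 4$, the cross term $y_{ij}$ lies in the image of the inclusion $C_2\times C_2\hookrightarrow C_{n_i}\oplus C_{n_j}$ (the inclusion is an isomorphism on $H_1(-;\Z/2)$ since $n_i/2$ is odd), and $\kappa_2^s(C_2\times C_2)=0$ because $L_4^s(\Z[C_2^2])$ is torsion-free.
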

    
 \begin{proof}
    First note that $H_1(C_{n_i};\Z/2)\otimes H_1(C_{n_j};\Z/2)=0$ if $n_i$ or $n_j$ is odd. In particular, $y_{ij}=0$ if $n_i$ or $n_j$ is odd.
    Also note that $\ker(\kappa_2^s)\subseteq\ker(\kappa_2^h)$. Hence the statement follows if we show the following two statements for every $x\in H_2(A;\Z/2)$ as in the lemma statement.
    \begin{enumerate}
        \item If $y_{ij}=0$ for all $i,j$ such that $4$ divides $n_i$ or $n_j$, then $x\in \ker(\kappa_2^s)$.
        \item If there exist $i, j$ such that $4$ divides $n_i$ or $n_j$ with $y_{i,j}\neq 0$, then $x\notin \ker(\kappa_2^h)$.
    \end{enumerate}

     By \cite[Theorem 4.1(b)]{TW80}, $\kappa_2^s(G)$ is trivial for $G$ cyclic or $G\cong C_2\times C_2$. 
     (This also follows from \cite[Proposition 7.4]{HMTW88} for $G$ cyclic and, for $G = C_2^2$, from the fact that $H_2(C_2^2;\Z/2)$ is finite and $L_4^s(\Z[C_2^2])$ is torsion-free by \cite[Theorem 3.4.5]{Wa76}.)
     Hence if $y_{ij}=0$ for all $i,j$ such that $4$ divides $n_i$ or $n_j$, $x$ is in the kernel of $\kappa_2^s$. This shows (1). 

    We now show (2). Under the projection $p\colon A\to C_{n_i}\times C_{n_j}$ we have $p_*\kappa_2^h(x)=\kappa_2^h(y_{ij})$. Hence it suffices to show that $\kappa_2^h(y_{ij})\neq 0$ for all $i,j$ such that $4$ divides $n_i$ or $n_j$. By assumption there is a projection $p\colon A\to C_4\times C_2$ such that $p_*(y_{ij})\neq 0\in H_1(\Z/2;\Z/2)\otimes H_1(\Z/4;\Z/2)$. On $p_*(y_{ij})$,  $\kappa_2^h$ is non-trivial as shown by Morgan and Pardon (unpublished), see \cite[Section~4]{Milgram-ranicki} for a proof.
 \end{proof}
 
	\begin{corollary}
		\label{lem:abelian1}
		Let $A$ be a finitely generated abelian group. Let $x\in H_2(A;\Z/2)$ be in the kernel of $\kappa_2^h$. Then there exists a finite subgroup $A'$ such that $x$ is the image of some $x'\in \ker(\kappa_2^h(A'))$. Furthermore $\alpha\frown x'=0$ for every $\alpha\in H^2(A';\Z/2)$ with $\alpha^2=0$.
	\end{corollary}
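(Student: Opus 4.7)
The plan is to combine the description of $\ker(\kappa_2^h(A))$ from \cref{lem:abelian0} with the K\"unneth decomposition of the (co)homology of a product of cyclic groups. Write $A \cong \bigoplus_{i=1}^k C_{n_i}$ and decompose $x = ((x_i),(y_{ij}))$ as in that lemma. Since $H_2(\Z;\Z/2)=0$ and the lemma forces $y_{ij}=0$ whenever $n_i$ or $n_j$ equals $\infty$, the class $x$ lies in the image of the inclusion $H_2(A';\Z/2)\hookrightarrow H_2(A;\Z/2)$, where $A'$ denotes the (finite) subgroup $\bigoplus_{n_i<\infty} C_{n_i}$. Letting $r\colon A\to A'$ be the canonical retraction and $x':=r_*(x)$, naturality of $\kappa_2^h$ gives $\kappa_2^h(A')(x')=r_*(\kappa_2^h(A)(x))=0$, proving the first assertion.

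For the second assertion, decompose $\alpha\in H^2(A';\Z/2)$ K\"unneth-style as $\alpha=\sum_i\alpha_i+\sum_{i<j}\beta_{ij}$ with $\alpha_i\in H^2(C_{n_i};\Z/2)$ and $\beta_{ij}\in H^1(C_{n_i};\Z/2)\otimes H^1(C_{n_j};\Z/2)$. Working mod $2$, all cross terms in $\alpha\cup\alpha$ carry a coefficient of $2$, leaving $\alpha^2=\sum_i\alpha_i^2+\sum_{i<j}\beta_{ij}^2$; these summands lie in distinct K\"unneth components of $H^4(A';\Z/2)$ and so must vanish independently whenever $\alpha^2=0$. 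Now invoke the standard computation of $H^*(C_n;\Z/2)$: for every even $n$ the generator of $H^2$ squares nontrivially, while the generator $\gamma$ of $H^1$ satisfies $\gamma^2=0$ if and only if $4\mid n$. Writing $\beta_{ij}=b_{ij}(\gamma_i\otimes\delta_j)$, the condition $\alpha^2=0$ forces $\alpha_i=0$ for all $i$, and forces $b_{ij}=0$ whenever $4\nmid n_i$ and $4\nmid n_j$.

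Finally, the cap product distributes over the K\"unneth decomposition, giving $\alpha\cap x'=\sum_i\alpha_i\cap x'_i+\sum_{i<j}\beta_{ij}\cap y'_{ij}$. The first sum vanishes because each $\alpha_i=0$. For the second, either $y'_{ij}=0$ (in which case the term is trivial), or $y'_{ij}\neq 0$, and then \cref{lem:abelian0} applied to $x'\in\ker(\kappa_2^h(A'))$ forces $4\nmid n_i$ and $4\nmid n_j$, whence $b_{ij}=0$ and the term vanishes as well. The main obstacle is essentially bookkeeping: one must recognize that the cohomological condition ``$\gamma_i^2\neq 0$'' (equivalently $4\nmid n_i$) aligns precisely with the condition under which \cref{lem:abelian0} permits $y_{ij}$ to be nonzero in $\ker(\kappa_2^h)$; once this alignment is noticed, the rest of the argument is forced.
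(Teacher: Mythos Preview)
Your proof is correct and follows essentially the same approach as the paper. The only cosmetic difference is that you work directly with the K\"unneth decomposition of $H^*(A';\Z/2)$, whereas the paper phrases the second half via quotient maps $A' \to C_{4k} \times C_{2l}$; both arguments reduce to the observation that the generator of $H^1(C_{n};\Z/2)$ squares to zero precisely when $4 \mid n$.
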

 
	\begin{proof}
        Since $H_2(C_\infty;\Z/2)=0$, the existence of $x'$ follows from \cref{lem:abelian0}.
 
		We now show that $\alpha\frown x'=0$ for every $\alpha\in H^2(A';\Z/2)$ with $\alpha^2=0$. If $\alpha^2=0$, then $\alpha=\sum_{i,j} \alpha_i\alpha_j$ with $\alpha_i,\alpha_j\in H_1(A';\Z/2)$ and $\alpha_i^2=0$. It suffices to show $(\alpha_i\alpha_j)\frown x'=0$ for all such $\alpha_i,\alpha_j$ or equivalently, that $-\frown x'$ is trivial on the image of every $H^1(C;\Z/2)\otimes H^1(C';\Z/2)$ with $C\cong \Z/4k$ and $C'\cong \Z/2l$ for every map $A'\to C\times C'$. This again follows from \cref{lem:abelian0}.
	\end{proof}
 
	\begin{lemma}
		\label{lem:abelian2}
		Let $A$ be a finite abelian group, Let $f\colon H^2(A;\Z/2)\to \Z/2$ be a map that sends $\alpha$ to zero if $\alpha^2=0$. Let $w\in H^2(A;\Z/2)$ be some element with $f(w)=0$. Then there exists a map $f'\colon H^4(A;\Z/2)\to \Z/2$ such that $f'\circ (\Sq^2+-\smile w)=f$.
	\end{lemma}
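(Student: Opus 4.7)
The plan is to show that $f$ vanishes on $\ker(\Sq^2 + -\cup w) \subseteq H^2(A;\Z/2)$, after which $f'$ can be defined on the image of $\Sq^2 + -\cup w$ by $f'(\Sq^2\alpha + \alpha\cup w) := f(\alpha)$ and extended linearly to $H^4(A;\Z/2)$ using any $\Z/2$-linear splitting. First I note that $\alpha \mapsto \alpha\cup\alpha$ is $\Z/2$-linear on $H^2$ (expand $(\alpha+\beta)^2$ in characteristic~$2$ and use $\alpha\cup\beta = \beta\cup\alpha$ in even degree), so $\Sq^2 + -\cup w$ really is a linear map and its kernel is a subspace. Since $H^*(A;\Z/2)$ depends only on the $2$-primary part of $A$, I may replace $A$ by $C_2^r \times \prod_{j=1}^s C_{2^{a_j}}$ with each $a_j \geq 2$.

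Next I would record, via Künneth, that
\[
H^*(A;\Z/2) = \Z/2[u_1, \dots, u_r] \otimes \bigotimes_{j=1}^s \bigl(\Lambda(e_j) \otimes \Z/2[v_j]\bigr),
\]
where $|u_i| = |e_j| = 1$ and $|v_j| = 2$. Let $P$ be the polynomial subring generated by the $u_i$ and $v_j$, and let $\pi\colon H^*(A;\Z/2) \to P$ be the ring homomorphism sending each $e_j$ to $0$. The basis of $H^2$ then splits as $W \oplus V_{\mathrm{sq}}$, where $W := P^2$ has basis $\{u_iu_k : i \leq k\} \cup \{v_j\}$ and $V_{\mathrm{sq}}$ has basis $\{u_ie_j\} \cup \{e_je_l : j < l\}$. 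Each basis element of $V_{\mathrm{sq}}$ squares to zero (it contains an exterior factor), so by linearity of squaring $V_{\mathrm{sq}} \subseteq \ker\Sq^2$. Conversely, the squares of the basis of $W$ are distinct monomials of $P$, so $\Sq^2|_W$ is injective and $\ker\Sq^2 = V_{\mathrm{sq}}$.

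The main step is to show $\ker(\Sq^2 + -\cup w) \subseteq V_{\mathrm{sq}} + \langle w \rangle$. Given $\alpha$ in the kernel, I would decompose $\alpha = \alpha_0 + \alpha_1$ and $w = w_0 + w_1$ with $\alpha_0, w_0 \in V_{\mathrm{sq}}$ and $\alpha_1, w_1 \in W$, and apply the ring homomorphism $\pi$ to the equation $\alpha^2 + \alpha \cup w = 0$. Since $\pi$ vanishes on $V_{\mathrm{sq}}$ and is the identity on $W$, this collapses to
\[
\alpha_1^2 + \alpha_1 w_1 \;=\; \alpha_1(\alpha_1 + w_1) \;=\; 0 \in P.
\]
The crux of the argument is that $P$ is a polynomial ring over $\Z/2$, hence an integral domain, forcing $\alpha_1 \in \{0, w_1\}$. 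In either case $\alpha \in V_{\mathrm{sq}} + \langle w \rangle$ (using $w - w_1 = w_0 \in V_{\mathrm{sq}}$), and the hypotheses $f|_{V_{\mathrm{sq}}} = 0$ together with $f(w) = 0$ give $f(\alpha) = 0$ by linearity. The only genuine content is the integral-domain step; the rest is bookkeeping about the explicit structure of $H^*(A;\Z/2)$, and the main thing to verify carefully is that the polynomial projection $\pi$ indeed picks out the complement of $\ker \Sq^2$ in degree~$2$.
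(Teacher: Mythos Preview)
Your proof is correct and follows essentially the same approach as the paper's. The paper's argument is much terser: it simply asserts that $\alpha(\alpha+w)=0$ in $H^*(A;\Z/2)$ forces either $\alpha^2=0$ or $(\alpha+w)^2=0$ ``by the structure of $H^*(A;\Z/2)$'', and then concludes exactly as you do. Your explicit polynomial/exterior decomposition and the projection $\pi$ onto the polynomial part $P$ are precisely what is needed to justify that assertion, via the integral-domain property of $P$; so you have unpacked the one nontrivial step that the paper leaves implicit.
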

 
	\begin{proof}
		As $\Sq^2+-\smile w$ is linear, it suffices to show that the kernel of this map is contained in the kernel of $f$. Let $\alpha\in H^2(A;\Z/2)$ satisfy $0=(\Sq^2(\alpha)+\alpha\smile w)=\alpha(\alpha+w)$. By the structure of $H^*(A;\Z/2)$ this implies either $\alpha^2=0$ and thus $f(\alpha)=0$ or $(\alpha+w)^2=0$. In the latter case $f(\alpha)=f(w)=0$.
	\end{proof}
 
	\begin{proof}[Proof of \cref{thm:abelian}]
		Let $M$ and $M'$ be stably homotopy equivalent with fundamental group $A$. By \cref{rem:ssr}~\eqref{it:ssr-b}, we can assume that $M$ and $M'$ are almost spin. Let $\xi(A,w')$ be their normal $1$-type. Then there are normal $1$-smoothings such that $\ter(M'\sqcup \ol{M})\in F_{2,2}(\xi(A,w'))$ is the image of an element $t'\in \ker(\kappa_2^h)$. If $\ks(M)=\ks(M')$, then $w'\frown t'=0$ by \cref{lem:ter-ks}. By \cref{lem:abelian1}, there is a finite subgroup $A'$ of $A$ such that $t'$ is the image of some $t\in H_2(A';\Z/2)$. We now show that $t$ is in the image of $d_2=(\Sq^2+-\smile w)^*$, where $w$ is the restriction of $w'$ to $A'$. We have $w\frown t=0$. Furthermore, $\alpha\frown t=0$ for every $\alpha\in H^2(A';\Z/2)$ with $\alpha^2=0$. By \cref{lem:abelian2}, $t$ is in the image of $d_2$.
		
		Thus also $t'$ is in the image of $d_2$ and $M$ and $M'$ admit bordant normal $1$-smoothings. Hence they are stably homeomorphic by \cite[Theorem~C]{surgeryandduality}.
	\end{proof}
 
Combining \cref{thm:abelian} with \cref{prop:oddindex} we obtain the following result.

\begin{corollary}
\label{cor:piAbelianStabRig}
    Let $\pi$ be a finite group with abelian $2$-Sylow subgroup, then $\pi$ satisfies stable rigidity.
\end{corollary}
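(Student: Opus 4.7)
The statement is essentially a two-line application of the two named results, so the plan is simply to put the pieces together. Let $S \le \pi$ be a $2$-Sylow subgroup. By hypothesis $S$ is abelian, and since $\pi$ is finite, $S$ is finitely generated. Therefore \cref{thm:abelian} applies and $S$ satisfies stable rigidity.

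By definition of a $2$-Sylow subgroup, the index $[\pi : S]$ equals $|\pi|/|S|$, which is odd. Hence \cref{prop:oddindex} applies to the inclusion $S \le \pi$, and we conclude that $\pi$ also satisfies stable rigidity. There is no genuine obstacle here; the content is entirely in the two quoted results, and the corollary is just the observation that their hypotheses match up when $\pi$ is finite with abelian Sylow $2$-subgroup.
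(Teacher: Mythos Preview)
Your proof is correct and follows exactly the approach indicated in the paper: apply \cref{thm:abelian} to the abelian Sylow $2$-subgroup $S$, then use \cref{prop:oddindex} with the odd-index inclusion $S \le \pi$.
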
 

 \subsection{Quaternion groups}\label{ss:quaternion}

The following determines stable rigidity for finite groups whose Sylow $2$-subgroup is quaternion.

\begin{proposition}
\label{prop:quaternion}
	Let $\pi$ be a finite group such that its $2$-Sylow subgroup $Q$ is quaternionic.
\begin{clist}{(i)}
\item If $|Q|>8$ and $\pi\cong P\rtimes Q$ with $P$ a group of odd order, then $\pi$ does not satisfy stable rigidity.
		\item If $|Q|=8$ and $\pi\cong P\rtimes Q$ with $P$ a group of odd order, then $\pi$ satisfies stable rigidity but not strong stable rigidity.
		\item If $\pi$ is not of the form $P\rtimes Q$, then $\pi$ satisfies strong stable rigidity.
	\end{clist}
\end{proposition}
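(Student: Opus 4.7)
The plan proceeds case by case. For (i) and (ii), I would exploit \cref{prop:odd-order-normal-sg}: since $\pi \cong P \rtimes Q$ is a split extension with $|P|$ odd, $\pi$ satisfies (strong) stable rigidity if and only if $Q$ does. Thus (i) reduces to showing that $Q_{2^n}$ fails stable rigidity for $n \geq 4$, and (ii) reduces to showing that $Q_8$ satisfies stable rigidity but not strong stable rigidity. For the reduced form of (i), I would cite Teichner's construction \cite[Example~5.2.4]{teichnerthesis}, which produces pairs of homotopy equivalent but not stably diffeomorphic smooth $4$-manifolds with fundamental group $Q_{2^n}$ for $n\geq 4$; via \cref{thm:main-top}(iii)--(iv) this directly witnesses failure of stable rigidity.

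For the reduced form of (ii), I would first use the classical computation $H_2(Q_8;\Z/2) \cong (\Z/2)^2$ (from universal coefficients and $Q_8^{\mathrm{ab}} \cong C_2 \times C_2$) together with the surgery-theoretic computation of $L_4^h(\Z Q_8)$ in \cite{HM80} to identify a nonzero element of $\ker(\kappa_2^h(Q_8))$; failure of strong stable rigidity then follows from \cref{prop:davis-intro}. To establish that stable rigidity nevertheless holds I would invoke \cref{cor:stable-rigidity} and check that every $x \in \ker(\kappa_2^h)$ with $w \cap x = 0$ maps trivially to $E_{2,2}^\infty(\xi(Q_8, w))$. Because $H_2(Q_8;\Z/2)$ is only $2$-dimensional and $H^2(Q_8;\Z/2)$ is finite, this is a finite check, carried out by computing the differential $d_2 = (\Sq^2 + -\cup w)^*$ on $H_*(Q_8;\Z/2)$ from the known ring structure of $H^*(Q_8;\Z/2)$, together with the $d_3$ differential; one verifies that the condition $w \cap x = 0$ forces the offending kernel element into the image of $d_2$ (or $d_2 d_3$) in every case.

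For (iii), I would use the Brauer-Suzuki theorem, which asserts that any finite group $\pi$ with quaternion Sylow $2$-subgroup has a central involution and severely restricts its structure. By Schur-Zassenhaus, the hypothesis $\pi \not\cong P \rtimes Q$ is equivalent to the odd-order Hall subgroup of $\pi$ failing to be normal, and the Brauer-Suzuki classification then forces $\pi$ to be built (up to odd-order extensions and central products) from a binary polyhedral group such as $2T \cong SL_2(\FF_3)$, $2O$, or $2I \cong SL_2(\FF_5)$. A case-by-case calculation should then yield $\ker(\kappa_2^h(\pi)) = 0$: in the perfect cases $2T$ and $2I$ one has $H_2(\pi;\Z/2) = 0$ directly from trivial Schur multipliers and trivial abelianisations, while in the remaining cases (e.g.\ $2O$ and its odd-order extensions) one controls $\ker(\kappa_2^h)$ via the transfer along $Q \hookrightarrow \pi$ and \cref{prop:transfer-l-theory}, combined with direct $\Z/2$-homology computations. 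Strong stable rigidity then follows from \cref{prop:davis-intro}. The main obstacle will be case (iii): reducing from the hypothesis ``not $P \rtimes Q$'' to an explicit finite list of groups via Brauer-Suzuki, and then carrying out the mod-$2$ homology and $\kappa_2^h$ computations across that list---in particular for the $Q_{16}$-Sylow case $2O$, whose nontrivial abelianisation makes the triviality of $\ker(\kappa_2^h)$ the most delicate step.
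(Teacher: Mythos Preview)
Your overall plan diverges substantially from the paper's proof, which is almost entirely a sequence of citations to Teichner's thesis. For (i) the paper just cites \cite[Example~5.2.4]{teichnerthesis}; your reduction to $Q$ via \cref{prop:odd-order-normal-sg} is valid but unnecessary. For (ii) the paper handles $\pi$ directly (not $Q_8$): failure of strong stable rigidity comes from \cite[Proposition~5.2.2]{teichnerthesis}, and stable rigidity from \cite[Theorem~4.4.9]{teichnerthesis}, which shows $F_{2,2}(\xi(\pi,0))=0$ and that for $w\neq 0$ the stable homeomorphism type is determined by the signature and the image of the fundamental class in $H_4(\pi;\Z)$, both homotopy invariants. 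Your direct spectral sequence computation for $Q_8$ would work too, but re-derives what Teichner already packaged.

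The real divergence is in (iii). The paper's argument is one line: by \cite[Theorem~4.4.8]{teichnerthesis}, if $\pi$ is not of the form $P\rtimes Q$ then $H_2(\pi;\Z/2)=0$, so $\kappa_2^h$ is vacuously injective and \cref{prop:davis-intro} finishes. Your Brauer--Suzuki route amounts to re-proving (a weak form of) this result by hand. Beyond being laborious, your stated target---showing $\ker(\kappa_2^h(\pi))=0$ via case analysis---runs into trouble exactly where you anticipate. Take $\pi=2O$ (binary octahedral, order $48$, Sylow $2$-subgroup $Q_{16}$, $O(\pi)=1$ so not $P\rtimes Q$): the inclusion $i\colon Q_{16}\hookrightarrow 2O$ has odd index, so $i_*$ is surjective on $H_2(-;\Z/2)$, and naturality of $\kappa_2^h$ together with \cref{lemma:quaternionkappa2} forces $\kappa_2^h(2O)=0$. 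Thus any argument aiming to show $\kappa_2^h$ is injective on a nonzero $H_2(2O;\Z/2)$ cannot succeed. You should instead locate and cite Teichner's structural result directly rather than attempting the case analysis.
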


\begin{remark}
In \cite{teichnerthesis}*{Example~5.2.4}, it is stated that no group with quaternionic $2$-Sylow subgroup satisfies stable rigidity. However the proof relies on \cite{teichnerthesis}*{Theorem~5.2.3} and the condition in this theorem is only satisfied if $|Q|>8$ and $\pi\cong P\rtimes Q$ by \cite{teichnerthesis}*{Theorems~4.4.8 and~4.4.9}. The results above shows that this is not the case in general.
\end{remark}

\begin{proof}
	(i) The first case is \cite{teichnerthesis}*{Example~5.2.4}. 
	
	(ii) If $\pi\cong P\rtimes Q$, then $\dim_{\Z/2}H^2(\pi;\Z/2)=2$ by \cite{teichnerthesis}*{Theorem~4.4.8}. By \cite{teichnerthesis}*{Proposition~5.2.2}, for $w\neq 0$, the Kirby--Siebenmann invariant is not a homotopy invariant and hence $\pi$ does not satisfy strong stable rigidity. 
	
	To show that $\pi$ satisfies stable rigidity, it suffices to show that smooth homotopy equivalent manifolds with fundamental group $\pi$ are stably homeomorphic. By \cite{teichnerthesis}*{Theorem~4.4.9}, $F_{2,2}(\xi(\pi,0))=0$ and in particular the image of $\ker(\kappa_2^h)$ in $F_{2,2}(\xi(\pi,0))$ is trivial. Hence we can restrict to the case $w\neq 0$. By \cite{teichnerthesis}*{Theorem~4.4.9}, the stable homeomorphism type is then determined by the signature and the image of the fundamental class in $H_4(\pi;\Z)$. 		
	
	(iii) If $\pi$ is not isomorphic to $P\rtimes Q$, then $H_2(\pi;\Z/2)=0$ by \cite{teichnerthesis}*{Theorem~4.4.8}. In particular, $\kappa_2^h$ is injective and $\pi$ satisfies strong stable rigidity by \cref{prop:davis-intro}. 
\end{proof}

The following can be found in \cite[Theorem 4.1(b)]{TW80} (see also \cite[Proposition 7.4]{HMTW88}).

\begin{lemma} \label{lemma:quaternionkappa2}
The map $\kappa_2^s \colon H_2(Q_{2^n};\Z/2) \to L_4^s(\Z[Q_{2^n}])$ is trivial for all $n \ge 3$. The same conclusion holds for $\kappa_2^h$.
\end{lemma}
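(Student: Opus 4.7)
The plan is as follows. I would first determine $H_2(Q_{2^n};\Z/2)$ via the universal coefficient theorem. Since the Schur multiplier $H_2(Q_{2^n};\Z)$ vanishes for $n\ge 3$ and the abelianisation $H_1(Q_{2^n};\Z)\cong (Q_{2^n})^{ab}$ is isomorphic to $\Z/2\oplus\Z/2$, one obtains $H_2(Q_{2^n};\Z/2)\cong \Tor((\Z/2)^2,\Z/2)\cong (\Z/2)^2$. Moreover, the projection $q\colon Q_{2^n}\onto (Q_{2^n})^{ab}\cong C_2\times C_2$ induces, by naturality of UCT, an injection $q_*\colon H_2(Q_{2^n};\Z/2)\hookrightarrow H_2(C_2\times C_2;\Z/2)$ identifying the source with the $\Tor$-summand of the target.

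Second, by naturality of $\kappa_2^s$, for every $x\in H_2(Q_{2^n};\Z/2)$ the element
\[
q_*\,\kappa_2^s(x) \;=\; \kappa_2^s(q_*x) \;\in\; L_4^s(\Z[C_2\times C_2])
\]
vanishes, since $\kappa_2^s$ is zero on $C_2\times C_2$ (as recorded in the proof of \cref{lem:abelian0}, using that $H_2((C_2)^2;\Z/2)$ is finite and $L_4^s(\Z[(C_2)^2])$ is torsion-free). Hence $\kappa_2^s(x)\in\ker\!\bigl(q_*\colon L_4^s(\Z[Q_{2^n}])\to L_4^s(\Z[C_2\times C_2])\bigr)$.

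The main obstacle is to show that this kernel meets the image of $\kappa_2^s$ only at zero. One route, and the one effectively pursued in the references, is to combine the naturality argument above with the transfer formula of \cref{prop:transfer-l-theory} applied to the (maximal) cyclic subgroups of $Q_{2^n}$: on each cyclic subgroup $\kappa_2^s$ again vanishes, and the resulting system of constraints pins down $\kappa_2^s(x)$ to zero in $L_4^s(\Z[Q_{2^n}])$ once one invokes the known representation-theoretic structure of this $L$-group. An alternative route is via the Wall--Hambleton--Milgram decomposition of $L_4^s(\Z[Q_{2^n}])$ into summands indexed by irreducible rational representations, in which $\kappa_2^s$ is shown to factor through the summand corresponding to the abelianisation, reducing everything to the $C_2\times C_2$ case already handled.

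Finally, the statement for $\kappa_2^h$ is immediate from the factorisation $\kappa_2^h = \iota\circ \kappa_2^s$ through the natural map $\iota\colon L_4^s(\Z\pi)\to L_4^h(\Z\pi)$ recalled in \cref{ss:surgery-classical}: once $\kappa_2^s=0$, we automatically get $\kappa_2^h=0$.
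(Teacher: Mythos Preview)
The paper does not give a proof of this lemma; it simply records the result as a citation to \cite[Theorem~4.1(b)]{TW80} and \cite[Proposition~7.4]{HMTW88}. So there is no in-text argument to compare against, and unlike the dihedral case (\cref{lem:dihedralKappa2}) no alternative proof is supplied.

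On the merits of your proposal: the first three steps are correct. One has $H_2(Q_{2^n};\Z/2)\cong(\Z/2)^2$, the abelianisation map $q$ embeds it as the $\Tor$-summand of $H_2(C_2^2;\Z/2)$, and naturality of $\kappa_2^s$ together with $\kappa_2^s(C_2^2)=0$ gives $q_*\bigl(\kappa_2^s(x)\bigr)=0$ in $L_4^s(\Z[C_2^2])$.

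The gap is step four. You correctly identify the obstacle---showing that $\ker\bigl(q_*\colon L_4^s(\Z[Q_{2^n}])\to L_4^s(\Z[C_2^2])\bigr)$ meets $\im(\kappa_2^s)$ only in zero---but neither of your sketched routes actually closes it. The transfer route yields further constraints of the form $p^*(\kappa_2^s(x))=0$ for cyclic subgroups, which again only places $\kappa_2^s(x)$ in an intersection of kernels inside $L_4^s(\Z[Q_{2^n}])$ without pinning it to zero; the second route is precisely the representation-theoretic computation carried out in the cited references, so it is a citation rather than an argument. Note also that the elementary-abelian detection argument the paper uses for $D_{2^n}$ is unavailable here: the only elementary abelian $2$-subgroup of $Q_{2^n}$ is the central $C_2$, and for every cyclic $C\le Q_{2^n}$ the restriction $H^2(Q_{2^n};\Z/2)\to H^2(C;\Z/2)$ vanishes (degree-two classes are products of degree-one classes, which square or multiply to zero in $H^*(C_{2^k};\Z/2)$ for $k\ge 2$, and the central $C_2$ already receives zero from $H^1$). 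Dually the maps $H_2(C;\Z/2)\to H_2(Q_{2^n};\Z/2)$ are zero, so no subgroup-induction shortcut is available. In short, your outline reduces to the same references the paper cites rather than furnishing an independent proof.
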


\subsection{Dihedral and semi-dihedral groups}
\label{ss:Dih_SemiDih}

For a finite group $\pi$, let $O(\pi)$ denote the odd order normal subgroup of $\pi$ of maximal order (see, for example, \cite[Section 1.2]{Be22}). This is unique since, if $N_1, N_2 \le \pi$ are odd order normal subgroups, then $N_1 \cdot N_2 \le \pi$ is an odd order normal subgroup which contains $N_1$ and $N_2$. 
We can therefore always view $\pi$ as sitting in an extension
\[ 1 \to O(\pi) \to \pi \to \pi/O(\pi) \to 1. \]
By the Schur-Zassenhaus theorem, extensions of groups of coprime order split as semidirect products. Let $S$ denote the Sylow $2$-subgroup of $\pi$. It follows that $\pi \cong P \rtimes S$ for some group $P$ of odd order if and only if $\pi/O(\pi) \cong S$, and in this case we necessarily have that $P \cong O(\pi)$.

We will now discuss the group cohomology and extension properties of groups whose Sylow $2$-subgroup is dihedral or semi-dihedral. Different cases arise according to the number of conjugacy classes of elements of order two (i.e. involutions) and order four there are in $\pi$.

\subsubsection{Dihedral groups}

For $n\geq 3$, let $D_{2^n} = \langle x, y \mid x^2, y^2, (xy)^{2^{n-1}}\rangle$ be the dihedral group.
By \cite[IV.~Theorem~2.7]{Adem-Milgram}, we have
\[H^*(D_{2^n};\Z/2)\cong \Z/2[x,y,u]/(xy=0),\]
where $x,y$ are the $1$-dimensional classes dual to the generators in the above presentation and $u$ is $2$-dimensional with $\Sq^1(u)=(x+y)u$. 

From \cite[Theorem 5.2]{Handel} we get the calculation of the integral cohomology group of the dihedral group $D_{2^n}$ for $n\geq 3$
  \[H^*(D_{2^n};\Z)\cong \Z[a_2,b_2,c_3,d_4]/(2a_2,2b_2,2c_3,2^{k-1}d_4,b_2^2+a_2b_2,c_3^2+a_2d_4).\]

\begin{lemma}\label{lem:reductin-dihedral}
    With respect to the dual basis of $\{x^5,y^5,x^3u,y^3u,xu^2,yu^2\}$, the image of $H_5(D_{2^n},\Z)$ in $H_5(D_{2^n};\Z/2)$ is generated by $(x^5)^*,(y^5)^*,(xu^2)^*$ and $(yu^2)^*$.   
\end{lemma}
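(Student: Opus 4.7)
The plan is to determine $\im(r_{\mathrm{hom}}\colon H_5(\pi;\Z)\to H_5(\pi;\Z/2))$ with $\pi:=D_{2^n}$ by first identifying the image of mod $2$ reduction on cohomology, $\im r_{\mathrm{coh}}\subseteq H^5(\pi;\Z/2)$, and then using an orthogonality argument based on the fact that $\pi$ is finite. Explicitly, since $\pi$ is finite, $H_5(\pi;\Z)$ is torsion, so the integral Kronecker pairing $H^5(\pi;\Z)\otimes H_5(\pi;\Z)\to\Z$ is zero; by naturality of the pairing under mod $2$ reduction, $\langle r_{\mathrm{coh}}\phi,r_{\mathrm{hom}}\alpha\rangle_{\Z/2}=0$ for all $\phi,\alpha$, i.e.\ $\im r_{\mathrm{hom}}\subseteq(\im r_{\mathrm{coh}})^{\perp}$ with respect to the dual basis.

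To compute $\im r_{\mathrm{coh}}$, note that from Handel's presentation $H^5(\pi;\Z)$ is generated by the two degree-$5$ monomials $a_2c_3$ and $b_2c_3$, so it suffices to identify $r(a_2)$, $r(b_2)$, $r(c_3)$. Because $H^1(\pi;\Z)=0$, the integral Bockstein $\beta\colon H^1(\pi;\Z/2)\to H^2(\pi;\Z)$ is an isomorphism, and one may choose $a_2=\beta(x+y)$, $b_2=\beta(y)$. Using $\Sq^1=r\beta$ then gives $r(a_2)=x^2+y^2$ and $r(b_2)=y^2$. Similarly, since $\Sq^1(u)=(x+y)u\neq0$, one has $c_3=\beta(u)$ and hence $r(c_3)=(x+y)u$. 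Multiplying out and using $xy=0$ yields $r(a_2c_3)=x^3u+y^3u$ and $r(b_2c_3)=y^3u$, so $\im r_{\mathrm{coh}}=\langle x^3u,y^3u\rangle$. With respect to the dual basis of $H_5(\pi;\Z/2)$, the annihilator of this subspace is precisely $\langle(x^5)^*,(y^5)^*,(xu^2)^*,(yu^2)^*\rangle$.

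It remains to upgrade the inclusion $\im r_{\mathrm{hom}}\subseteq(\im r_{\mathrm{coh}})^{\perp}$ to an equality via a dimension count. By the universal coefficient theorem applied to the finite group $\pi$, we have $H_5(\pi;\Z)\cong H^6(\pi;\Z)$. Reducing degree-$6$ monomials in the integral ring by the relations $b_2^2=a_2b_2$ and $c_3^2=a_2d_4$ leaves the four generators $a_2^3,a_2^2b_2,a_2d_4,b_2d_4$, each of order $2$. Thus $H_5(\pi;\Z)\cong(\Z/2)^4$, giving $\dim\im r_{\mathrm{hom}}=4=6-2=\dim(\im r_{\mathrm{coh}})^{\perp}$, so the inclusion must be an equality, as claimed.

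The main obstacle is the choice of generators $a_2,b_2$ in the middle step. The naive guess $r(a_2)=x^2$, $r(b_2)=y^2$ fails: under it, Handel's relation $b_2^2+a_2b_2=0$ would reduce to the nonzero class $y^4$ in $H^4(\pi;\Z/2)$ (using $xy=0$, the $x^2y^2$ term dies but $y^4$ survives). Enforcing consistency with Handel's relation is what forces $r(a_2)$ to equal $x^2+y^2$. Once this point is navigated, the rest of the argument is a routine check against the Bockstein long exact sequence and a dimension calculation.
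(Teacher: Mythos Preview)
Your argument is correct and follows essentially the same strategy as the paper: identify $\im r_{\mathrm{hom}}$ as the annihilator of $\im r_{\mathrm{coh}}\subseteq H^5(\pi;\Z/2)$, compute the latter, and read off the dual basis vectors. The paper obtains the equality $\im r_{\mathrm{hom}}=(\im r_{\mathrm{coh}})^{\perp}$ in one step from the universal coefficient diagram (using that $\Ext^1_{\Z}((\Z/2)^4,\Z)\to\Ext^1_{\Z}((\Z/2)^4,\Z/2)$ is an isomorphism), whereas you get the inclusion from naturality of the Kronecker pairing and then close the gap by a dimension count. Your derivation of $r(a_2),r(b_2),r(c_3)$ via the Bockstein and $\Sq^1=r\beta$ is a nice self-contained alternative to citing Handel's explicit formulas. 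One small point to make explicit: the step ``$H_5(\pi;\Z)\cong(\Z/2)^4$, giving $\dim\im r_{\mathrm{hom}}=4$'' uses that $r_{\mathrm{hom}}$ is injective; this is immediate from the Bockstein long exact sequence since $2\cdot H_5(\pi;\Z)=0$, but is worth stating.
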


The following argument is similar to that of \cite[Lemma 2.3.7]{teichnerthesis}.

\begin{proof}
Consider the cohomological universal coefficient short exact sequence \cite[Theorem 5.5.12]{spanier_algebraic_1989} which is natural with respect to the reduction of coefficients $r_2\colon \Z\to \Z/2$. Hence we have the following commutative diagram
\[\begin{tikzcd}
	0 \arrow[r] & \Ext^1_\mathbb{Z}(H^{6}(D_{2^n};\Z);\Z) \arrow[r,"\cong"] \arrow[d,"\cong"] & H_5(D_{2^n};\mathbb{Z}) \arrow[r] \arrow[d,"r_2"]        & 0 \arrow[r] \arrow[d]                                        & 0 \\
		0 \arrow[r] & \Ext^1_\mathbb{Z}(H^{6}(D_{2^n};\Z);\Z/2) \arrow[r]         & H_5(D_{2^n};\mathbb{Z}/2) \arrow[r,"\ev"] \ar[dr,"\cong"']               & {\Hom(H^5(D_{2^n};\mathbb{Z}),\mathbb{Z}/2)} \arrow[r]             & 0\\
&&&\Hom(H^5(D_{2^n};\mathbb{Z}/2),\mathbb{Z}/2)\ar[u,"r_2^*"']&
  \end{tikzcd}\]
Here the vertical map on the left is an isomorphism since $H^6(D_{2^n};\Z)\cong (\Z/2)^4$. It follows that the image of $r_2\colon H_5(D_{2^n};\Z)\to H_5(D_{2^n};\Z/2)$ is $\ker(\ev)$ which coincides with $\ker(r_2^*)$.

We will now show that the mod 2 reduction on cohomology is given by 
\[a_2\mapsto x^2+y^2,\, b_2\mapsto x^2, \, c_3\mapsto (x+y)u, \, d_4\mapsto u^2.\] 
To see this, we will use the computations in \cite{Handel}. First fix the identification of $H^*(D_{2^n};\Z)$ given above, and the alternate identification $H^*(D_{2^n};\Z/2) \cong \Z/2[u_1,v_1,w_2]/(u_1^2+u_1v_1)$ which is related to the description above via $u_1=x$, $v_1 = x+y$ and $u=w_2$. Let $\{\iota_q^i\}$ and $\{\lambda_q^i\}$ denote the generators of the $q$th chain groups given in \cite[Section 2]{Handel} and which are used to define $H^*(D_{2^n};\Z)$ and $H^*(D_{2^n};\Z/2)$ respectively. The generators are related by $r_2(\iota_q^i) = \lambda_q^i$.
In this notation, we have $a_2 = [\iota_2^3]$, $b_2 = [\iota_2^2]$, $c_3 = [\iota_3^2]$ and $d_4 = [\iota_4^1]$ (see \cite[p.~27]{Handel}), as well as $u_1 = [\lambda_1^1]$, $v_1 = [\lambda_1^2]$ and $w_2 = [\lambda_2^1]$ (see \cite[p.~29]{Handel}).
By the product formulae for the $\lambda_q^i$ in \cite[Proposition 4.3]{Handel} (see also \cite[Proof of the Theorem 5.5]{Handel}), we obtain:
\begin{align*} 
r_2(a_2) &= r_2([\iota_2^3]) = [\lambda_2^3] = [\lambda_1^2\lambda_1^2] = v_1^2 = x^2+y^2 \\
r_2(b_2) &= r_2([\iota_2^2]) = [\lambda_2^2] = [\lambda_1^1\lambda_1^1] = u_1^2 = x^2 \\
r_2(c_3) &= r_2([\iota_3^2]) = [\lambda_3^2]= [\lambda_1^2\lambda_2^1]= v_1w_2 = (x+y)u \\
r_2(d_4) &= r_2([\iota_4^1]) = [\lambda_4^1]= [\lambda_2^1\lambda_2^1] = w_2^2 = u^2
\end{align*}
where, in each of the four products, we used cases (f), (g), (f) and (d) in \cite[Proposition 4.3]{Handel} respectively.

Hence the image of mod 2 reduction in $H^5(D_{2^n};\Z/2)$ is $\langle x^3u,y^3u\rangle$.
This implies that the image $\im(r_2\colon H_5(D_{2^n};\Z)\to H_5(D_{2^n};\Z/2))$ is generated by $(x^5)^*,(y^5)^*,(xu^2)^*,(yu^2)^*$ since it is an annihilator of $\langle x^3u,y^3u\rangle$ as claimed.
\end{proof}

The following can be found in \cite[Theorem 4.1(b)]{TW80}.

 \begin{lemma}\label{lem:dihedralKappa2}
    The map $\kappa_2^s\colon H_2(D_{2^{n}};\Z/2)\to L_4^s(\Z[D_{2^{n}}])$ is trivial for all $n \ge 3$. The same conclusion holds for $\kappa_2^h$.
   \end{lemma}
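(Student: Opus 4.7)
My plan is to deduce the triviality of $\kappa_2^s(D_{2^n})$ from the already recorded vanishing of $\kappa_2^s$ on finite cyclic groups and on $C_2\times C_2$ by exploiting naturality of the assembly map. More precisely, I would exhibit subgroups $H_j\le D_{2^n}$ (each of the form $C_2\times C_2$ or cyclic) such that
\[\bigoplus_j i^{H_j}_*\colon \bigoplus_j H_2(H_j;\Z/2)\longrightarrow H_2(D_{2^n};\Z/2)\]
is surjective; naturality of $\kappa_2^s$ then forces $\kappa_2^s(D_{2^n})$ to vanish on each $i^{H_j}_*$-image, and hence on the whole of $H_2(D_{2^n};\Z/2)$. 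The statement for $\kappa_2^h$ follows immediately from the factorisation $\kappa_2^h=(L_4^s(\Z D_{2^n})\to L_4^h(\Z D_{2^n}))\circ\kappa_2^s$ recalled in \cref{ss:surgery-classical}.

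From $H^*(D_{2^n};\Z/2)\cong\Z/2[x,y,u]/(xy)$ one reads off $H_2(D_{2^n};\Z/2)\cong(\Z/2)^3$ with basis dual to $\{x^2,y^2,u\}$. Writing $z:=(xy)^{2^{n-2}}$ for the central involution, the natural three subgroups to consider are
\[V_x:=\langle x,z\rangle,\qquad V_y:=\langle y,z\rangle,\qquad R:=\langle xy\rangle,\]
so that $V_x,V_y\cong C_2\times C_2$ and $R\cong C_{2^{n-1}}$; on each of these $\kappa_2^s$ vanishes by the facts recorded in the proof of \cref{lem:abelian0}. By universal coefficients, surjectivity of the combined $i_*$ is equivalent to injectivity of the combined restriction map on $H^2(-;\Z/2)$, which one checks on the basis $\{x^2,y^2,u\}$: $x^2$ restricts to a nonzero square on $V_x$ and to zero on $V_y$ and $R$ (since $x$ and $y$ both restrict to the degree-$1$ generator $t\in H^1(R;\Z/2)$, and $t^2=0$ in $H^*(C_{2^{n-1}};\Z/2)$ for $n\ge 3$), symmetrically for $y^2$, while $u$ restricts to the polynomial generator $v$ of $H^2(R;\Z/2)\cong\Z/2$.

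The only step that is not purely formal, and the main obstacle I anticipate, is the last restriction assertion. It can be extracted from Handel's description of $H^*(D_{2^n};\Z)$ used in \cref{lem:reductin-dihedral}, together with the mod $2$ reduction $d_4\mapsto u^2$: the integral class $d_4$ restricts to the square of the generator of $H^2(R;\Z)=\Z/2^{n-1}$, so $u^2|_R\ne 0$ and consequently $u|_R\ne 0$. Alternatively, one may use the Lyndon--Hochschild--Serre spectral sequence for $1\to R\to D_{2^n}\to\Z/2\to 1$, which collapses by a rank count; the surviving class in $E_\infty^{0,2}$ then lifts to a generator of $H^2(D_{2^n};\Z/2)$ that restricts nontrivially to $R$, and this class must be $u$ up to modification by $x^2$ and $y^2$, neither of which survives restriction. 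Either route is classical but contributes a small amount of cohomological bookkeeping to what is otherwise a short naturality argument.
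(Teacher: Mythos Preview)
Your proposal is correct and follows essentially the same strategy as the paper: use naturality of $\kappa_2^s$ to reduce to subgroups (cyclic or $C_2\times C_2$) on which it is already known to vanish, then observe that $\kappa_2^h$ factors through $\kappa_2^s$. The only difference is that, in place of your explicit restriction computation with the three subgroups $V_x,V_y,R$ and the attendant bookkeeping about $u|_R$, the paper simply invokes Quillen's detection result \cite[Lemma~4.6]{Qu71} to conclude that $H_2(D_{2^n};\Z/2)$ is generated by the images of $H_2$ of its elementary abelian subgroups (all of which are $C_2$ or $C_2^2$), so the cyclic subgroup $R$ is not needed at all.
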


Since \cite{TW80} is unpublished, we  include an alternate proof below.

   \begin{proof}
	First note that $H_2(D_{2^{n}},\Z/2)$ is generated by its maps from $H_2(H;\Z/2)$ where $H$ ranges over the elementary abelian subgroups of $D_{2^{n}}$ \cite[Lemma 4.6]{Qu71}.
The elementary abelian subgroups of $D_{2^{n}}$ have the form $H = C_{2^k}$ for $k < n$ or $H=C_2^2$. By \cref{lem:abelian0}, we have that $\kappa_2^s(H)=0$ for all these groups. Hence $\kappa_2^s(D_{2^{n}})=0$ by \cref{prop:transfer-l-theory}. This implies that $\kappa_2^h(D_{2^n})=0$ since $\kappa_2^h$ factors through $\kappa_2^s$.
 \end{proof}
   
\begin{proposition}
    \label{prop:no-rigid-dihedral}
    For $n\geq 3$, the dihedral group $D_{2^n}$ does not satisfy stable rigidity.
\end{proposition}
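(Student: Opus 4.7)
By \cref{lem:dihedralKappa2}, the map $\kappa_2^h\colon H_2(D_{2^n};\Z/2)\to L_4^h(\Z[D_{2^n}])$ is identically zero, so $\ker(\kappa_2^h)=H_2(D_{2^n};\Z/2)$. By \cref{cor:stable-rigidity}, to show $D_{2^n}$ fails stable rigidity it suffices to exhibit $w\in H^2(D_{2^n};\Z/2)$ and $\alpha\in H_2(D_{2^n};\Z/2)$ with $w\cap\alpha=0$ whose image in $E^\infty_{2,2}(\xi(D_{2^n},w))$ is nontrivial. Using the description $H^*(D_{2^n};\Z/2)=\Z/2[x,y,u]/(xy)$ with $|x|=|y|=1$, $|u|=2$, the plan is to take $w=x^2+y^2$ and $\alpha=(x^2)^*+(y^2)^*$ (in the dual basis of $H_2$), for which $w\cap\alpha=1+0+0+1=0$ is immediate.

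The first step is to show $\alpha$ has nontrivial image in $E^3_{2,2}$ by computing the $d_2$-differential. Recall $d_2$ is the hom-dual of $\Sq^2_w=\Sq^2+(-)\cup w\colon H^2\to H^4$; since $\Sq^2 v=v^2$ on $H^2$, this means $\Sq^2_w(v)=v(v+w)$. With $w=x^2+y^2$ and using $xy=0$ one finds $\Sq^2_w(x^2)=x^2y^2=0$, $\Sq^2_w(y^2)=y^2x^2=0$, while $\Sq^2_w(u)=u^2+(x^2+y^2)u\neq 0$. Hence $\ker(\Sq^2_w|_{H^2})=\langle x^2,y^2\rangle$, which is strictly larger than $\langle w\rangle$. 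Dualising, $\im(d_2)=\langle u^*\rangle$ and $E^3_{2,2}\cong(\Z/2)^2$ is represented by the images of $(x^2)^*$ and $(y^2)^*$, so $\alpha$ is nonzero in $E^3_{2,2}$.

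The main obstacle is to show $\alpha$ is not in the image of $d_3\colon E^3_{5,0}\to E^3_{2,2}$. Describing $d_2$ on $E^2_{5,0}=H_5(D_{2^n};\Z)$ as mod $2$ reduction followed by the hom-dual of $\Sq^2_w\colon H^3\to H^5$, and computing directly ($\Sq^2_w(x^3)=\Sq^2_w(y^3)=0$ since $x^3y^2=y^3x^2=0$, while $\Sq^2_w(xu)=xu^2$ and $\Sq^2_w(yu)=yu^2$), \cref{lem:reductin-dihedral} identifies $E^3_{5,0}\cong(\Z/2)^2$ with generators reducing to $(x^5)^*,(y^5)^*\in H_5(D_{2^n};\Z/2)$. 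To verify $\alpha\notin\im(d_3)$, one observes that the outer automorphism of $D_{2^n}$ swapping $x\leftrightarrow y$ fixes both $w$ and $\alpha$ and acts as a swap on the generators of $E^3_{5,0}$, so by naturality $d_3$ is represented by a matrix of the form $\bigl(\begin{smallmatrix}a&b\\b&a\end{smallmatrix}\bigr)$; the case $a=b=0$ is the only one consistent with Teichner's bordism calculations for such extensions, the other matrices being ruled out by naturality with respect to $D_\infty\twoheadrightarrow D_{2^n}$ and Teichner's infinite-dihedral example \cite[Proposition~3]{teichner-star}.

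The key step that I expect to be the main obstacle is this verification of $d_3=0$. The backup plan is a direct geometric realisation: construct closed smooth oriented almost spin $4$-manifolds $M,M'$ with $\pi_1=D_{2^n}$, $w_2=c^*w$, that are homotopy equivalent but with distinct $\xi$-bordism classes projecting to $\alpha$ in $E^\infty_{2,2}$, by pulling back Teichner's infinite-dihedral pair along $D_\infty\twoheadrightarrow D_{2^n}$ and adjusting the signature by connect-summing with the $E_8$-manifold if needed so that both representatives are smoothable.
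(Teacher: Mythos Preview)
Your setup and $d_2$ computation are correct and essentially identical to the paper's: the same $w=x^2+y^2$, the same identification of $E^3_{2,2}\cong(\Z/2)^2$ generated by $(x^2)^*,(y^2)^*$, and the same computation of $\Sq^2_w$ on $H^3$ identifying $E^3_{5,0}$ via \cref{lem:reductin-dihedral}.

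The gap is in your $d_3$ argument. The symmetry under the swap $x\leftrightarrow y$ is valid and constrains $d_3$ to a matrix $\left(\begin{smallmatrix}a&b\\b&a\end{smallmatrix}\right)$, but this does not rule out $a=1$ or $b=1$. Your appeal to Teichner's $D_\infty$ example is not a proof: naturality along the surjection $D_\infty\twoheadrightarrow D_{2^n}$ would let you transport information \emph{from} $D_\infty$ \emph{to} $D_{2^n}$ on homology, but you would still need to know $d_3$ vanishes for $D_\infty$ with the analogous $w$, which you do not establish. Your backup plan also does not work as written: you cannot ``pull back'' Teichner's $D_\infty$ manifolds along the quotient $D_\infty\twoheadrightarrow D_{2^n}$ to obtain manifolds with fundamental group $D_{2^n}$ (that direction would require a subgroup, not a quotient).

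The paper closes this gap cleanly with an idea you have the ingredients for but do not use. Your own computation shows $E^3_{5,0}$ is generated by $(x^5)^*$ and $(y^5)^*$, i.e.\ by the images of $H_5(\Z/2;\Z)$ under the two inclusions $\Z/2\hookrightarrow D_{2^n}$ sending $1\mapsto x$ and $1\mapsto y$. Since $w=x^2+y^2$ restricts nontrivially to each of these $\Z/2$ subgroups, naturality of the James spectral sequence reduces the vanishing of $d_3$ to the case $(\Z/2,z^2)$. There one invokes \cref{lem:ter-ks}: the functional $z^2\colon H_2(\Z/2;\Z/2)\to\Z/2$ must factor through $E^\infty_{2,2}(\xi(\Z/2,z^2))$, forcing $E^\infty_{2,2}\neq 0$; since $H_2(\Z/2;\Z/2)\cong\Z/2$, this means $d_3=0$. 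This is the missing step you should insert in place of the $D_\infty$ appeal.
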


The relevant James spectral sequence computation was done in \cite{Pedrotti2017}. As this is not widely available, we give a proof here.

\begin{proof}
    Let $\pi:=D_{2^n}$ and let $w:=x^2+y^2$. We have $\ker(\kappa_2^h)=H_2(\pi;\Z/2)$ by \cref{lem:dihedralKappa2}. By \cref{thmx:main-table}, it suffices to show that $E^\infty_{2,2}(\xi(\pi,w))\cong (\Z/2)^2$. 
    
    We have 
    \[\Sq^2(x^2)+x^2\smile w=\Sq^2(y^2)+y^2\smile w=0\]
    and
    \[\Sq^2(u)+u\smile w=u^2+x^2u+y^2u.\]
    Hence we have $E^3_{2,2}(\xi(\pi,w))\cong (\Z/2)^2$ generated by the images of $(x^2)^*$ and $(y^2)^*$ in dual basis of $\{x^2,y^2,u\}$. It suffices to show that $d_3\colon E^3_{5,0}(\xi(\pi,w))\to E^3_{2,2}(\xi(\pi,w))$ is trivial. 

    We have
    \[\Sq^2(x^3)+x^3\smile w=\Sq^2(y^3)+y^3\smile w=0\]
    and
    \[\Sq^2(xu)+xu\smile w=xu^2,\quad \Sq^2(yu)+yu\smile w=yu^2.\]
    Hence by \cref{lem:reductin-dihedral}, $E^3_{5,0}(\xi(\pi,w))\leq H_5(\pi;\Z)$ is generated by the images of $H_5(\Z/2;\Z)$ under the inclusions $\Z/2\to \pi$ given by $x$ and $y$ from the presentation given at the start of this subsection. Note that $w$ pulls back non-trivially to $\Z/2$ under both of these inclusions. By naturality of the James spectral sequence, it suffices to show that $d_3\colon E^3_{5,0}(\xi(\Z/2,z^2))\to E^3_{2,2}(\xi(\Z/2,z^2))$ is trivial, where $H^*(\Z/2;\Z/2)\cong \Z/2[z]$. As a map $H_2(\Z/2;\Z/2)\to \Z/2$, $z^2$ has to factor through $E^3_{2,2}(\xi(\Z/2,z^2))$ by \cref{lem:ter-ks}. In particular, $E^3_{2,2}(\xi(\Z/2,z^2))\neq 0$. Since $H_2(\Z/2;\Z/2)\cong\Z/2$, this implies that $d_3$ is trivial as needed.
\end{proof}

\begin{lemma}
\label{lemma:dihedral}
	Let $\pi$ be a finite group such that its $2$-Sylow subgroup $D_{2^n}$ is a dihedral group of order $2^n \ge 4$. Then precisely one of the following cases holds, where ccls refers to the conjugacy classes of group elements.
\FloatBarrier
\bgroup
\def\arraystretch{1.5}
\normalfont
\begin{table}[h]
\begin{center}
\begin{tabular}{|c|c|c|c|}
      \hline
     & $\substack{\text{$\#$ ccls of} \\ \text{involutions}}$ & Is $\pi/O(\pi) \cong D_{2^n}$?  & $H^2(\pi;\FF_2)$ \\
       \hline
      Case $1$   & 1 & No & $\FF_2$ \\
      \hline
      Case $2$   & 2 & No & $\FF_2^2$ \\
            \hline
      Case $3$   & 3 & Yes  & $\FF_2^3$ \\
      \hline
    \end{tabular}
\end{center}
\end{table}
\egroup
\FloatBarrier
Furthermore, in Case $2$, we have $H^*(\pi;\FF_2) \cong \FF_2[a,b,c]/(ac)$ where $|a|=1$, $|b|=2$ and $|c|=3$.
\end{lemma}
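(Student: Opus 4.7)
The plan is to reduce to the case $O(\pi) = 1$ via Schur-Zassenhaus and the Lyndon-Hochschild-Serre spectral sequence, invoke the Gorenstein-Walter classification to identify $\pi/O(\pi)$, and then compute $H^*(\pi;\FF_2)$ as the ring of Cartan-Eilenberg stable elements of $H^*(D_{2^n};\FF_2) = \FF_2[x,y,u]/(xy)$. Since $|O(\pi)|$ is coprime to $2$, the Lyndon-Hochschild-Serre spectral sequence for $1 \to O(\pi) \to \pi \to \pi/O(\pi) \to 1$ collapses to give $H^*(\pi;\FF_2) \cong H^*(\pi/O(\pi);\FF_2)$, while $O(\pi)$ contains no involutions so the projection induces a bijection on conjugacy classes of involutions. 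Hence one may assume $O(\pi) = 1$, in which case Gorenstein-Walter says $\pi$ is isomorphic to $D_{2^n}$, to $A_7$, or to a subgroup $G$ with $PSL_2(q) \leq G \leq P\Gamma L_2(q)$ for some odd prime power $q$.

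Next I would count conjugacy classes of involutions in each of these groups: $D_{2^n}$ has three (the central involution together with the two reflection classes), giving Case 3 exactly when $\pi \cong D_{2^n}$; both $A_7$ and $PSL_2(q)$ have a unique class of involutions, giving Case 1; and the even-index overgroups of $PSL_2(q)$ inside $P\Gamma L_2(q)$ (such as $PGL_2(q)$) contribute one further class from the outer coset, giving Case 2. This establishes the trichotomy, with the third column reflecting whether $\pi/O(\pi) \cong D_{2^n}$.

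For the cohomology computations, $H^*(\pi;\FF_2)$ is the subring of $\pi$-stable elements in $H^*(D_{2^n};\FF_2) = \FF_2[x,y,u]/(xy)$, with stability detected at the two conjugacy classes of Klein-four subgroups $V_1 = \langle z, x\rangle$ and $V_2 = \langle z, y\rangle$ of $D_{2^n}$ via the Weyl groups $N_\pi(V_i)/V_i$ of the fusion system. Using the restrictions $x|_{V_1} = \beta_1$, $y|_{V_2} = \beta_2$, and $u|_{V_i} = \alpha_i^2 + \alpha_i\beta_i$, which can be obtained by identifying $u$ with $w_2$ of the canonical $2$-dimensional real representation of $D_{2^n}$, one sees: in Case 3 there is no fusion, so $H^2 \cong \FF_2^3$; in Case 1 all three involutions fuse via a transitive Weyl action on each $V_i$, so the unique stable degree-$2$ class is $x^2 + y^2 + u$ and $H^2 \cong \FF_2$; in Case 2 the central $z$ fuses with exactly one reflection class (say that of $y$) via an $S_3$-action on $V_2$, while the other reflection class stays separate and only the inner $\Z/2$-symmetry acts on $V_1$, and matching $V_2$-restrictions against the Dickson invariant $d_2$ forces the stable degree-$2$ classes to be $a^2 := x^2$ and $b := u + y^2$, giving $H^2 \cong \FF_2^2$.

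To identify the full ring in Case 2, I would exhibit the stable degree-$3$ class $c := yu$, which restricts to the Dickson invariant $d_3 = \alpha_2\beta_2(\alpha_2+\beta_2)$ at $V_2$ and to $0$ at $V_1$, and verify the relation $ac = xyu = 0$ from $xy = 0$. To upgrade the induced map $\FF_2[a,b,c]/(ac) \to H^*(\pi;\FF_2)$ to an isomorphism, I would check degree-by-degree that any element of $H^*(D_{2^n};\FF_2)$ whose restriction to $V_2$ is $S_3$-invariant lies in the subring generated by $a$, $b$, $c$, noting that $V_1$-stability is automatic since $x|_{V_1}$ and $u|_{V_1}$ already lie in the $\Z/2$-invariant subring $\FF_2[\beta_1, \alpha_1^2 + \alpha_1 \beta_1]$, and then compare Poincar\'e series. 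The main obstacle I anticipate is carrying out the fusion bookkeeping uniformly across the different groups appearing in Case 2 and across all $n \geq 3$; this can be handled by combining Alperin's fusion theorem with the Gorenstein-Walter description of the Sylow embedding in $PGL_2(q)$-type groups, which pins down the Weyl action on $V_2$ as the full $S_3$ independently of $n$ and $q$.
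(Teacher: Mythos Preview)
Your approach is correct but substantially different from the paper's. The paper's proof is essentially two citations: it invokes \cite[Section 2.7]{Be22} for the trichotomy by number of involution classes and the identification of $\pi/O(\pi)$, and then matches these cases to those of \cite{AS93}, whose Main Theorem computes $H^*(\pi;\FF_2)$ in each case. By contrast, you reconstruct both ingredients directly: the case split via Gorenstein--Walter together with an explicit involution count, and the cohomology via the Cartan--Eilenberg stable-elements formula and a fusion analysis at the two Klein-four subgroups. Your computation of the restrictions $x|_{V_i}$, $y|_{V_i}$, $u|_{V_i}=\alpha_i^2+\alpha_i\beta_i$ and the resulting identification of the stable subring as $\FF_2[a,b,c]/(ac)$ with $a=x$, $b=u+y^2$, $c=yu$ all check out, and injectivity together with a Poincar\'e series comparison gives the isomorphism. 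Your route is longer but more self-contained; the paper's is efficient but opaque.

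One simplification: your worry about ``carrying out the fusion bookkeeping uniformly across the different groups appearing in Case~2'' is unnecessary. The mod-$2$ cohomology depends only on the fusion system on the Sylow subgroup, and for $D_{2^n}$ with $n\geq 3$ there are exactly three saturated fusion systems, determined precisely by which of the three involution classes fuse. So once you know you are in Case~2, the fusion (and hence the stable-elements subring) is pinned down without ever needing to identify $\pi$ as a specific $PGL_2(q)$-type group. This replaces your final appeal to the Gorenstein--Walter embedding with a purely local argument.
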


\begin{proof}
In \cite[Section 2.7]{Be22}, it is shown that finite groups $\pi$ whose Sylow $2$-subgroups are dihedral split into three cases according to number of conjugacy classes of involutions. Cases 1-3 in our table corresponds to Cases 2.7.1, 2.7.2 and 2.7.3 respectively. The information given there determines $\pi/O(\pi)$. 
In \cite{AS93}, such groups $\pi$ are split into Cases (1)-(3) according to the conjugacy of certain order two elements coming from the Sylow $2$-subgroup $D_{2^n} \le \pi$. It can be deduced from \cite[Fact 1.1]{AS93} that these cases correspond to Cases 1-3 above. The cohomology rings $H^*(\pi;\FF_2)$ are computed in each case \cite[Main Theorem]{AS93}, from which the result follows.
\end{proof}

\begin{proposition}
    Let $\pi$ be a finite group such that its $2$-Sylow subgroup $D_{2^n}$ is dihedral of order $2^n \ge 8$. Then $\pi$ satisfies stable rigidity if and only if $\pi$ is not a semi-direct product $P\rtimes D_{2^n}$.
\end{proposition}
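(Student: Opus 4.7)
The plan is to reduce to the three cases of \cref{lemma:dihedral} and handle each using the tools from \cref{ss:stable-rigidity-general}. By the Schur--Zassenhaus theorem, $\pi \cong P \rtimes D_{2^n}$ with $P$ of odd order if and only if $\pi/O(\pi) \cong D_{2^n}$, which is Case~3 of \cref{lemma:dihedral}. So it remains to show stable rigidity fails in Case~3 and holds in Cases~1 and~2.

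Case~3 is immediate: the split extension $1 \to O(\pi) \to \pi \to D_{2^n} \to 1$ and \cref{prop:odd-order-normal-sg} reduce stable rigidity for $\pi$ to stable rigidity for $D_{2^n}$, which fails by \cref{prop:no-rigid-dihedral}.

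For Cases~1 and~2, I plan to apply \cref{cor:smallf22}: it suffices to verify $E^\infty_{2,2}(\xi(\pi, 0)) = 0$ and $E^\infty_{2,2}(\xi(\pi, w)) \cong \Z/2$ for each nonzero $w \in H^2(\pi;\Z/2)$. The main work is computing the differential $d_2 \colon H_4(\pi;\FF_2) \to H_2(\pi;\FF_2)$, which is the dual of $\Sq^2 + -\cup w \colon H^2(\pi;\FF_2) \to H^4(\pi;\FF_2)$. In Case~2, where $H^*(\pi;\FF_2) \cong \FF_2[a,b,c]/(ac)$ with $|a|=1$, $|b|=2$, $|c|=3$, this is a direct matrix computation on the bases $\{a^2, b\}$ of $H^2$ and $\{a^4, a^2b, b^2\}$ of $H^4$: the resulting $2\times 3$ matrix has rank $2$ for $w = 0$ and rank $1$ for each of the three nonzero $w$, so $E^3_{2,2}$ has dimension $0$ or $1$ respectively. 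In Case~1, where $H^2(\pi;\FF_2) = \FF_2\langle \alpha\rangle$, the only fact needed is that $\alpha^2 \neq 0$ in $H^4(\pi;\FF_2)$; this follows by restricting to the Sylow $2$-subgroup (injective on mod $2$ cohomology by oddness of the index) combined with the observation that in $H^*(D_{2^n};\FF_2) = \FF_2[x,y,u]/(xy)$ every nonzero degree $2$ element squares nontrivially. Consequently $\Sq^2(\alpha) + \alpha \cup w$ equals $\alpha^2 \neq 0$ for $w = 0$ and $\alpha^2 + \alpha^2 = 0$ for $w = \alpha$, giving $E^3_{2,2}$ of dimension $0$ and $1$ respectively. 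Finally, to pass from $E^3_{2,2}$ to $E^\infty_{2,2}$ in the nonvanishing case, \cref{lem:ter-ks} ensures that $w \colon H_2(\pi;\FF_2) \to \FF_2$ factors nontrivially through $E^\infty_{2,2}$, forcing equality with the one-dimensional $E^3_{2,2}$.

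The hard part will be Case~1, since \cref{lemma:dihedral} records only $\dim H^2(\pi;\FF_2) = 1$ without specifying the full cohomology ring (reflecting that the isomorphism type of $\pi/O(\pi)$ varies among groups such as $A_7$ and $L_2(q)$). The Sylow restriction argument above is what avoids a case-by-case analysis of these cohomology rings.
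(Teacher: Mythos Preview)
Your proof is correct and follows essentially the paper's strategy: Case~3 via \cref{prop:odd-order-normal-sg} and \cref{prop:no-rigid-dihedral}, and Cases~1 and~2 via \cref{cor:smallf22} after computing the $d_2$ differential from the explicit cohomology rings. The only real difference is in Case~1, where the paper instead uses naturality of the James spectral sequence along the surjection $H_2(D_{2^n};\Z/2)\twoheadrightarrow H_2(\pi;\Z/2)$ (coming from the odd-index transfer) to reduce to $E^\infty_{2,2}(\xi(D_{2^n},0))=0$, while you argue dually via injectivity of Sylow restriction on cohomology to show $\alpha^2\neq 0$; your explicit invocation of \cref{lem:ter-ks} to pass from $E^3_{2,2}$ to $E^\infty_{2,2}$ for $w\neq 0$ is a detail the paper leaves implicit.
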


\begin{proof}
    The group $D_{2^n}$ does not satisfy stable rigidity by \cref{prop:no-rigid-dihedral}. Hence if $\pi$ is a semi-direct product $P\rtimes D_{2^n}$ it does not satisfy stable rigidity by \cref{prop:odd-order-normal-sg}.

    We now show that $\pi$ satisfies stable rigidity if it is not a a semi-direct product $P\rtimes D_{2^n}$. By \cref{lemma:dihedral}, in this case $H_2(\pi;\Z/2)$ has dimension $1$ or $2$.

    We first consider the case where the dimension is $1$. By \cref{cor:smallf22}, it remains to show that $E^\infty_{2,2}(\xi(\pi,0))=0$. Since $H_2(D_{2^n};\Z/2)\to H_2(\pi;\Z/2)$ is surjective, this follows from $E^\infty_{2,2}(\xi(D_{2^n},0))=0$, which can be seen as follows.

    From the cohomology ring given above it can easily be verified that the Steenrod square $\Sq^2\colon H^2(D_{2^n};\Z/2)\to H^4(D_{2^n};\Z/2)$ is injective, so the dual $\Sq_2=d_2\colon H^4(D_{2^n};\Z/2)$ is surjective and hence already $E^3_{2,2}(\xi(D_{2^n},0))=0$.

    Next we consider the case where the dimension is $2$. Then the cohomology ring is given by $H^*(\pi;\Z/2) \cong \Z/2[a,b,c]/(ac)$ where $\deg(a)=1$, $\deg(b)=2$ and $\deg(c)=3$. Hence 
    \[
    \Sq_2\colon H_4(\pi;\Z/2)\to H_2(\pi;\Z/2)
    \]
    is surjective and $E^3_{2,2}(\xi(\pi,0))=0$. By \cref{cor:smallf22}, it remains to show that $E^\infty_{2,2}(\xi(\pi,w))\cong \Z/2$ for $w\neq 0$. It is easy to see that for all choices of $w\neq 0$, the map 
    \[
    H^2(\pi;\Z/2)\to H^4(\pi;\Z/2), \quad x\mapsto \Sq^2(x)+x\smile w
    \]
    is non-trivial. Hence the second differential is non-trivial and $E^\infty_{2,2}(\xi(\pi,w))\cong \Z/2$ as needed.
    \end{proof}
    
\subsubsection{Semi-dihedral groups}

We define the \textit{semi-dihedral group of order} $2^k$ to be
\[SD_{2^k}=\left<x,y| x^{2^{k-1}}=y^2=1, yxy= x^{2^{k-2}-1}\right>\]
with $k\geq 4$.

\begin{lemma} \label{lemma:semidihedralkappa2}
The map $\kappa_2^s\colon H_2(SD_{2^{n}};\Z/2)\to L_4^s(\Z[SD_{2^{n}}])$ is trivial for all $n \ge 4$. The same conclusion holds for $\kappa_2^h$.
\end{lemma}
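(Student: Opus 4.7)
The plan is to follow the template established in the proof of \cref{lem:dihedralKappa2} verbatim, since the argument there used no special features of the dihedral group beyond its being a finite $2$-group whose elementary abelian subgroups are well understood. Concretely, I will invoke Quillen's detection lemma \cite[Lemma 4.6]{Qu71} to conclude that $H_2(SD_{2^n};\Z/2)$ is generated by the images $i_{E*} H_2(E;\Z/2)$ as $E$ ranges over the elementary abelian $2$-subgroups of $SD_{2^n}$. The main point is then that $\kappa_2^s$ vanishes on each such $E$, and I can propagate this vanishing to $SD_{2^n}$.

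For the vanishing on elementary abelian subgroups, any $E\cong (C_2)^k$ has all cyclic factors of order $2$, so the obstruction condition in \cref{lem:abelian0} (that some $n_i$ or $n_j$ is divisible by $4$) is vacuous. Hence $\ker(\kappa_2^s(E))=H_2(E;\Z/2)$, i.e.\ $\kappa_2^s$ is identically zero on $E$. Applying \cref{prop:transfer-l-theory} exactly as in the dihedral proof (or equivalently using naturality of $\kappa_2^s$ with respect to the inclusion $E\hookrightarrow SD_{2^n}$) then yields $\kappa_2^s(SD_{2^n})=0$ on each generator of $H_2(SD_{2^n};\Z/2)$, and hence on the whole group. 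The statement for $\kappa_2^h$ is immediate from the factorisation $\kappa_2^h\colon H_2(\pi;\Z/2)\xrightarrow{\kappa_2^s} L_4^s(\Z\pi)\to L_4^h(\Z\pi)$ recalled in \cref{ss:surgery-classical}.

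I expect no real obstacle, as the argument is formal and parallels the dihedral case line-for-line. If a sanity check is desired, one can confirm directly from the presentation $SD_{2^n}=\langle x,y\mid x^{2^{n-1}}=y^2=1,\,yxy=x^{2^{n-2}-1}\rangle$ that the involutions are $x^{2^{n-2}}$ together with $x^{2j}y$ for $j=0,\dots,2^{n-2}-1$, and that two non-central involutions $x^{2j}y$, $x^{2k}y$ commute only when $2^{n-3}\mid j-k$, in which case their product is central. Consequently every elementary abelian subgroup is either $\langle x^{2^{n-2}}\rangle\cong C_2$ or of the form $\langle x^{2^{n-2}}, x^{2j}y\rangle\cong C_2\times C_2$, both of which are covered by \cref{lem:abelian0} as noted above. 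This verifies that Quillen's lemma applies to subgroups on which $\kappa_2^s$ is known to vanish, completing the proof.
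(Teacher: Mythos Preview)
There is a genuine gap: the claim that $H_2(SD_{2^n};\Z/2)$ is generated by the images of its elementary abelian subgroups is false. Quillen's theorem detects mod-$2$ cohomology by elementary abelian subgroups only \emph{modulo nilpotents}. For $D_{2^n}$ the ring $H^*(D_{2^n};\Z/2)\cong\Z/2[x,y,u]/(xy)$ is reduced, so detection is on the nose and the dual homology statement holds; this is the special feature of the dihedral case that your argument implicitly uses. For $SD_{2^n}$, however, the cohomology ring (recalled in the proof of \cref{prop:non-rigid-semi}) has the relation $x^3=0$, so the degree-$2$ class $x^2=xy$ is nilpotent and therefore restricts to zero on every elementary abelian subgroup $E$ (since $H^*(E;\Z/2)$ is a polynomial ring, hence reduced). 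Dually, the image of $\bigoplus_E H_2(E;\Z/2)$ in $H_2(SD_{2^n};\Z/2)\cong(\Z/2)^2$ lies in the annihilator of $x^2$, a proper subgroup, and your argument proves $\kappa_2^s=0$ only on that one-dimensional subspace.

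The paper circumvents this by detecting with larger subgroups: by \cite{Chin1995}, the restriction $H^*(SD_{2^n};\Z/2)\to H^*(D_8;\Z/2)\oplus H^*(Q_8;\Z/2)$ is injective for suitable subgroups $D_8,Q_8\leq SD_{2^n}$, so dually $H_2(D_8;\Z/2)\oplus H_2(Q_8;\Z/2)\to H_2(SD_{2^n};\Z/2)$ is surjective. Since $\kappa_2^s$ vanishes on $D_8$ and $Q_8$ by \cref{lem:dihedralKappa2} and \cref{lemma:quaternionkappa2}, naturality finishes the proof. The $Q_8$ subgroup, whose own cohomology is not reduced, is precisely what picks up the class that elementary abelian subgroups miss.
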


\begin{proof}
    According to \cite{Chin1995} the $\Z/2$ cohomology of $SD_{2^k}$ is detected by $D_8$ and $Q_8$, the dihedral and quaternionic groups of order $8$, i.e.\ $H^*(SD_{2^k};\Z/2)\hookrightarrow H^*(D_8;\Z/2)\oplus H^*(Q_8;\Z/2)$ is injective. It follows by duality that $H_2(D_8;\Z/2)\oplus H_2(Q_8;\Z/2)\twoheadrightarrow H_2(SD_{2^k};\Z/2)$ is surjective. So any element in $H_2(SD_{2^k};\Z/2)$ comes from $H_2(D_8;\Z/2)\oplus H_2(Q_8;\Z/2)$. We know that $\kappa^s_2$ vanishes on dihedral and quaternionic groups. By naturality, $\kappa^s_2$ vanishes on $SD_{2^k}$.
\end{proof}

\begin{proposition}
\label{prop:non-rigid-semi}
The group $SD_{2^k}$ does not satisfy stable rigidity.    
\end{proposition}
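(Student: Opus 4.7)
The plan is to verify the non-rigidity criterion of \cref{cor:stable-rigidity}. Since $\kappa_2^h$ vanishes on $\pi := SD_{2^k}$ by \cref{lemma:semidihedralkappa2}, we have $\ker(\kappa_2^h) = H_2(\pi;\Z/2)$, so it suffices to produce $w \in H^2(\pi;\Z/2)$ and $\alpha \in H_2(\pi;\Z/2)$ with $w\cap\alpha = 0$ such that $\alpha$ has nontrivial image in $E^\infty_{2,2}(\xi(\pi, w))$.

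Mirroring \cref{prop:no-rigid-dihedral}, I would take $w = x^2 + y^2$, where $x, y \in H^1(\pi; \Z/2)$ are the two generators dual to $\pi^{\mathrm{ab}} \cong (\Z/2)^2$. Using the mod-$2$ cohomology ring of $\pi$ (as recorded in \cite{Adem-Milgram}) together with the relation $xy = 0$, one verifies that both $x^2$ and $y^2$ lie in the kernel of $\Sq^2 + (-\cup w)\colon H^2(\pi;\Z/2) \to H^4(\pi;\Z/2)$. Dually, the classes $(x^2)^*$ and $(y^2)^*$ survive the differential $d_2$, and $\alpha := (x^2)^* + (y^2)^* \in E^3_{2,2}(\xi(\pi, w))$ satisfies $w \cap \alpha = 1 + 1 = 0$, which is exactly the smooth-realisability condition required by \cref{cor:stable-rigidity}.

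The final step is to show that $d_3\colon E^3_{5,0}(\xi(\pi,w)) \to E^3_{2,2}(\xi(\pi, w))$ does not hit $\alpha$, so that $\alpha$ persists to $E^\infty_{2,2}$. Following the dihedral argument, I would identify $E^3_{5,0}(\xi(\pi, w))$ as generated by images under cyclic subgroup inclusions $\iota\colon C\hookrightarrow\pi$, and then use naturality of the James spectral sequence to reduce to $d_3$-computations in $\xi(C, \iota^* w)$. The cyclic subgroups that enter here are the maximal cyclic $\langle x\rangle \cong C_{2^{k-1}}$, on which $\iota^* w = 0$ (because $\iota^*x = t$ satisfies $t^2 = 0$ in the cohomology of a cyclic $2$-group of order $\geq 4$), and the involution subgroups $\langle yx^{2l}\rangle \cong \Z/2$, on which $\iota^* w = z^2$. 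In either case $d_3$ vanishes: for $C_{2^{k-1}}$ because $E^3_{2,2}(\xi(C_{2^{k-1}}, 0)) = 0$ (the second differential from $H^2 = \Z/2\langle s\rangle$ is the Steenrod square $s \mapsto s^2$, an isomorphism), and for $\Z/2$ because \cref{lem:ter-ks} forces $E^\infty_{2,2}(\xi(\Z/2, z^2)) \neq 0$, which combined with $H_2(\Z/2;\Z/2) = \Z/2$ implies $d_3 = 0$ in the $\Z/2$ spectral sequence.

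The main technical obstacle is establishing the subgroup-generation statement for $E^3_{5,0}(\xi(\pi, w))$, the analogue of \cref{lem:reductin-dihedral} for $SD_{2^k}$. This amounts to identifying $H_5(SD_{2^k};\Z)$ and computing its image in $H_5(SD_{2^k};\Z/2)$ via a Bockstein analysis, using the known integral and mod-$2$ cohomology rings of $SD_{2^k}$. The computation is in the same spirit as the dihedral case but more intricate, given the richer subgroup structure (two conjugacy classes of non-central involutions) and the more complicated ring relations for $SD_{2^k}$.
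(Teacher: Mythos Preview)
Your proposal has a concrete error and an unnecessary detour. First, the relation $xy=0$ does \emph{not} hold in $H^*(SD_{2^k};\Z/2)$; that is the dihedral relation. For the semi-dihedral group the ring (due to Evens) is
\[
H^*(SD_{2^k};\Z/2)\cong \Z/2[x,y,P,u_3]/(x^2=xy,\ x^3=0,\ u_3x=0,\ u_3^2=P(x^2+y^2)),
\]
with $|x|=|y|=1$, $|u_3|=3$, $|P|=4$. Your $d_2$ verification, as written, relies on the wrong relation; one can redo it with the correct ring, but the argument needs to be reworked.

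More importantly, the paper bypasses the entire direct $d_3$ analysis you flag as the ``main technical obstacle.'' Instead it uses the surjective group homomorphism $f\colon SD_{2^k}\to D_{2^{k-1}}$, $x\mapsto x'$, $y\mapsto y'$, and naturality of the James spectral sequence. Taking $w\in H^2(D_{2^{k-1}};\Z/2)$ as in \cref{prop:no-rigid-dihedral} and noting $f^*(x')=x$, $f^*(y')=y$, one sees that $f_*\colon E^3_{2,2}(SD_{2^k},f^*w)\to E^3_{2,2}(D_{2^{k-1}},w)$ is surjective onto $(\Z/2)^2$; since $H_2(SD_{2^k};\Z/2)\cong(\Z/2)^2$, the map is an isomorphism. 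As $E^\infty_{2,2}(D_{2^{k-1}},w)=(\Z/2)^2$ was already established, naturality of $d_3$ forces $E^\infty_{2,2}(SD_{2^k},f^*w)=(\Z/2)^2$ as well, and the criterion of \cref{cor:stable-rigidity} applies immediately. This avoids any analogue of \cref{lem:reductin-dihedral} for $SD_{2^k}$, which you correctly identify as nontrivial, and makes the proof a short corollary of the dihedral case rather than a parallel computation.
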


\begin{proof}
    For this proof we will use the presentation $D_{2n}=\langle x',y'\mid x'^n=y'^2=1,y'x'y'=x'^{-1}\rangle$ for the dihedral group. Define a map $f\colon SD_{2^k}\to D_{2^{k-1}}$ by sending $x\mapsto x'$ and $y\to y'$. By \cite[Lemma 3, p. 70]{Evens1985}, we have
\[H^*(SD_{2^k};\Z/2)\cong  \Z/2[x,y,P,u_3]/(x^2=xy, x^3=0, u_3x=0, u_3^2=P(x^2+y^2)),\]
where $\deg(x)=\deg(y)=1$, $\deg(u_3)=3$ and $\deg(P)=4$. 
Let $w\in H^2(D_{2^{k-1}};\Z/2)$ as in the proof of \cref{prop:no-rigid-dihedral}.
Since $f^*(x')=x$ and $f^*(y')=y$ and $E_{2,2}^3(D_{2^{k-1}},w)$ is generated by $(x^2)^*$ and $(y^2)^*$, the induced map
\[f_*\colon E_{2,2}^3(SD_{2^k},f^*w)\to E_{2,2}^3(D_{2^{k-1}},w)\]
is surjective. Since $E_{2,2}^\infty(D_{2^{k-1}},w)\cong (\Z/2)^2$, also $E_{2,2}^\infty(SD_{2^{k}},f^*w)=H_2(SD_{2^{k}};\Z/2)\cong (\Z/2)^2$.

Hence there exists $x\in \ker(\kappa_2^h)=H_2(SD_{2^k};\Z/2)$ such that $0\neq [x]\in E^\infty_{2,2}(\xi(SD_{2^k},0,f^*w))$ and $f^*w\frown x= 0$. Hence $SD_{2^k}$ does not satisfy stable rigidity by \cref{prop:stable-he-vs-homeo}.
\end{proof}

\begin{lemma}
\label{lemma:semidihedral}
	Let $\pi$ be a finite group such that its $2$-Sylow subgroup $SD_{2^n}$ is semi-dihedral of order $2^n \ge 16$. Then precisely one of the following cases holds, where ccls refers to the conjugacy classes of group elements.
\FloatBarrier
\bgroup
\def\arraystretch{1.5}
\normalfont
\begin{table}[h]
\begin{center}
\begin{tabular}{|c|c|c|c|c|}
      \hline
                 & $\substack{\text{$\#$ ccls of} \\ \text{involutions}}$ & $\substack{\text{$\#$ ccls of order} \\ \text{four elements}}$ & Is $\pi/O(\pi) \cong SD_{2^n}$?  & $H^2(\pi;\FF_2)$ \\
       \hline
      Case $1$   & 1 & 1 & No & $0$ \\
      \hline
      Case $2$   & 2 & 1 & No  & $\FF_2$ \\
      \hline
      Case $3$   & 1 & 2 & No & $\FF_2$ \\
            \hline

      Case $4$   & 2 & 2 & Yes  & $\FF_2^2$ \\
      \hline
    \end{tabular}
\end{center}
\end{table}
\egroup
\FloatBarrier
\end{lemma}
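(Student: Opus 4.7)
The plan is to mirror the proof of \cref{lemma:dihedral} with its semi-dihedral analogue. The key ingredients are (a) a classification of the fusion patterns in $SD_{2^n}$ that can arise from a finite ambient group $\pi$, and (b) a cohomological computation depending only on this fusion pattern.

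First, I would record the following elementary fact about $SD_{2^n}$ itself for $n \ge 4$: it has exactly two conjugacy classes of involutions, namely $\{x^{2^{n-2}}\}$ (central) and the class of $y$, and exactly two conjugacy classes of elements of order four, namely the class of $x^{2^{n-3}}$ inside $\langle x \rangle$ and the class of $yx$. This is a direct conjugation computation with the presentation using $yxy=x^{2^{n-2}-1}$; in particular, conjugation by $x$ sends $yx^i$ to $yx^{i+2^{n-2}-2}$, so translates through all non-central involutions (for $i$ even) or all relevant order-four elements (for $i$ odd). Since every involution and every element of order four in $\pi$ is $\pi$-conjugate into the Sylow $2$-subgroup, the $\pi$-conjugacy classes of such elements are obtained from these four $SD_{2^n}$-classes by collapsing under additional fusion. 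This gives the four cases in the table.

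Second, to identify $\pi/O(\pi)$ in each case, I would invoke the Alperin--Brauer--Gorenstein classification of finite groups with semi-dihedral Sylow $2$-subgroup. This determines $\pi/O(\pi)$ from its $2$-fusion pattern at the Sylow subgroup and identifies it as either $SD_{2^n}$ itself or one of a finite list of almost simple groups (of types $L_3(q)$, $U_3(q)$, or $M_{11}$). Case 4 corresponds exactly to no additional fusion, i.e.\ $\pi/O(\pi) \cong SD_{2^n}$; the other cases correspond to ambient groups whose $2$-local structure forces one or both collapses, so $\pi/O(\pi) \not\cong SD_{2^n}$.

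Third, for the cohomology $H^2(\pi;\FF_2)$ I would apply the Cartan--Eilenberg stable-elements theorem: since $SD_{2^n}$ is the Sylow $2$-subgroup of $\pi$, $H^*(\pi;\FF_2)$ is the subring of $H^*(SD_{2^n};\FF_2)$ consisting of classes stable under the fusion system of $\pi$. From the presentation of $H^*(SD_{2^n};\FF_2)$ recalled in the proof of \cref{prop:non-rigid-semi}, the degree-2 part has basis $\{xy,\, y^2\}$ (using $x^2 = xy$) and is two-dimensional. A direct check on restrictions to the cyclic subgroups generated by representatives of the two involution classes and the two order-four classes shows that fusion of the two involution classes and fusion of the two order-four classes each impose one independent linear condition on $H^2$. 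The dimensions $\FF_2^2, \FF_2, \FF_2, 0$ in Cases 4, 3, 2, 1 respectively follow. The main obstacle will be verifying the independence of these two conditions; this is handled by choosing the restriction targets carefully (central $\Z/2$ versus $\langle y \rangle$ for the involutions, and the two distinct cyclic order-four subgroups for the order-four elements) and tracking the action of the relevant fusion isomorphisms on $H^2$ of those cyclic subgroups via naturality and the explicit formulae for $xy$ and $y^2$.
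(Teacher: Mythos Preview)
Your overall strategy---use Alperin--Brauer--Gorenstein for the case split and Cartan--Eilenberg stable elements for the cohomology---is sound and is essentially what lies behind the references the paper cites (the paper itself simply invokes \cite{Be22} for the fusion-pattern classification, \cite{Ch95} for the cohomology computation, and \cite[Proposition~1]{ABG70} to match the two labellings). However, your proposed verification of the stable-element conditions has a genuine gap.

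You claim that fusion of the two order-four classes imposes a linear condition on $H^2(SD_{2^n};\FF_2)$ that can be read off from restriction to the cyclic subgroups $\langle x^{2^{n-3}}\rangle$ and $\langle yx\rangle$. This fails: every class in $H^2(SD_{2^n};\FF_2)$ is a product of degree-one classes (the ring generators $P$ and $u_3$ live in degrees $4$ and $3$), and for any $\Z/4$-subgroup the restriction of a degree-one class squares to zero in $H^2(\Z/4;\FF_2)\cong\FF_2[s]\otimes\Lambda(t)$. Concretely, $x$ and $y$ both restrict to zero on $\langle x^{2^{n-3}}\rangle$ (since $x^{2^{n-3}}$ dies in the abelianisation $(\Z/2)^2$ for $n\ge 4$), and on $\langle yx\rangle$ they each restrict to the degree-one generator $t$ with $t^2=0$. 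Thus restriction to cyclic order-four subgroups imposes \emph{no} condition on $H^2$, and your method cannot distinguish Case~2 from Case~4.

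The correct mechanism is the one the paper's references use: by Alperin's fusion theorem the essential subgroups of $SD_{2^n}$ are a fixed $C_2^2$ and a fixed $Q_8$, and the four fusion patterns correspond to whether $3$ divides $|\Aut_\pi(C_2^2)|$ and/or $|\Aut_\pi(Q_8)|$. The stability condition at $Q_8$ is that the restriction to $H^2(Q_8;\FF_2)\cong\FF_2^2$ be invariant under the order-three outer automorphism; a direct computation shows this automorphism has no nonzero fixed vector on $H^2(Q_8;\FF_2)$, so the condition is that the restriction vanish. That is the missing linear condition in Cases~1 and~2. (Your argument for the $C_2^2$ side happens to work, because there the cyclic restrictions do detect $S_3$-invariance in degree two.) So either compute with $C_2^2$ and $Q_8$ directly, or---as the paper does---cite \cite{Ch95}.
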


To prove this, we use the Alperin–Brauer–Gorenstein theorem \cite{ABG70} which classifies the finite simple groups whose Sylow $2$-subgroup is $SD_{2^n}$.
The classification is according to the possible fusion systems on $SD_{2^n}$.
That is, for a subgroup $\pi' \le \pi$, we will be concerned with the value of $\Aut_{\pi}(\pi') := N_{\pi}(\pi')/C_{\pi}(\pi')$ where $N$ and $C$ are the normaliser and centraliser respectively.

\begin{proof}
Similarly to the case of dihedral groups, results in \cite[Section 3.5]{Be22} show that finite groups $\pi$ whose Sylow $2$-subgroups are semi-dihedral split into Cases 3.5.1, 3.5.2, 3.5.3, 3.5.4 according to the number of conjugacy classes of involutions and order four elements. This corresponds to Cases 1-4 above. The information given there similarly determines $\pi/O(\pi)$.
In \cite{Ch95}, such groups are split into Cases (1)-(4) and $H^2(\pi;\FF_2)$ is determined for the groups in each class. It therefore remains to find how Cases (1)-(4) match up with Cases 1-4 in our table. 
In \cite{Ch95}, the cases are determined according to whether $3 \mid |\Aut_{\pi}(C_2^2)|$ and whether $3 \mid |\Aut_{\pi}(Q_8)|$ for naturally defined subgroups $C_2^2, Q_8 \le SD_{2^n} \le \pi$. 
By \cite[Proposition 1]{ABG70}, we have $|\Aut_{\pi}(C_2^2)|=|\Aut_{\pi}(Q_8)|=6$ in Case 1, $|\Aut_{\pi}(C_2^2)|=2$, $|\Aut_{\pi}(Q_8)|=6$ in Case 2, $|\Aut_{\pi}(C_2^2)|=6$, $|\Aut_{\pi}(Q_8)|=2$ in Case 3, and $|\Aut_{\pi}(C_2^2)|=|\Aut_{\pi}(Q_8)|=2$ in Case 4.
Hence Cases 1-4 corresponds to Cases (1), (3), (2) and (4) in \cite{Ch95}. The result follows.
\end{proof}

\begin{proposition}
    Let $\pi$ be a finite group such that its $2$-Sylow subgroup $SD_{2^n}$ is semi-dihedral of order $2^n \ge 16$. Then $\pi$ satisfies stable rigidity if and only if $\pi$ is not a semi-direct product $P\rtimes SD_{2^n}$.
\end{proposition}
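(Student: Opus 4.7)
The plan is to mirror the proof of Proposition~\ref{prop:no-rigid-dihedral} in the dihedral case.

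For the forward direction, suppose $\pi \cong P \rtimes SD_{2^n}$ with $P$ of odd order. The split extension $1 \to P \to \pi \to SD_{2^n} \to 1$ satisfies the hypotheses of Proposition~\ref{prop:odd-order-normal-sg}, reducing stable rigidity of $\pi$ to stable rigidity of $SD_{2^n}$. The latter fails by Proposition~\ref{prop:non-rigid-semi}, so $\pi$ is not stably rigid.

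For the reverse direction, suppose $\pi$ is not of the form $P \rtimes SD_{2^n}$. By Lemma~\ref{lemma:semidihedral}, $\pi$ lies in Case~1, 2 or 3, so $\dim_{\FF_2} H^2(\pi;\FF_2) \le 1$, and hence $\dim_{\FF_2} H_2(\pi;\FF_2) \le 1$ by universal coefficients. In Case~1 we have $H_2(\pi;\FF_2)=0$, so $E^\infty_{2,2}(\xi(\pi,w))=0$ for every $w$ and stable rigidity follows from Corollary~\ref{cor:stable-rigidity}. In Cases~2 and~3 we have $H_2(\pi;\FF_2) \cong \FF_2$ and a unique nonzero class $w \in H^2(\pi;\FF_2)$; I plan to apply Corollary~\ref{cor:smallf22}. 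The condition $E^\infty_{2,2}(\xi(\pi,w)) \cong \FF_2$ for the nonzero $w$ is automatic from Lemma~\ref{lem:ter-ks}: the map $w\colon H_2(\pi;\FF_2) \to \FF_2$ factors through $E^\infty_{2,2}(\xi(\pi,w))$, and being a nonzero linear form on a one-dimensional space it forces the subquotient $E^\infty_{2,2}(\xi(\pi,w))$ of $H_2(\pi;\FF_2)$ to equal $\FF_2$.

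It remains to verify that $E^\infty_{2,2}(\xi(\pi,0)) = 0$ in Cases~2 and~3. Since $\dim H_2(\pi;\FF_2) = 1$, this reduces to showing that the $d_2$-differential $H_4(\pi;\FF_2) \to H_2(\pi;\FF_2)$ hits the generator, which is dual to the nonvanishing of $\Sq^2\colon H^2(\pi;\FF_2) \to H^4(\pi;\FF_2)$ on the generator $w$, i.e.\ that $w \cup w \neq 0$. This is the main obstacle: it requires a direct cohomological computation for the groups arising in Cases~2 and~3 of the Alperin--Brauer--Gorenstein classification. I would carry it out using the explicit mod-$2$ cohomology rings computed in \cite{Ch95}, or alternatively by restricting along the Sylow inclusion $SD_{2^n} \hookrightarrow \pi$ to reduce to a computation in $H^*(SD_{2^n};\FF_2)$, identifying the image of $w$ in $H^2(SD_{2^n};\FF_2) = \langle x^2, y^2\rangle$ from the fusion pattern recorded in Lemma~\ref{lemma:semidihedral}.
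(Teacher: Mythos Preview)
Your overall structure matches the paper's proof: the forward direction via \cref{prop:odd-order-normal-sg} and \cref{prop:non-rigid-semi}, and the reverse direction via \cref{lemma:semidihedral} and \cref{cor:smallf22}, are exactly what the paper does. The divergence is in the one step you flag as the main obstacle, namely showing $E^\infty_{2,2}(\xi(\pi,0))=0$.

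The paper does \emph{not} attempt to show $w\cup w\neq 0$ in $H^4(\pi;\FF_2)$. Instead it argues uniformly for all of Cases~1--3 by reducing to the Sylow subgroup: since $H_2(SD_{2^n};\FF_2)\to H_2(\pi;\FF_2)$ is surjective, naturality gives a surjection $E^\infty_{2,2}(\xi(SD_{2^n},0))\twoheadrightarrow E^\infty_{2,2}(\xi(\pi,0))$, so it suffices to show the left side vanishes. Here the $d_2$-only approach \emph{fails}: in $H^*(SD_{2^n};\FF_2)\cong\FF_2[x,y,P,u_3]/(x^2{=}xy,\,x^3{=}0,\ldots)$ one has $\Sq^2(x^2)=x^4=0$, so $E^3_{2,2}(\xi(SD_{2^n},0))\cong\FF_2$. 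The paper then kills this via $d_3$: there is a map $\psi\colon Q_8\to SD_{2^n}$ with $\psi^*(x^2)=\psi^*(y^2)=\tilde y^2$, so $E^3_{2,2}(\xi(Q_8,0))$ surjects onto $E^3_{2,2}(\xi(SD_{2^n},0))$, and Teichner's computation \cite[Proposition~4.2.1]{teichnerthesis} shows $d_3$ is an isomorphism for $Q_8$; naturality then forces $d_3$ to be surjective for $SD_{2^n}$.

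Your proposed route is not obviously wrong, but it is genuinely incomplete and carries a risk you have not addressed: your phrase ``this reduces to'' should be ``it would suffice that'', since $E^\infty_{2,2}=0$ can also come from $d_3$. More seriously, your fallback of restricting to $SD_{2^n}$ to check $w^2\neq 0$ runs straight into the fact that $x^4=0$ there; if the stable class in $H^2(SD_{2^n};\FF_2)$ corresponding to Case~2 or Case~3 happens to be $x^2$, your $d_2$-only argument collapses and you are forced back to the $d_3$/$Q_8$ mechanism the paper uses. The paper's approach sidesteps any case-by-case identification of $w$ inside $H^2(SD_{2^n})$.
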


\begin{proof}
    The group $SD_{2^n}$ does not satisfy stable rigidity by \cref{prop:non-rigid-semi}. Hence if $\pi$ is a semi-direct product $P\rtimes SD_{2^n}$ it does not satisfy stable rigidity by \cref{prop:odd-order-normal-sg}.

    We now show that $\pi$ satisfies stable rigidity if it is not a a semi-direct product $P\rtimes SD_{2^n}$. By \cref{lemma:semidihedral}, in this case $H_2(\pi;\Z/2)$ has dimension $0$ or $1$.
    By \cref{cor:smallf22}, it remains to show that $E^\infty_{2,2}(\xi(\pi,0))=0$. Since $H_2(SD_{2^n};\Z/2)\to H_2(\pi;\Z/2)$ is surjective, this follows from $E^\infty_{2,2}(\xi(SD_{2^n},0))=0$, which can be seen as follows.

    Recall that by \cite{Evens1985} the cohomology ring of $SD_{2^n}$ is
    \[H^*(SD_{2^n};\Z/2)\cong \Z/2[x,y,P,u_3]/(x^2=xy,x^3=0,u_3x=0,u_3^2=P(x^2+y^2)).\]
    Hence $\Sq^2(x^2)=0$ and $\Sq^2(y^2)=y^4\neq 0$. By \cite{Evens1985}, there is a map $\psi\colon Q_8\to SD_{2^n}$ such that $\psi^*(x^2)=\psi^*(y^2)=\wt{y}^2$, where $H^*(Q_8;\Z/2)\cong \Z/2[\wt{x},\wt{y},\wt{P}]/(\wt{x}^2+\wt{x}\wt{y}+\wt{y}^2, \wt{x}^2\wt{y}+\wt{x}\wt{y}^2)$. It follows that $E^3_{2,2}(\xi(Q_8,0))$ surjects onto $E^3_{2,2}(\xi(SD_{2^n},0))\cong \Z/2$. By \cite{teichnerthesis}*{Proposition~4.2.1}, the differential $d_3\colon E^3_{5,0}(\xi(Q_8,0))\to E^3_{2,2}(\xi(Q_8,0))$ is an isomorphism. By naturality of the James spectral sequence, the differential $d_3\colon E^3_{5,0}(\xi(SD_{2^n},0))\to E^3_{2,2}(\xi(SD_{2^n},0))$ is surjective. Hence $E^\infty_{2,2}(\xi(SD_{2^n},0))=0$ as needed.
\end{proof}

\subsection{Modular maximal-cyclic groups}
\label{ss:modular-maximal}

Next note that, for all $k \ge 4$, there are four isomorphism classes of extensions of $C_2$ with kernel $C_{2^{k-1}}$ (see, for example, \cite[Chapter 5, Exercise 17]{DF04}). The groups are $C_{2^{k-1}} \times C_2$, $D_{2^k}$, $SD_{2^k}$ or the following group:
\[
M_k(2)=\left<x,y| x^{2^{k-1}}=y^2=1, yxy=x^{2^{k-2}+1}\right>
\]
which is defined for $k\geq 4$. This is known as \emph{modular maximal cyclic group of order} $2^k$. 

\begin{proposition}
\label{prop:semi-dihedral2}
    The group $M_k(2)$ satisfies stable rigidity.
\end{proposition}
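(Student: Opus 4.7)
The plan is to apply \cref{cor:smallf22}: it suffices to show $E^\infty_{2,2}(\xi(M_k(2),0)) = 0$ and $E^\infty_{2,2}(\xi(M_k(2),w)) \cong \Z/2$ for every nonzero $w \in H^2(M_k(2);\Z/2)$.

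First I would compute the mod-$2$ cohomology ring of $M_k(2)$ together with the action of $\Sq^2$ in low degrees. The efficient route is via the central extension
\[
1 \longrightarrow \langle x^2\rangle \cong C_{2^{k-2}} \longrightarrow M_k(2) \longrightarrow C_2 \times C_2 \longrightarrow 1,
\]
which exists because the relation $yxy = x^{2^{k-2}+1}$ forces $yx^2y = x^{2^{k-1}+2} = x^2$ (so $x^2$ is central) and becomes trivial modulo $\langle x^2\rangle$ since $x^{2^{k-2}+1}\equiv x \bmod x^2$. Writing $H^*(C_2 \times C_2;\Z/2) = \Z/2[a,b]$ with $a,b$ dual to the images of $x, y$, a direct $2$-cocycle computation using the section sending generators to $x,y$ identifies the mod-$2$ reduction of the extension class as $a^2 \in H^2(C_2 \times C_2;\Z/2)$. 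Feeding this into the Lyndon–Hochschild–Serre spectral sequence with fibre $C_{2^{k-2}}$, together with Kudo's transgression theorem, determines $H^*(M_k(2);\Z/2)$ and the $\Sq^2$ operation in the range required.

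With the cohomology in hand, I would then verify, using the formula $d_2 = (\Sq^2 + -\cup w)^*$ from \cref{sec:background}, that in the James spectral sequence the second differential $d_2 \colon H_4(M_k(2);\Z/2) \to H_2(M_k(2);\Z/2)$ is surjective when $w = 0$ and has cokernel of dimension exactly one when $w \ne 0$. For each nonzero $w$ one additionally has to check that this surviving class is not hit by $d_3 \colon E^3_{5,0}\to E^3_{2,2}$; this can be controlled by naturality against the inclusion of a suitable $C_2\times C_2$-subgroup and the known behaviour of $d_3$ on $C_2\times C_2$, exactly as in the arguments used in the proofs of \cref{prop:no-rigid-dihedral,prop:non-rigid-semi}.

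The main obstacle is the explicit cohomology calculation: running the LHS spectral sequence uniformly in $k \ge 4$ and tracking the $\Sq^2$ action on the resulting generators requires some care, since the extension class $a^2$ introduces nontrivial transgressions which in turn dictate the multiplicative relations in $H^*(M_k(2);\Z/2)$. Once this is in place, verifying the two rank conditions on $d_2$ and the vanishing of the relevant $d_3$ is routine linear algebra, closely parallel to the dihedral and semi-dihedral computations earlier in the section.
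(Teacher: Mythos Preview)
There is a genuine gap in your strategy: the hypothesis of \cref{cor:smallf22} is not satisfied by $M_k(2)$, and in particular your claim that $d_2$ is surjective for $w=0$ is false.

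Your own LHS computation shows why. The transgression $d_2(s)=a^2$ forces the inflation of $a^2$ to vanish in $H^*(M_k(2);\Z/2)$; writing $\bar a,\bar b\in H^1(M_k(2);\Z/2)$ for the inflations of $a,b$, one has $\bar a^2=0$. Since $H_2(M_k(2);\Z)=0$ (e.g.\ from the LHS for the metacyclic extension $1\to C_{2^{k-1}}\to M_k(2)\to C_2\to 1$), one finds $\dim_{\Z/2}H^2(M_k(2);\Z/2)=2$, with basis $\{\bar a\bar b,\bar b^2\}$. Then $\Sq^2(\bar a\bar b)=(\bar a\bar b)^2=\bar a^2\bar b^2=0$ while $\Sq^2(\bar b^2)=\bar b^4\ne 0$ (it restricts nontrivially to $\langle y\rangle\cong C_2$). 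So $\ker(\Sq^2\colon H^2\to H^4)=\langle\bar a\bar b\rangle$ and $d_2$ has cokernel $\Z/2$ for $w=0$, contrary to your plan. Worse, the class $(\bar a\bar b)^*$ that survives to $E^3_{2,2}(\xi(M_k(2),0))$ is precisely the one with $\kappa_2^h\ne 0$ (it maps under the quotient $M_k(2)\to C_{2^{k-2}}\times C_2$ to the K\"unneth cross term, on which $\kappa_2^h$ is nonzero since $4\mid 2^{k-2}$). Stable rigidity therefore does \emph{not} force this class to die at $E^\infty$, so there is no reason to expect $E^\infty_{2,2}(\xi(M_k(2),0))=0$, and \cref{cor:smallf22} is simply the wrong tool here.

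The paper's argument is quite different and avoids spectral sequence computations for $M_k(2)$ entirely. It first uses the quotient $f\colon M_k(2)\twoheadrightarrow C_{2^{k-2}}\times C_2$ to show that the homology class of the relation $yxy^{-1}=x^{2^{k-2}+1}$ has $\kappa_2^h\ne 0$, so $\ker(\kappa_2^h)$ is spanned by the classes of the power relations. It then observes that the inclusion $g\colon C_{2^{k-2}}\times C_2\hookrightarrow M_k(2)$, $a\mapsto x^2$, $b\mapsto y$, hits these classes, so $g$ induces a surjection on $\ker(\kappa_2^h)$. Since $C_{2^{k-2}}\times C_2$ is abelian and hence stably rigid by \cref{thm:abelian}, \cref{lem:surj-ker-kappa} gives the result. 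The key difference is that the paper works with $\ker(\kappa_2^h)$ directly rather than with all of $E^\infty_{2,2}$; the latter may well be larger, which is exactly what blocks your approach.
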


\begin{proof}
    There is a surjection $f\colon M_k(2) \to C_{2^{k-2}}\times C_2=\langle a,b\mid a^{2^{k-2}},b^2,[a,b]\rangle$ given by $x\mapsto a$ and $y\mapsto b$. Using the presentations as a start for a free resolution of $\Z$ as a $\Z[M_k(2)]$- and $\Z[C_{2^{k-2}}\times C_2]$-module respectively, we can consider the relations as generators for the second homology. Since $k\geq 4$, $\kappa_2^h$ is non-trivial for $C_{2^{k-2}}\times C_2$ as in the proof of \cref{lem:abelian1}. It follows that it is non-trivial on the homology class represented by $[a,b]$ and trivial on the classes represented by $a^{2^{k-2}}$ and $b^2$ since $\kappa_2^h$ vanishes on $C_2$ and on $C_{2^{k-2}}$. Under the map $f$ the relation $yxy=x^{2^{k-2}+1}$ corresponds to $[a,b]=a^{2^{k-2}}$. And thus the homology class represented by $yxy=x^{2^{k-2}+1}$ has non-trivial image under $\kappa_2^h$.

    Now consider the inclusion $g\colon C_{2^{k-2}}\times C_2=\langle a,b\mid a^{2^{k-2}},b^2,[a,b]\rangle\to M_k(2)$ given by $a\mapsto x^2$ and $b\mapsto y$. It maps the homology classes represented by $a^{2^{k-2}}$ and $b^2$ to those represented by $x^{2^{k-1}}$ and $y^2$, respectively. Thus $g$ induces a surjection on $\ker(\kappa_2^h)$. Since $C_{2^{k-2}}\times C_2$ satisfies stable rigidity, so does $M_k(2)$ by \cref{lem:surj-ker-kappa}.
\end{proof}

Combining \cref{prop:semi-dihedral2} with \cref{prop:oddindex} we obtain the following result.

\begin{corollary}
    Let $\pi$ be a finite group with $2$-Sylow subgroup $M_k(2)$, then $\pi$ satisfies stable rigidity.
\end{corollary}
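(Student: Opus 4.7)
The proof plan is essentially an immediate assembly of two prior results, so the only real work is to verify that the hypotheses line up.

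First, I would invoke \cref{prop:semi-dihedral2}, which establishes that the modular maximal-cyclic group $M_k(2)$ itself satisfies stable rigidity. This is the key group-specific input, and nothing further needs to be proved about $M_k(2)$.

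Second, I would observe that since $M_k(2)$ is by hypothesis a Sylow $2$-subgroup of the finite group $\pi$, Lagrange's theorem implies that the index $[\pi : M_k(2)]$ is odd. Thus $M_k(2) \le \pi$ is a subgroup of odd index satisfying stable rigidity, and \cref{prop:oddindex} applies directly to conclude that $\pi$ satisfies stable rigidity.

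There is no obstacle: the proof is a one-line citation-based argument combining \cref{prop:semi-dihedral2} and \cref{prop:oddindex}, exactly as in the sentence preceding the corollary statement. The only conceivable subtlety would be if $\pi$ were required to have the Sylow subgroup as a normal subgroup or something stronger, but \cref{prop:oddindex} only requires odd index, which is automatic for Sylow $2$-subgroups.
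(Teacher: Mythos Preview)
Your proposal is correct and matches the paper's own proof exactly: the corollary is stated immediately after the sentence ``Combining \cref{prop:semi-dihedral2} with \cref{prop:oddindex} we obtain the following result,'' and your argument is precisely that combination.
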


\begin{lemma} \label{lemma:kappa_SD'}
    For $M_k(2)$, we have $\ker(\kappa_2^s)=\ker(\kappa_2^h)$.
\end{lemma}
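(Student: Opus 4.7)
The plan is to combine the analysis of $\ker(\kappa_2^h)$ for $M_k(2)$ already carried out in the proof of \cref{prop:semi-dihedral2} with the fact from \cref{lem:abelian0} that $\ker(\kappa_2^s) = \ker(\kappa_2^h)$ for finitely generated abelian groups, together with naturality of $\kappa_2^s$.

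First I would recall the two key facts. The inclusion $g\colon C_{2^{k-2}} \times C_2 \hookrightarrow M_k(2)$ defined in the proof of \cref{prop:semi-dihedral2} by $a \mapsto x^2$, $b \mapsto y$ was shown to induce a surjection
\[
g_* \colon \ker(\kappa_2^h(C_{2^{k-2}} \times C_2)) \twoheadrightarrow \ker(\kappa_2^h(M_k(2))).
\]
On the other hand, since $C_{2^{k-2}} \times C_2$ is a finitely generated abelian group, \cref{lem:abelian0} gives $\ker(\kappa_2^s(C_{2^{k-2}} \times C_2)) = \ker(\kappa_2^h(C_{2^{k-2}} \times C_2))$.

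Now I would combine these. Let $z \in \ker(\kappa_2^h(M_k(2)))$. By surjectivity of $g_*$, there exists $y \in \ker(\kappa_2^h(C_{2^{k-2}} \times C_2))$ with $g_*(y) = z$. By the equality for abelian groups, $y \in \ker(\kappa_2^s(C_{2^{k-2}} \times C_2))$. By naturality of $\kappa_2^s$ (induced by $g$ and the associated map on $L$-theory $g_*^L \colon L_4^s(\Z[C_{2^{k-2}} \times C_2]) \to L_4^s(\Z[M_k(2)])$) we get
\[
\kappa_2^s(M_k(2))(z) = \kappa_2^s(M_k(2))(g_*(y)) = g_*^L(\kappa_2^s(C_{2^{k-2}} \times C_2)(y)) = 0,
\]
so $z \in \ker(\kappa_2^s(M_k(2)))$. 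Combined with the standing inclusion $\ker(\kappa_2^s) \subseteq \ker(\kappa_2^h)$ (which follows from the factorization of $\kappa_2^h$ through $\kappa_2^s$ recorded after \cref{prop:davis-theta}), this gives the required equality.

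There is no real obstacle here: the whole argument reduces to assembling results already proved in the paper. The only point worth checking carefully is that the chain of identifications used in the proof of \cref{prop:semi-dihedral2} genuinely shows $\ker(\kappa_2^h(M_k(2))) = g_*(\ker(\kappa_2^h(C_{2^{k-2}} \times C_2)))$ rather than some weaker statement, but this is exactly how that argument is phrased.
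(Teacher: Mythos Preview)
Your proof is correct and uses the same core ingredients as the paper's proof, but the packaging differs slightly. The paper argues generator-by-generator: it shows directly that the homology classes coming from the two \emph{cyclic} relations $x^{2^{k-1}}=1$ and $y^2=1$ lie in $\ker(\kappa_2^s)$ (since $\kappa_2^s$ vanishes on cyclic groups), and then invokes the analysis from \cref{prop:semi-dihedral2} to see that the remaining generator is not in $\ker(\kappa_2^h)$; the inclusion $\ker(\kappa_2^s)\subseteq\ker(\kappa_2^h)$ then forces equality. You instead take the surjection $g_*\colon \ker(\kappa_2^h(C_{2^{k-2}}\times C_2))\twoheadrightarrow \ker(\kappa_2^h(M_k(2)))$ from \cref{prop:semi-dihedral2} as a black box and combine it with \cref{lem:abelian0} and naturality of $\kappa_2^s$. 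Your route is a bit more streamlined, since it avoids reopening the generator analysis and uses a single abelian subgroup rather than two cyclic ones; the paper's route is slightly more explicit about which classes lie in which kernel. Either way, the substance is the same.
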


\begin{proof}
    Since $\kappa_2^s$ is trivial for cyclic groups by \cite[Theorem 4.1(b)]{taylor-williams}, $\kappa_2^s$ is trivial by the homology classes represented by the relations $x^{2^{k-1}}=1$ and $y^2=1$. As in the proof of \cref{prop:semi-dihedral2}, $\kappa_2^h$ is non-trivial on the homology class represented by $yxy=x^{2^{k-2}+1}$. Since $\ker(\kappa_2^s)\leq \ker(\kappa_2^h)$, the claim follows.
\end{proof}

\section{Comparison of homotopy and simple homotopy up to stabilisations} \label{s:he-vs-she}

We will now prove the following theorem from the introduction

\begin{reptheorem}{cor:he-vs-she}
There exist closed oriented smooth $4$-manifolds $M$, $M'$ that are homotopy equivalent but not simple homotopy equivalent up to stabilisations if and only if there exists a finitely presented group $\pi$ with $\ker(\kappa_2^s)\neq\ker(\kappa_2^h) \subseteq H_2(\pi;\Z/2)$.
\end{reptheorem}

\begin{proof}
	If $\ker(\kappa_2^s)=\ker(\kappa^h_2)$ for a group $G$, then by \cref{thmx:main-table} manifolds with fundamental group $G$ are homotopy equivalent up to stabilisation if and only if they are simple homotopy equivalent up to stabilisation. 
	
	Now assume there exists $x\in \ker(\kappa^h_2(G))$ which is not in the kernel of $\kappa_2^s(G)$. Let $w\in H^2(G;\Z/2)\cong H^2(G;\Z/2)^*$ be such that $w(x)\neq 0$ and $w(y)=0$ for every $y\in \ker(\kappa_2^s(G))$. Let $\pi:=G*G$ and $\xi:=\xi(\pi,(w,w))$.

 We will first show that $(x,x)\in H_2(\pi;\Z/2)$ is in the kernel of $\kappa^h_2(\pi)$. It suffices to show that $(x,0)$ and $(0,x)$ are in the kernel. The map $H_2(G;\Z/2)\to H_2(\pi;\Z/2)$ induced by the inclusion $G\to G*G=\pi$ sends $x$ to $(x,0)$. Since $x\in \ker(\kappa_2^h(G))$, $(x,0)$ is in the kernel of $\kappa_2^h(\pi)$ by naturality of $\kappa_2^h$. Similarly, $(0,x)$ is in the kernel as claimed.
 
    Next we show that $(x,x)\neq 0\in E^\infty_{2,2}/[\ker \kappa_2^s(\pi)]$. Since $x\notin\ker(\kappa_2^s(G))$, $(x,x)\notin \ker(\kappa_2^s(\pi))$ by naturality of $\kappa^s_2$. By naturality of the James spectral sequence $(a,a')\in H_4(\pi;\Z/2)\cong H_4(G;\Z/2)\oplus H_4(G;\Z/2)$ maps to $(d_2^G(a),d_2^G(a'))\in H_2(\pi;\Z/2)\cong H_2(G;\Z/2)\oplus H_2(G;\Z/2)$ under the differential $d_2^\pi$, where $d_2^G$ is the differential in $E_{p,q}^2(\xi(G,w))$. Similarly, $d_3^\pi=(d_3^G,d_3^G)$. Hence it suffices to show that $x$ is not contained in $\kappa_2^s(\pi)+\im d_2,d_3$. By \cref{lem:ter-ks} and the definition of $w$, $\kappa_2^s(\pi)+\im d_2,d_3$ is contained in $\ker w$, while $x\notin \ker w$. Hence $(x,x)\neq 0\in E^\infty_{2,2}/[\ker \kappa_2^s(\pi)]$ as claimed.
     
 Let $M$ be a manifold with normal $1$-type $\xi$, some $1$-smoothing $\wt{\nu}$, $\sigma(M)=0$, $\pri(M)=\sec(M)=0$ and $\ter(M)=(x,x)$. Let $M'$ be a manifold with normal $1$-type $\xi$ that is $\xi$-nullbordant. Then $M'$ and $M$ are not stably diffeomorphic since $\ter(M)\neq0$ while $\ter(M')=0$. According to \cref{rm:aboutXiActionOnOmega}, regardless of the choice of $1$-smoothings we have $\ter([M]-[M'])=(x,x)\notin [\ker(\kappa_2^s(\pi))]$. So they are also not simple homotopy equivalent after stabilisation by \cref{thmx:main-table}. However, they are homotopy equivalent after stabilisation since $\ter([M]-[M'])=(x,x)\in [\ker(\kappa^h_2(\pi))]$.
	
	 By \cref{lem:ter-ks}, $\ks(M')=0$ and also
    \[\ks(M)=(w,w)(x,x)=w(x)+w(x)=1+1=0.\] 
    Hence after some number of stabilisations, $M$, $M'$ become smoothable and there is a pair of smooth, homotopy equivalent manifolds that are not simple homotopy equivalent up to stabilisation.
\end{proof}

The following is the direct analogue of \cref{prop:oddindex}. The proof is essentially the same, but is repeated here for the convenience of the reader.

\begin{proposition} \label{prop:oddindex-k2}
Let $G \le \pi$ be a subgroup of odd index. If $\ker(\kappa_2^h(G)) =\ker(\kappa_2^s(G))$, then $\ker(\kappa_2^h(\pi)) =\ker(\kappa_2^s(\pi))$.
\end{proposition}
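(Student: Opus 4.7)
The plan is to adapt the transfer argument from \cref{prop:oddindex} essentially verbatim. Since $\kappa_2^h$ factors through $\kappa_2^s$, the inclusion $\ker(\kappa_2^s(\pi)) \subseteq \ker(\kappa_2^h(\pi))$ holds automatically, so only the reverse inclusion needs proof. Let $i\colon G \hookrightarrow \pi$ denote the inclusion.

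First I would take $x \in \ker(\kappa_2^h(\pi))$ and form $y := \tr(x) \in H_2(G;\Z/2)$. Because $[\pi:G]$ is odd, the composition $i_* \circ \tr$ equals multiplication by $[\pi:G]$ on $H_2(\pi;\Z/2)$, which is the identity with $\Z/2$ coefficients; in particular $i_*(y) = x$. By \cref{prop:transfer-l-theory}, pushing $\kappa_2^h(\pi)(x) = 0$ down via the $L$-theory transfer $p^*$ yields
\[
\kappa_2^h(G)(y) \;=\; \kappa_2^h(G)(\tr(x)) \;=\; p^*(\kappa_2^h(\pi)(x)) \;=\; 0,
\]
so $y \in \ker(\kappa_2^h(G))$. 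By the hypothesis $\ker(\kappa_2^h(G)) = \ker(\kappa_2^s(G))$, we then have $\kappa_2^s(G)(y) = 0$.

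Next I would push this conclusion back up to $\pi$. For this step I would invoke naturality of $\kappa_2^s$ under the inclusion $i$, that is, the relation $\kappa_2^s(\pi) \circ i_* = i_* \circ \kappa_2^s(G)$, where on the right $i_*$ denotes induction of $L$-groups along $\Z G \to \Z\pi$. Applied to $y$, this identity gives
\[
\kappa_2^s(\pi)(x) \;=\; \kappa_2^s(\pi)(i_*(y)) \;=\; i_*(\kappa_2^s(G)(y)) \;=\; 0,
\]
so $x \in \ker(\kappa_2^s(\pi))$, as desired.

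The only non-formal step to justify is the naturality of $\kappa_2^s$ under the group inclusion $i$ (covariantly, via induction on $L$-groups), as opposed to the transfer naturality already provided by \cref{prop:transfer-l-theory}. This is standard, since $\kappa_2^s$ is defined from the surgery obstruction $\theta^s$, which is natural with respect to group homomorphisms; the paper itself implicitly uses this naturality, for instance in the discussion of the $\Aut(\xi)$-action in \cref{rm:aboutXiActionOnOmega}. Assuming this, the argument above is essentially mechanical, so I do not anticipate any real obstacle beyond checking that the covariant naturality is stated (or easily extracted) in a form sufficient for the displayed calculation.
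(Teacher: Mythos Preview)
Your proof is correct and follows essentially the same approach as the paper's own proof: transfer $x$ to $G$, use \cref{prop:transfer-l-theory} to get it into $\ker(\kappa_2^h(G)) = \ker(\kappa_2^s(G))$, then push back via covariant naturality of $\kappa_2^s$. You are in fact slightly more careful than the paper, which writes $\tr \circ i_*$ where it should have $i_* \circ \tr$, and you make the covariant naturality step explicit rather than leaving it implicit.
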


\begin{proof}
    Not that $\ker(\kappa_2^s(\pi))\leq \ker(\kappa_2^h(\pi))$. Hence it remains to show the other inclusion. Let $i\colon G\to\pi$ be the inclusion and let $x\in H_2(\pi;\Z/2)$ be in the kernel of $\kappa_2^h(\pi)$. Let $x':=\tr(x)\in H_2(G;\Z/2)$, where $\tr$ is the transfer map. Since $G$ has odd index, $\tr\circ i_*$ is the identity and thus $x'$ is a preimage of $x$. By \cref{prop:transfer-l-theory}, $\kappa_2^h(x')=p^*(\kappa^h_2(x))=0$. Hence $x'\in \ker(\kappa_2^h(G)) =\ker(\kappa_2^s(G))$. By naturality of $\kappa_2^s$, $x$ is in the kernel of $\kappa_2^s(\pi)$ as needed.
\end{proof}

We will now establish the following result which was stated previously in the introduction.

\begin{reptheorem}{prop:k_2-equal}
Let $\pi$ be a finitely generated abelian group or a finite group whose Sylow $2$-subgroup is abelian, basic, modular maximal-cyclic, or of order at most $16$.
Then $\ker(\kappa_2^s) = \ker(\kappa_2^h)$. In particular, closed, oriented, smooth $4$-manifolds $M$, $M'$ with fundamental group $\pi$ which are homotopy equivalent are simple homotopy equivalent up to stabilisations.
\end{reptheorem}

\begin{proof}
The case of finitely presented abelian groups is covered by \cref{lem:abelian0}. Now restrict to the case where $\pi$ is finite. By \cref{prop:oddindex-k2}, it suffices to restrict to finite $2$-groups.
If $\pi$ is quaternionic, dihedral or semi-dihedral, then $\ker(\kappa_2^s)=\ker(\kappa_2^h) = H_2(\pi;\Z/2)$ by \cref{lemma:quaternionkappa2,lem:dihedralKappa2,lemma:semidihedralkappa2}. If $\pi$ is $M_k(2)$, then $\ker(\kappa_2^s)=\ker(\kappa_2^h)$ by \cref{lemma:kappa_SD'}.
It remains to prove that $\ker(\kappa_2^s) = \ker(\kappa_2^h)$ for all finite $2$-groups $\pi$ with $|\pi| \le 16$.

From the tables in GroupNames \cite{groupnames}, there are exactly five groups of order 2, 4, 8 or 16 which are not abelian, quaternion, dihedral, semi-dihedral or modular maximal-cyclic. Hence it remains to show that $\ker(\kappa_2^h) = \ker(\kappa_2^s)$ for these groups. The groups, which all have order 16, are $D_8 \times C_2$, $Q_8 \times C_2$, $G_{(16,3)} = C_2^2 \rtimes C_4$, $G_{(16,4)} = C_4 \rtimes C_4$ and $G_{(16,13)} = C_4 \circ D_8$ where $G_{(n,m)}$ denotes the $m$th group of order $n$ in GAP's Small Groups library \cite{GAP4}.

Let $\pi$ be one of these five groups. Using the HAP package \cite{HAP} in GAP,  
we can compute $H_2(\pi;\Z/2)$. If $i \colon G \hookrightarrow \pi$ is a subgroup inclusion, then we can also compute the image of the induced map $i_* \colon H_2(H;\Z/2) \to H_2(\pi;\Z/2)$. By \cref{lem:abelian0,lemma:quaternionkappa2,lem:dihedralKappa2}, the groups $C_2$, $C_4$, $C_2^2$, $C_8$, $D_8$, $Q_8$ and $C_2^3$ have $\kappa_2^h=\kappa_2^s=0$, i.e. all groups of orders $2$, $4$ and $8$ except $C_2 \times C_4$.
Let $H_2(\pi;\Z/2)' \le H_2(\pi;\Z/2)$ denote the subgroup generated by the inclusions of any of these groups. We then obtain the following using GAP.
\FloatBarrier
\bgroup
\def\arraystretch{1.5}
\normalfont
\begin{table}[h]
\begin{center}
\begin{tabular}{|c|c|c|c|c|c|}
      \hline
      $\pi$ & $D_8 \times C_2$ & $Q_8 \times C_2$ & $G_{(16,3)}$ & $G_{(16,4)}$ & $G_{(16,13)}$ \\ 
       \hline
      $H_2(\pi;\Z/2)$ & $(\Z/2)^6$ & $(\Z/2)^5$ & $(\Z/2)^4$ & $(\Z/2)^3$ & $(\Z/2)^5$  \\
      \hline
      $H_2(\pi;\Z/2)'$ & $(\Z/2)^6$ & $(\Z/2)^5$ & $(\Z/2)^4$ & $(\Z/2)^2$ & $(\Z/2)^5$ \\
      \hline
    \end{tabular}
\end{center}
\end{table}
\egroup
\FloatBarrier
Since $\kappa_2^X = \kappa_2^X(\pi)$ is functorial in $\pi$ for each $X \in \{h,s\}$, it follows that $H_2(\pi;\Z/2)' = H_2(\pi;\Z/2)$ implies that $\kappa_2^h = \kappa_2^s = 0$. Hence the above table implies that, if $\pi \ne G_{(16,4)}$, then $\kappa_2^h = \kappa_2^s = 0$ and so $\ker(\kappa_2^h) = \ker(\kappa_2^s)$.

Let $\pi = G_{(16,4)}$. By \cite{groupnames}, we have $\pi = \langle x, y \mid x^4, y^4, yxy^{-1}x \rangle$. Consider the surjective group homomorphism 
\[
f \colon \pi = \langle x, y \mid x^4, y^4, yxy^{-1}x \rangle \twoheadrightarrow \langle x,y \mid x^2, y^2, [x,y] \rangle = C_2 \times C_4.
\]
The homology class in $H_2(\pi;\Z/2)$ represented by $yxy^{-1}x$ maps onto the class $c \in H_2(C_2 \times C_4;\Z/2)$ represented by $[x,y]$. By \cref{lem:abelian0}, we have that $\kappa_2^h(c) \ne 0$. It follows that $\kappa_2^h(\pi) \ne 0$. Since $H_2(\pi;\Z/2)' \le H_2(\pi;\Z/2)$ has index two, it follows that $\ker(\kappa_2^h) = \ker(\kappa_2^s) = (\Z/2)^2$.
\end{proof}

\section{Comparison of stable and unstable equivalence relations }
\label{sec:cancellation}

We will now prove the following Theorem from the introduction.

\begin{reptheorem}{thm:decomp}
Let $M$, $N$ be closed oriented $4$-manifolds with good fundamental groups. If $M$, $N$ are (simple) homotopy equivalent up to stabilisations, then there exists a $4$-manifold $N'$ which is (simple) homotopy equivalent to $N$ and stably homeomorphic to $M$.
\end{reptheorem}

The proof will make use of the following consequence of Freedman's disc theorem \cite{Fr83}.

\begin{lemma} \label{prop:stable-summand}
	Let $M$ be a closed oriented $4$-manifold whose fundamental group is good. If there exists a $4$-manifold $N$ such that $M \simeq N \# (S^2 \times S^2)$ (resp. $M \simeq_s N \# (S^2 \times S^2)$), then there exists a $4$-manifold $N'$ such that $M \homeo N' \# (S^2 \times S^2)$ and $N \simeq N'$ (resp. $N \simeq_s N'$).
\end{lemma}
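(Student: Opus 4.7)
The plan is to use the given homotopy equivalence $f \colon M \xrightarrow{\simeq} N \# (S^2 \times S^2)$ to locate, inside $M$, a pair of geometrically dual locally flat embedded $2$-spheres whose regular neighborhood splits off a topological $S^2 \times S^2$ summand. Removing this neighborhood and replacing it by a $4$-ball produces the desired manifold $N'$, after which $f$ is modified to descend to a (simple) homotopy equivalence $N' \simeq N$.

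Concretely, I would begin with the classes $\sigma, \tau \in \pi_2(N \# (S^2 \times S^2))$ represented by the factor spheres $S^2 \times \{\mathrm{pt}\}$ and $\{\mathrm{pt}\} \times S^2$. These form a hyperbolic pair for the equivariant intersection form: $\lambda(\sigma,\sigma) = \lambda(\tau,\tau) = 0$, $\lambda(\sigma,\tau) = 1$, and the equivariant self-intersections $\mu(\sigma) = \mu(\tau) = 0$ vanish. Pulling back along a homotopy inverse of $f$ yields classes $\alpha, \beta \in \pi_2(M)$ with identical algebraic intersection data. Since $\pi_1(M)$ is good, Freedman's disc embedding theorem—in the form that realizes algebraically hyperbolic pairs by geometrically dual locally flat embedded spheres, see \cite[Corollary 1.4]{PRT21}—yields locally flat embedded $2$-spheres $\Sigma, T \subset M$ representing $\alpha, \beta$, with trivial normal bundles and intersecting transversely in a single point. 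A regular neighborhood $\nu := \nu(\Sigma \cup T)$ is therefore homeomorphic to $(S^2 \times S^2) \setminus \mathrm{int}(D^4)$. Setting $N' := (M \setminus \mathrm{int}(\nu)) \cup_{S^3} D^4$ then produces a closed $4$-manifold with $M \homeo N' \# (S^2 \times S^2)$ by construction.

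To finish, I would produce the (simple) homotopy equivalence $g \colon N' \to N$ by a gluing argument. First, homotope $f$ so that it sends $\nu$ into a regular neighborhood $\nu'$ of $\sigma \cup \tau$ and the complement $M \setminus \mathrm{int}(\nu)$ into $(N \# (S^2 \times S^2)) \setminus \mathrm{int}(\nu')$; this is possible because $f_* \alpha = \sigma$ and $f_* \beta = \tau$, combined with essential uniqueness of tubular neighborhoods of locally flat submanifolds. Both neighborhoods are standard copies of $((S^2 \times S^2) \setminus \mathrm{int}(D^4), S^3)$, so the restriction $f|_\nu$ can be arranged to be a (simple) homotopy equivalence of pairs. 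The gluing lemma for (simple) homotopy equivalences then forces the complementary restriction to be a (simple) homotopy equivalence between $M \setminus \mathrm{int}(\nu)$ and $(N \# (S^2 \times S^2)) \setminus \mathrm{int}(\nu')$; capping both $S^3$-boundaries with $D^4$ yields $g$.

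The main obstacle is the simple homotopy version: verifying that $g$ has trivial Whitehead torsion whenever $f$ does. Under the Kirby–Siebenmann definition recalled in the paper, this reduces to a torsion-splitting argument across the separating $S^3 = \partial \nu$. Since $S^3$ is simply connected and the standard inclusion $(S^2 \times S^2) \setminus \mathrm{int}(D^4) \hookrightarrow S^2 \times S^2$ is a simple homotopy equivalence, the Whitehead torsion of $f$ splits compatibly along the decomposition, so that $g$ is simple whenever $f$ is. This splitting is essentially the content of \cite[Corollary 1.4]{PRT21}, which may be invoked directly to close the argument.
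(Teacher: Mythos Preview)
Your outline for producing the connected-sum splitting $M \homeo N' \# (S^2 \times S^2)$ is correct and essentially the same as the paper's: both invoke Freedman's embedding theorem (via \cite[Corollary~1.4]{PRT21}) to find an embedded $S^2 \times S^2 \setminus D^4$ inside $M$. The paper runs the homotopy equivalence in the other direction, $f\colon N \# (S^2\times S^2) \to M$, and homotopes the visible restriction $f|_{S^2\times S^2\setminus D^4}$ to an embedding; your version pulls the sphere classes back and embeds them. Either works.

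The genuine gap is in your construction of the (simple) homotopy equivalence $N' \simeq N$. You assert that $f$ can be homotoped to carry $M \setminus \operatorname{int}(\nu)$ into $(N\#(S^2\times S^2)) \setminus \operatorname{int}(\nu')$, but the justification offered (that $f_*\alpha=\sigma$, $f_*\beta=\tau$, plus uniqueness of tubular neighbourhoods) only controls $f|_\nu$, not $f$ on the complement. Splitting a homotopy equivalence along a separating $S^3$ is not automatic: even after you know $f_*$ takes the hyperbolic summand $\langle\alpha,\beta\rangle$ to $\langle\sigma,\tau\rangle$, one must still show it carries the specific complement $\pi_2(N'\setminus D^4)$ into $\pi_2(N\setminus D^4)$ and then handle higher obstructions (the inclusion $N\setminus D^4 \hookrightarrow N\#(S^2\times S^2)$ is not a $\pi_3$-isomorphism since $S^2\vee S^2$ has nontrivial $\pi_3$). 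None of this is addressed. Your closing sentence also misattributes the torsion-splitting to \cite[Corollary~1.4]{PRT21}, which concerns sphere embeddings, not Whitehead torsion.

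The paper sidesteps the splitting problem entirely: rather than restricting $f$ to complements, it \emph{attaches} $3$-cells along the two $S^2$-factors on each side, so that
\[
N \ \simeq \ \bigl(N\#(S^2\times S^2)\bigr)\cup_{p_1,p_2}(D^3\sqcup D^3)
\ \xrightarrow{\ f\cup\id\ } \
\bigl(N'\#(S^2\times S^2)\bigr)\cup_{p'_1,p'_2}(D^3\sqcup D^3)\ \simeq\ N',
\]
where the outer equivalences are obvious collapses and the middle map is well-defined because $f$ restricts to an embedding on $S^2\times S^2\setminus D^4$. For the simple case the paper invokes \cite[Theorem~1]{Wa66} to arrange the elementary expansions and collapses to fix $S^2\times S^2\setminus D^4$, so $f\cup\id$ is simple. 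This attaching-cells manoeuvre is the key idea your argument is missing.
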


\begin{proof}
    Let $f\colon N\#(S^2\times S^2)\to M$ be a homotopy equivalence. If follows from Freedman's disc theorem in the oriented case \cite[p.~647]{Fr83} (see also \cite[Theorem 2.3]{PRT21}) that $f|_{S^2\times S^2\setminus D^4}$ is homotopic to an embedding. Hence there exists a $4$-manifold $N'$ and a homeomorphism $g \colon M \to N' \# (S^2 \times S^2)$. It remains to show that we have $N\simeq N'$ (resp. $N \simeq_s N'$)
    
    By homotopy extension, we can assume that $f|_{S^2\times S^2\setminus D^4}$ is an embedding. Let $p_1,p_2\colon S^2\to N\#(S^2\times S^2)$ be the inclusions of the $S^2$ factors. Then there is a homotopy equivalence
    \[ F\colon N \xrightarrow[]{\simeq} (N\#(S^2\times S^2))\cup_{p_1,p_2}(D^3\sqcup D^3)\xrightarrow{f \cup \id}(N'\#(S^2\times S^2))\cup_{p_1',p_2'}(D^3\sqcup D^3) \xrightarrow[]{\simeq} N'\]
where $f \cup \id$ is the map extended to the identity on $D^3 \sqcup D^3$ and $p_i' = g \circ f \circ p_i$.

Next suppose that $f\colon N\#(S^2\times S^2)\to M$ is a simple homotopy equivalence, i.e. it is homotopic to a sequence of elementary expansions and contractions of cells. Then the homotopy equivalence $F$ above is simple. To see this, note that the first and last homotopy equivalences are just expansions and contractions of cells. The middle homotopy equivalence $f \cup \id$ is simple since, by \cite[Theorem 1]{Wa66}, the expansions and contractions which comprise $f$ can be taken to fix $S^2\times S^2\setminus D^4$ and so $f \cup \id$ is also homotopic to a sequence of expansions and contractions.
\end{proof}

\begin{proof}[Proof of \cref{thm:decomp}]
Suppose $N_0:=M \# a(S^2 \times S^2)$ and $N \# b(S^2 \times S^2)$ are (simple) homotopy equivalent, where $a, b \ge 0$. 
By applying \cref{prop:stable-summand} to $N_0$ and $N \# (b-1)(S^2 \times S^2)$, we get that there exists a $4$-manifold $N_1$ such that $N_0 \homeo N_1 \# (S^2 \times S^2)$ and $N_1$, $N \# (b-1)(S^2 \times S^2)$ are (simple) homotopy equivalent. By applying the same argument inductively, we obtain $4$-manifolds $N_i$ for $2 \le i \le b$ such that $N_{i-1} \homeo N_i \# (S^2 \times S^2)$ and $N_i$, $N \# (b-i)(S^2 \times S^2)$ are (simple) homotopy equivalent for all $1 \le i \le b$. Let $N' = N_b$. Then $M \# a(S^2 \times S^2) \homeo N' \# b(S^2 \times S^2)$ and $N'$, $N \# b(S^2 \times S^2)$ are (simple) homotopy equivalent.
\end{proof}

We will also give the following alternative proof using \cref{thm:main-top} (the topological version of \cref{thmx:main-table}). The argument still relies on work of Freedman, but it is applied in a different way.

\begin{proof}[Proof of \cref{thm:decomp} (alternative)]
 	If $N$ is totally non-spin, then $M$ and $N$ are already stably homeomorphic. Hence we can assume that $N$ is almost spin.
	
	Let $\xi\colon B\to \BSTop$ be the normal $1$-type of $N$. Since $M$ and $N$ are (simple) stably homotopy equivalent there exist $x\in \ker(\kappa_2^h)$ (resp. $x \in \ker(\kappa_2^s)$) and $\xi$-structures on $M$ and $N$, such that $[N]-[M]=[x]\in\Omega_4(\xi)$ by \cref{thm:main-top}. Let $x'\in H_2(N;\Z/2)$ be a lift of $x$. Since the surgery obstruction of $(0,x')\in \Z\oplus H_2(N;\Z/2)\cong \cN(N)$ is trivial by assumption on $x$, the fact that $\pi = \pi_1(N)$ is a good group implies that there exists a (simple) homotopy equivalence $f\colon N'\to N$ with $\eta(f)=(0,x')$ \citelist{\cite{Freedman82}*{Theorem~1.2}\cite{OPR21}*{Section~22.1.4}}. By \cref{{thm:davis-general}}, there exist $\xi(N,w_2)$-structures on $N'$ and $N$ which induce $\xi$-structures such that $[N']-[N]=[x]\in\Omega_4(\xi)$. Hence there are $\xi$-structures on $M$ and $N'$ such that $[M]-[N']=0$. It follows that $M$ and $N'$ are stably homeomorphic as needed.
\end{proof}

\subsection{Homotopy stable classes of $4$-manifolds}

Before moving on to our main application of \cref{thm:decomp} (\cref{thm:main-cancellation} below), we first give a reformulation of \cref{thm:decomp} in terms of manifold sets.
Let $M$ be a closed oriented $4$-manifold and define its \textit{homotopy stable class} to be
\[ \M^{\ttop,\st}_h(M) = \{\text{\normalfont $4$-manifolds } N \mid N \homeo^{\st} M \}/ \simeq. \]
This is the analogue of the set which was studied in the smooth category in \cite{CCPS22} (and denoted $\mathcal{S}^{\st}_h(M)$). 
We can similarly define versions of this set for (simple) homotopy equivalence
\[ \M^{h,\st}_h(M) = \{\text{\normalfont $4$-manifolds } N \mid N \simeq^{\st} M \}/ \simeq, \quad \M^{s,\st}_h(M) = \{\text{\normalfont $4$-manifolds } N \mid N \simeq_s^{\st} M \}/ \simeq. \]

The following is a direct consequence of \cref{thm:decomp}.

\begin{corollary} \label{cor:homotopy-stable-class}
 Let $M$ be a closed oriented $4$-manifold whose fundamental group $\pi$ is good. Then the natural inclusion maps
 $\M^{\ttop,\st}_h(M) \xrightarrow[]{\cong} \M^{s,\st}_h(M) \xrightarrow[]{\cong} \M^{h,\st}_h(M)
$
are bijections.
\end{corollary}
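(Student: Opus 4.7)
The plan is to observe that \cref{cor:homotopy-stable-class} is essentially a translation of \cref{thm:decomp} into the language of manifold sets, so almost all the work has already been done. I will first check that the maps are well-defined, then handle injectivity (which is formal) and finally surjectivity (which is the content of \cref{thm:decomp}).

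First I would note that both maps are well-defined because if $N \homeo^{\st} M$ then $N \simeq_s^{\st} M$, and if $N \simeq_s^{\st} M$ then $N \simeq^{\st} M$; and because each set is a quotient by ordinary homotopy equivalence, passing from a class in the smaller set to its image in the larger set makes sense. Next, for injectivity of either map, suppose $[N] = [N']$ in the larger set. By definition this means $N \simeq N'$, which is the same relation used to form the equivalence class in the smaller set, so $[N] = [N']$ there as well. Hence both maps are injective for purely formal reasons.

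The surjectivity of both maps is where \cref{thm:decomp} is used. Given a class $[N] \in \M^{h,\st}_h(M)$, by definition we have $N \simeq^{\st} M$ and since $\pi$ is good, \cref{thm:decomp} produces a $4$-manifold $N'$ with $N' \simeq N$ and $N' \homeo^{\st} M$. Then $[N'] \in \M^{\ttop,\st}_h(M)$ maps to $[N'] = [N] \in \M^{h,\st}_h(M)$, proving that the composite $\M^{\ttop,\st}_h(M) \to \M^{h,\st}_h(M)$ is surjective, which also forces both individual maps to be surjective. To get surjectivity of $\M^{\ttop,\st}_h(M) \to \M^{s,\st}_h(M)$ directly (so that the intermediate map is surjective on its own), I would apply the simple homotopy version of \cref{thm:decomp}: starting from $[N] \in \M^{s,\st}_h(M)$, i.e.\ $N \simeq_s^{\st} M$, the theorem yields $N'$ with $N' \simeq_s N$ (in particular $N' \simeq N$) and $N' \homeo^{\st} M$.

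There is essentially no obstacle in this argument beyond citing \cref{thm:decomp}; the only subtle point is remembering that all three sets are quotiented by the same relation $\simeq$ (rather than by the corresponding stable relation), which is exactly what makes injectivity trivial and surjectivity reduce cleanly to the existence statement of \cref{thm:decomp}.
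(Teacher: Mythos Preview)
Your argument is correct and matches the paper's approach: the paper simply states that the corollary is a direct consequence of \cref{thm:decomp} without giving any further proof, and you have spelled out exactly why that is so. The only minor point you might make explicit is that any $N$ with $N \simeq^{\st} M$ automatically has $\pi_1(N)\cong\pi$, so the hypothesis of \cref{thm:decomp} (that both manifolds have good fundamental group) is satisfied.
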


\begin{remark}
Since (simple) homotopy equivalence up to stabilisations extends in a reasonable manner to the setting of finite Poincar\'{e} $4$-complexes, we have that 
\[ \{\text{\normalfont $4$-manifolds } N \mid N \simeq^{\st} M \}/ \simeq  \,\, \hookrightarrow \,\, \{\text{\normalfont finite $\text{\normalfont PD}_4$-complexes } N \mid N \simeq^{\st} M \}/ \simeq. \]
Over good fundamental groups, \cref{cor:homotopy-stable-class} therefore gives an approach to studying $\M^{\st}_h(M)$ by first computing its analogue for finite Poincar\'{e} $4$-complexes, and then determining which of these complexes are homotopy equivalent to closed $4$-manifolds.
\end{remark}

\subsection{Cancellation problems for $4$-manifolds}

We will now use \cref{thm:decomp} to establish a relationship between the cancellation problems for homeomorphism and (simple) homotopy equivalence. 
We will begin with the following definition. 

\begin{definition}
    The \emph{topological genus} $g^{\ttop}(M)$ of a closed oriented $4$-manifold $M$ is the maximal number $k$ such that there exists a $4$-manifold $M_0$ with $M\homeo M_0\#k(S^2\times S^2)$. 
    
    Similarly, define the \emph{(simple) homotopy genus} $g^{h}(M)$ (resp. $g^{s}(M)$) to be the maximal number $k$ such there exists a $4$-manifold $M_0$ with $M\simeq M_0\#k(S^2\times S^2)$ (resp. $M\simeq_s M_0\#k(S^2\times S^2)$). 
\end{definition}

By \cref{prop:stable-summand}, we have that $g^h(M) = g^s(M) = g^{\ttop}(M)$ if $M$ is a closed oriented $4$-manifold whose fundamental group is good.

\begin{definition}
    The \emph{cancellation bound} $\cb(\pi)$ for a finitely presented group $\pi$ is the minimal number $k$ such that for every closed, oriented $4$-manifold $M$ of topological genus $k$ and with fundamental group $\pi$ stable homeomorphism implies homeomorphism. That is, 
    \[\cb(\pi) = \min\{ \, k  \,\, \mid \,\, \text{\normalfont\normalsize if $M \homeo^{\st} N$, $\pi_1(M) \cong \pi$ and $g^{\ttop}(M)=k$, then $M \homeo N$} \, \}.
    \]
    We set $\cb(\pi) = \infty$ if no such $k$ exists, i.e. if the above set is empty.

    The corresponding bounds with homeomorphism  replaced by (simple) homotopy equivalence will be denoted by $\cb^s(\pi)$ and $\cb^h(\pi)$ respectively.
\end{definition}

Note that the analogue in the smooth category was referred to as the cancellation genus in \cite[Problem 10]{BC+21}. 

\begin{remark}
It is not currently known whether there exists a finitely presented group $\pi$ for which $\cb(\pi) = \infty$, or even with $\cb(\pi) \ge 2$. 
The bounds $\cb^h(\pi)$ and $\cb^s(\pi)$ have analogues in the case of finite $2$-complexes with $- \# (S^2 \times S^2)$ replaced by $- \vee S^2$.
For each $k \ge 2$, examples where the bounds are at least $k$ were constructed in \cite[Theorem B]{Ni23} (and with bound $\infty$ in the non-finite case \cite[Theorems C \& 9.2]{Ni23}).

From these finite $2$-complexes, it is possible to construct closed oriented smooth $4$-manifolds by taking the boundary of thickening in $\R^5$. Using this, it is shown in \cite[Theorem 1.4]{Ni23} that there would exists a group $\pi$ with the manifold cancellation bound $\cb(\pi) \ge k$ for each $k$ provided certain stably free $\Z \pi$-modules are not free. Since the manifolds are smooth, such examples would also give that $\cb^{\diff}(\pi) \ge k$ for the version of the bound in the smooth category. 
\end{remark}

Our main result on cancellation is the following.

\begin{theorem} \label{thm:main-cancellation}
Let $\pi$ be a good group. Then $\cb^h(\pi) \le \cb^s(\pi) \le \cb(\pi)$.
\end{theorem}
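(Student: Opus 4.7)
The proof is a direct application of \cref{thm:decomp} combined with the observation that, over good fundamental groups, the three notions of genus coincide.

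First, I would establish that $g^h(M) = g^s(M) = g^{\ttop}(M)$ for any closed oriented $4$-manifold $M$ with good fundamental group $\pi$. The inequalities $g^h(M) \ge g^s(M) \ge g^{\ttop}(M)$ are immediate, since homeomorphism implies simple homotopy equivalence implies homotopy equivalence. For the reverse, suppose $M \simeq M_0 \# k(S^2 \times S^2)$ for some $k \ge 0$ and some $4$-manifold $M_0$. Applying \cref{prop:stable-summand} once gives $M \homeo N_1 \# (S^2\times S^2)$ with $N_1 \simeq M_0 \# (k-1)(S^2\times S^2)$; since $\pi_1(N_1) \cong \pi$ is also good, we may iterate to obtain $M \homeo N_k \# k(S^2 \times S^2)$ with $N_k \simeq M_0$, proving $g^{\ttop}(M) \ge g^h(M)$. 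The simple-homotopy version is identical.

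With this in hand, both inequalities follow the same short pattern. For $\cb^s(\pi) \le \cb(\pi)$ we may assume $\cb(\pi) = k$ is finite (otherwise there is nothing to prove). Let $M$ be any $4$-manifold with $\pi_1(M) = \pi$ and $g^s(M) = k$, and suppose $M \simeq_s^{\st} N$. By \cref{thm:decomp} there exists a $4$-manifold $N'$ with $N' \simeq_s N$ and $N' \homeo^{\st} M$. Since $\pi$ is good, the genus equality from the first step gives $g^{\ttop}(M) = g^s(M) = k = \cb(\pi)$, so the defining property of $\cb(\pi)$ forces $M \homeo N'$, whence $M \simeq_s N' \simeq_s N$. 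This shows cancellation at topological genus $k$ holds for $\simeq_s$, so $\cb^s(\pi) \le k$.

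The inequality $\cb^h(\pi) \le \cb^s(\pi)$ is proved in exactly the same way. Assuming $\cb^s(\pi) = k$ is finite, take $M$ with $\pi_1(M) = \pi$ and $g^h(M) = k$, and suppose $M \simeq^{\st} N$. \cref{thm:decomp} (homotopy version) provides $N' \simeq N$ with $N' \homeo^{\st} M$, so in particular $N' \simeq_s^{\st} M$. Since $g^s(M) = g^h(M) = k = \cb^s(\pi)$, the definition of $\cb^s(\pi)$ yields $M \simeq_s N'$, hence $M \simeq N' \simeq N$, completing the proof. The only point requiring care — so mild it barely counts as an obstacle — is the initial genus equality, without which the three cancellation bounds would be being compared at genuinely different values of $k$.
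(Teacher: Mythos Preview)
Your proof is correct and follows essentially the same approach as the paper. The paper also records the genus equality $g^h(M)=g^s(M)=g^{\ttop}(M)$ (as an immediate consequence of \cref{prop:stable-summand}) just before the theorem, and then in the proof itself carries out the same two steps you do: apply \cref{thm:decomp} to replace $N$ by a stably homeomorphic $N'$, and invoke the genus equality (phrased there as ``repeated application of \cref{prop:stable-summand}'') to put $M$ in the right genus range for the stronger cancellation bound to apply. Your organization, isolating the genus equality as a preliminary lemma, is if anything slightly cleaner.
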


\begin{proof}
We will start by proving that $\cb^s(\pi) \le \cb(\pi)$. If $\cb(\pi)=\infty$, then there is nothing to prove, so suppose $\cb(\pi) = k$ for some $k \ge 0$.
Let $M$, $N$ be closed oriented topological $4$-manifolds with fundamental groups $\pi$ such that $M \simeq_s^{\st} N$ and $M \simeq_s M_0 \# k(S^2 \times S^2)$. 
By \cref{thm:decomp}, there exists a $4$-manifold $N'$ such that $M \homeo^{\st} N'$ and $N \simeq_s N'$.
By repeated application of \cref{prop:stable-summand}, we get that there exists a $4$-manifold $M_0'$ such that $M \homeo M_0' \# k(S^2 \times S^2)$.
Since $\cb(\pi) = k$, this implies that $M \homeo N'$. Since $N \simeq_s N'$, this implies that $M \simeq_s N$. In particular, $\cb^s(\pi) \le k$ as required.

We can similarly obtain $\cb^h(\pi) \le \cb^s(\pi)$. Suppose $M \simeq^{\st} N$ and $M \simeq M_0 \# k(S^2 \times S^2)$. Firstly, \cref{thm:decomp} implies that there exists $N'$ such that $M \homeo^{\st} N'$ and $N' \simeq N$. In particular, we have $M \simeq_s^{\st} N'$. Secondly, \cref{prop:stable-summand} implies that there exists $M_0'$ such that $M \homeo M_0' \# k(S^2 \times S^2)$. In particular, we have $M \simeq_s M_0' \# k(S^2 \times S^2)$. So, if $\cb^s(\pi) = k$, then the same argument as before implies that $\cb^h(\pi) \le k$, as required.
\end{proof}

\begin{proof}[Proof of \cref{thm:cancellation}]
Let $\pi$ be a finite group. It is shown in \cite{hambleton-kreck93}*{Theorem~B} that $\cb(\pi) \le 1$. Since finite groups are good \cite[p.~649]{Fr83}, \cref{thm:main-cancellation} now implies that $\cb^h(\pi) \le 1$ and $\cb^s(\pi) \le 1$, as required.
\end{proof}

We can also similarly obtain the following. Recall that a finitely presented group $\pi$ is called \textit{polycyclic-by-finite} if it has a subnormal series whose factors are finite or infinite cyclic $C_\infty$. The minimal number of copies of $C_\infty$ which arise in such a series is called the \textit{Hirsch length} of $\pi$ and is denoted by $H(\pi)$.

\begin{corollary}
    Let $\pi$ be a polycyclic-by-finite group with Hirsch length $H(\pi)$. Then 
    \[ \cb^h(\pi) \le \cb^s(\pi) \le H(\pi)+3.\]
In particular, let $M$, $N$ be closed oriented topological $4$-manifolds with the same Euler characteristic and polycyclic-by-finite fundamental group $\pi$ which are (simple) homotopy equivalent up to stabilisations. If $M\simeq M_0\# k(S^2 \times S^2)$ for a $4$-manifold $M_0$ where $k = H(\pi)+3$, then $M$, $N$ are (simple) homotopy equivalent.
\end{corollary}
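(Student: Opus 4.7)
The plan is to adapt the argument of \cref{thm:cancellation}, replacing the Hambleton--Kreck cancellation theorem for finite groups \cite[Theorem B]{hambleton-kreck93} by a polycyclic-by-finite analogue, and then invoking \cref{thm:main-cancellation} to pass from topological to (simple) homotopy cancellation.

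Two ingredients are needed. First, polycyclic-by-finite groups are good in the sense of Freedman--Quinn, so that \cref{thm:main-cancellation} applies to them. This follows from the fact that polycyclic-by-finite groups are elementary amenable and that all elementary amenable groups are good. Second, one needs a topological cancellation bound
\[
\cb(\pi) \le H(\pi) + 3
\]
for polycyclic-by-finite $\pi$, generalising the bound $\cb(\pi) \le 1$ for finite groups used in the proof of \cref{thm:cancellation}. The linear dependence on $H(\pi)$ reflects the fact that equivariant cancellation for hermitian forms over $\Z\pi$ requires a stable range that grows linearly in the Hirsch length (for instance via Bass-type stability for $\Z\pi$-modules); the additive constant $+3$ absorbs the usual stabilisation needed to control signature, Kirby--Siebenmann invariant and Euler characteristic parity in the surgery-theoretic reduction.

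Granting these two inputs, \cref{thm:main-cancellation} yields
\[
\cb^h(\pi) \le \cb^s(\pi) \le \cb(\pi) \le H(\pi)+3.
\]
For the concrete cancellation statement, the hypothesis $M \simeq M_0 \# k(S^2\times S^2)$ with $k = H(\pi)+3$ forces $g^s(M) \ge \cb^s(\pi)$ (and $g^h(M) \ge \cb^h(\pi)$). The assumption $\chi(M)=\chi(N)$ combined with $M \simeq_s^{\st} N$ (resp. $M \simeq^{\st} N$) then implies $M \simeq_s N$ (resp. $M \simeq N$) directly from the definitions of $\cb^s(\pi)$ and $\cb^h(\pi)$.

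The main obstacle is establishing the sharp inequality $\cb(\pi) \le H(\pi)+3$: this requires an explicit topological cancellation theorem for closed oriented $4$-manifolds with polycyclic-by-finite fundamental group, of a form that interacts with Euler characteristic in the same way that Hambleton--Kreck's theorem does for finite $\pi$. If the literature only supplies a weaker bound of the form $H(\pi) + c$ for some constant $c > 3$, the same argument still goes through and yields $\cb^h(\pi) \le \cb^s(\pi) \le H(\pi) + c$, with the cancellation conclusion restated with $k = H(\pi) + c$.
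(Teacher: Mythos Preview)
Your approach is the same as the paper's: polycyclic-by-finite groups are good, there is a topological cancellation bound $\cb(\pi)\le H(\pi)+3$, and \cref{thm:main-cancellation} transfers this to the (simple) homotopy bounds. The only shortcoming is that you do not cite the source of the bound $\cb(\pi)\le H(\pi)+3$ and instead offer a heuristic justification with hedging at the end; the paper simply invokes Crowley--Sixt \cite[Theorem~1.1]{CS} for exactly this inequality, so no further argument (and no weaker constant) is needed. With that citation in place your proof is complete and matches the paper's.
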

    
\begin{proof}
It follows from \cite[Theorem 1.1]{CS} that $c(\pi) \le H(\pi)+3$. The result now follows from \cref{thm:main-cancellation} since polycyclic-by-finite groups are good \cite[p.~649]{Fr83}.
\end{proof}

\def\MR#1{}
\bibliographystyle{amsalpha}
\bibliography{classification}
\end{document}